\documentclass[12pt,reqno]{elsarticle}

\makeatletter
\def\ps@pprintTitle{%
 \let\@oddhead\@empty
 \let\@evenhead\@empty
 \def\@oddfoot{}%
 \let\@evenfoot\@oddfoot}
\makeatother

\usepackage{amsmath,amsthm,amscd,wrapfig,amsfonts,amssymb}
\usepackage{enumitem,comment,mathabx}
\usepackage{booktabs,multirow}

\usepackage{url,parskip,caption}
\usepackage[top=22mm, bottom=22mm, left=22mm, right=22mm]{geometry}

\newtheorem{corollary}{Corollary}
\newtheorem{theorem}{Theorem}
\newtheorem{lemma}{Lemma}
\newtheorem{remark}{Remark}
\newtheorem{definition}{Defintion}

\usepackage{palatino}

\newdimen\CdotAxis
\newcommand*{\CdotAux}[3]{%
  {%
    \settoheight\CdotAxis{$#2\vcenter{}$}%
    \sbox0{%
      \raisebox\CdotAxis{%
        \scalebox{#1}{%
          \raisebox{-\CdotAxis}{%
            $\mathsurround=0pt #2#3$%
          }%
        }%
      }%
    }%
    \dp0=0pt %
    \sbox2{$#2\bullet$}%
    \ifdim\ht2<\ht0 %
      \ht0=\ht2 %
    \fi
    \sbox2{$\mathsurround=0pt #2#3$}%
    \hbox to \wd2{\hss\usebox{0}\hss}%
  }%
}

\DeclareMathOperator{\sign}{sign}
\DeclareMathOperator{\erf}{erf}

\providecommand{\Czero}{\ensuremath{C_0\left(\mathbb{R}\right)}}

\providecommand{\Lp}[1]{\ensuremath{L^{#1}\left(\mathbb{R}\right)}}
\providecommand{\C}[1]{\ensuremath{C^{#1}\left(\mathbb{R}\right)}}

\providecommand{\OpT}[1]{\ensuremath{T\left[\hskip .1em{#1}\hskip .1em \right]}}

\providecommand{\OpWb}[1]{\ensuremath{W_b\left[\hskip .1em{#1}\hskip .1em \right]}}
\providecommand{\OpWTb}[1]{\ensuremath{\widetilde{W}_b\left[\hskip .1em{#1}\hskip .1em\right]}}

\providecommand{\OpR}[1]{\ensuremath{R\left[\hskip .1em{#1}\hskip .1em \right]}}

\providecommand{\OpTb}[1]{\ensuremath{T_b\left[\hskip .1em{#1}\hskip .1em \right]}}

\providecommand{\OpS}[1]{\ensuremath{S\left[{\hskip .1em {#1}\hskip .1em} \right]}}

\providecommand{\Sr}{S(\mathbb{R})}

\providecommand{\e}[1]{\ensuremath{\times 10^{#1}}}


\makeatletter
\g@addto@macro\normalsize{%
  \setlength\abovedisplayskip{.4em}
  \setlength\belowdisplayskip{.4em}
  \setlength\abovedisplayshortskip{.4em}
  \setlength\belowdisplayshortskip{.4em}
}

\begin{document}

\begin{frontmatter}
\begin{abstract}
We observe that solutions of a large class of highly oscillatory second order linear 
ordinary differential equations can be approximated using nonoscillatory 
phase functions.  In addition, we describe numerical experiments which 
illustrate important implications of this fact.  For example,
that many special functions of great interest --- such as the Bessel functions
$J_\nu$ and $Y_\nu$ ---  can be evaluated accurately 
using a number of operations which is $O(1)$ in the order $\nu$.
The present paper is devoted to the development of an  analytical  apparatus.  
Numerical aspects of this work will be reported at a later date.

\end{abstract}

\begin{keyword}
Special functions \sep 
ordinary differential equations \sep
phase functions
\end{keyword}

\title
{
On the existence of nonoscillatory phase functions 
for second order ordinary differential equations 
in the high-frequency regime
}

\author[vr]{Zhu Heitman}
\author[jb]{James Bremer\corref{cor1}}
\ead{bremer@math.ucdavis.edu}
\author[vr]{Vladimir Rokhlin}

\cortext[cor1]{Corresponding author}

\address[vr]{Department of Computer Science, Yale University}
\address[jb]{Department of Mathematics, University of California, Davis}

\end{frontmatter}




\begin{section}{Introduction} 

Given a differential equation
\begin{equation}
y''(t) + \lambda^2 q(t) y(t)  = 0\ \ \ \mbox{for all}\ \  0\leq t \leq 1,
\label{introduction:original_equation}
\end{equation}
where $\lambda$ is a real number and $q:[0,1]\to\mathbb{R}$ is smooth and strictly positive,
a sufficiently smooth $\alpha:[0,1]\to\mathbb{R}$ is a phase function
for (\ref{introduction:original_equation})
if the pair of functions $u,v$ defined by the formulas
\begin{equation}
u(t) = \frac{\cos(\alpha(t)) }{\left|\alpha'(t)\right|^{1/2}}
\label{introduction:u}
\end{equation}
and
\begin{equation}
v(t) = \frac{\sin(\alpha(t)) }{\left|\alpha'(t)\right|^{1/2}}
\label{introduction:v}
\end{equation}
form a basis in the space of solutions of (\ref{introduction:original_equation}).
Phase functions have been extensively studied: they were first introduced in \cite{Kummer},
play a key role in the  theory of global transformations
of ordinary differential equations  \cite{Boruvka,Neuman}, and are
an important element in the theory of special functions
\cite{Spigler-Vianello,Goldstein-Thaler,NISTHandbook,Andrews-Askey-Roy}.


Despite this long history, an important property of phase functions appears to have been
overlooked.  Specifically, that when the function $q$ is nonoscillatory, solutions
of the equation (\ref{introduction:original_equation}) can be accurately represented
using a nonoscillatory phase function.  


This is somewhat surprising since $\alpha$
is a phase function for (\ref{introduction:original_equation}) if and only
if it satisfies the third order nonlinear ordinary differential equation
\begin{equation}
\left(\alpha'(t)\right)^2 = \lambda^2q(t) - \frac{1}{2}\frac{\alpha'''(t)}{\alpha'(t)}
+ \frac{3}{4} \left(\frac{\alpha''(t)}{\alpha'(t)}\right)^2
\ \ \ \mbox{for all}\ \ \ 0\leq t \leq 1.
\label{introduction:kummers_equation}
\end{equation}
The equation (\ref{introduction:kummers_equation}) was introduced
in \cite{Kummer}, and  and we will refer to it as Kummer's equation.
The form of (\ref{introduction:kummers_equation}) and the appearance
of $\lambda$ in it
suggests 
that its solutions will be oscillatory  --- and most of them are.
However, Bessel's equation 
\begin{equation}
y''(t) + \left(1-\frac{\lambda^2-1/4}{t^2} \right) y(t) = 0
\ \ \ \mbox{for all}\ \ 0 < t < \infty
\label{introduction:bessel}
\end{equation}
furnishes a nontrivial example of an equation which admits a nonoscillatory phase function
regardless of the value of $\lambda$.   
If we define $u,v$ by the formulas
\begin{equation}
u(t) = \sqrt{\frac{\pi t}{2}} J_\lambda(t)
\end{equation}
and
\begin{equation}
v(t) = \sqrt{\frac{\pi t}{2}} Y_\lambda(t),
\end{equation}
where $J_\lambda$ and $Y_\lambda$ denote  the Bessel functions of the first and second
kinds of order $\lambda$, and let $\alpha$ be defined by the relations
(\ref{introduction:u}),(\ref{introduction:v}), then
\begin{equation}
\alpha'(t) =  \frac { 2 }{\pi t}\frac{1}{J_\lambda^2(t) + Y_\lambda^2(t)}.
\label{introduction:bessel_prime}
\end{equation}
It can be easily verified that 
(\ref{introduction:bessel_prime}) is a nonoscillatory.
The existence of this nonoscillatory
phase function for Bessel's equation is the basis of several
methods for the evaluation of Bessel functions of large orders
and for the computation of their zeros
\cite{Goldstein-Thaler,Rokhlin-Bremer-Others,Spigler-Vianello2}.  

The general situation is not quite so favorable: there  need not 
exist a nonoscillatory function $\alpha$ such that
(\ref{introduction:u}) and (\ref{introduction:v}) are exact
solutions of (\ref{introduction:original_equation}).
However,  assuming that $q$ is nonoscillatory and $\lambda$ is sufficiently large,
there exists a nonoscillatory function $\alpha$ such that
(\ref{introduction:u}), (\ref{introduction:v}) approximate
solutions of (\ref{introduction:original_equation}) with spectral
accuracy (i.e., the approximation errors decay exponentially with $\lambda$).



%


To see that this claim is plausible,  
we apply Newton's method for the solution of nonlinear equations to Kummer's 
equation (\ref{introduction:kummers_equation}).
In doing so, it will  be convenient to move the setting of our analysis from the interval 
$[0,1]$ to the real line so that we can use the Fourier transform
to quantity the notion of ``nonoscillatory.''
Suppose that the extension of $q$ to the real line 
is smooth and strictly positive, and such that $\log(q)$ is a smooth
function with rapidly decaying  Fourier transform.
Letting 
\begin{equation}
\left(\alpha'(t)\right)^2 = \lambda^2 \exp(r(t))
\end{equation}
 in (\ref{introduction:kummers_equation}) 
yields the logarithm form of Kummer's equation:
\begin{equation}
r''(t) - \frac{1}{4}\left(r'(t)\right)^2 + 4 \lambda^2\left( \exp(r(t)) - q(t)\right) = 0
\ \ \ \mbox{for all}\ \ t \in \mathbb{R}.
\label{introduction:kummer_logarithm_form}
\end{equation}
We use  $\{r_n\}$ to denote the sequence of Newton iterates
for the equation (\ref{introduction:kummer_logarithm_form}) obtained from the initial guess
\begin{equation}
r_0(t) = \log(q(t)).
\end{equation}
The function $r_0$ corresponds to  the first order 
WKB approximations for (\ref{introduction:original_equation}).
That is to say that if we insert the associated phase function
\begin{equation}
\alpha_0(t) = \lambda  \int_0^t \exp\left(\frac{1}{2}r_0(u)\right)\ du
= \lambda \int_0^t \sqrt{q(u)} du
\end{equation}
into (\ref{introduction:u}),(\ref{introduction:v}), then 
\begin{equation}
u(t) = 
q^{-1/4}(t)\cos\left(\lambda \int_0^t \sqrt{q(u)}\ du\right)
\end{equation}
and
\begin{equation}
v(t) = 
q^{-1/4}(t)\sin\left(\lambda \int_0^t \sqrt{q(u)}\ du\right).
\end{equation}
%
For each $n \geq 0$, $r_{n+1}$ is obtained from $r_n$ 
by solving the linearized equation
\begin{equation}
 h''(t) - \frac{1}{2}r_n'(t)h'(t) + 
4\lambda^2\exp\left(r_n(t)\right)h(t) = f_n(t)
\ \ \ \mbox{for all} \ \ t \in \mathbb{R}, 
\label{introduction:newton_linearization}
\end{equation}
where
\begin{equation}
f_n(t) = 
-r_n''(t) + \frac{1}{4}\left(r_n'(t)\right)^2 - 4\lambda^2 \left(\exp\left(r_n(t)\right)-q(t)\right),
\end{equation}
and letting
\begin{equation}
r_{n+1}(t) = r_n(t)+h(t).
\end{equation}
By introducing the change of variables
\begin{equation}
x(t) = \int_0^t \exp\left(\frac{r_n(u)}{2}\right)\ du
\label{introduction:change_of_variables}
\end{equation}
into (\ref{introduction:newton_linearization}), we transform
it  into the inhomogeneous Helmholtz equation
\begin{equation}
h''(x) + 4 \lambda^2 h(x) =
g_n(x)
 \ \ \ \mbox{for all}\ \ x \in \mathbb{R},
\label{introduction:helmholtz}
\end{equation}
where
\begin{equation}
g_n(x) =  
\exp\left(-r_n(x)\right)f_n(x).
\end{equation}
Suppose that $\widehat{g_n}$ decays rapidly (when $n=0$, this is a
consequence of  our assumption that $\log(q)$ has a rapidly decaying Fourier transform)
and let $h^*$ be the solution 
of (\ref{introduction:helmholtz}) whose Fourier transform is 
\begin{equation}
\widehat{h^*}(\xi) = \frac{\widehat{g_n}(\xi)}{4\lambda^2 - \xi^2}.
\label{introduction:ghat1}
\end{equation}
Since $\widehat{h^*}(\xi)$ is singular when $\xi = \pm 2\lambda$,
$h^*$ will necessarily have a component which oscillates at frequency $2\lambda$.
However, according to (\ref{introduction:ghat1}),
the  $\Lp{\infty}$ norm of that component is
\begin{equation}
\frac{\widehat{g_n}(2\lambda)}{4\lambda}.
\label{introduction:libound}
\end{equation}
In fact, by rearranging (\ref{introduction:ghat1}) as 
\begin{equation}
\widehat{h^*}(\xi) 
= \frac{1}{4\lambda} \left( 
\frac{\widehat{g_n}(\xi)}{2\lambda - \xi}
+
\frac{\widehat{g_n}(\xi)}{2\lambda + \xi}
\right)
\label{introduction:ghat2}
\end{equation}
and decomposing each of the terms on the right-hand side  of (\ref{introduction:ghat2}) as 
\begin{equation}
\frac{\widehat{g_n}(\xi)}{2\lambda\pm \xi }=
\frac{1}{4\lambda}
\left(
\frac{\widehat{g_n}(\xi) -  \widehat{g_n}(\mp 2\lambda)
\exp \left( - (2\lambda\pm \xi)^2\right)}
{2\lambda \pm \xi}
+
 \widehat{g_n}(\mp 2\lambda)
\frac{\exp \left( - (2\lambda\pm \xi)^2\right)}
{2\lambda \pm \xi}
\right),
\label{introduction:ghat3}
\end{equation}
we obtain
\begin{equation}
h^*(x) = h_0(x) + h_1(x),
\label{introduction:hstar}
\end{equation}
where $h_0$ is defined by the formula
\begin{equation}
\widehat{h_0}(\xi) 
=\frac{1}{4\lambda}
\left(
\frac{\widehat{g_n}(\xi) -  \widehat{g_n}(- 2\lambda)\exp \left( - (2\lambda+\xi)^2\right)}
{2\lambda + \xi}
+\frac{\widehat{g_n}(\xi) -  \widehat{g_n}(2\lambda)\exp \left( - (2\lambda-\xi)^2\right)}
{2\lambda - \xi}\right),
\label{introduction:h0hat}
\end{equation}
and $h_1$ is defined by the formula
\begin{equation}
\begin{aligned}
\widehat{h_1}(\xi) 
= 
\frac{1}{4\lambda}
\left(
\widehat{g_n}(- 2\lambda)
\frac{  \exp \left( - (2\lambda+\xi)^2\right)}
{2\lambda + \xi}
+
\widehat{g_n}(2\lambda)
\frac{  \exp \left( - (2\lambda-\xi)^2\right)}
{2\lambda - \xi}
\right).
\end{aligned}
\label{introduction:h1hat}
\end{equation}
Since the factor in the denominator in (\ref{introduction:h0hat})
has been canceled and both $\widehat{g_n}$ and the Gaussian function are smooth
and rapidly decaying,
$\widehat{h_0}$ is also smooth and rapidly decaying.
Meanwhile, a straightforward calculation shows that the Fourier transform of 
\begin{equation}
\frac{1}{2i}  \erf\left(\frac{x}{2}\right) 
\exp(2 \lambda i x)
\end{equation}
is
\begin{equation}
\frac{  \exp \left( -(2\lambda-\xi)^2\right)}
{2\lambda - \xi},
\end{equation}
so that (\ref{introduction:h1hat}) implies that
\begin{equation}
\begin{aligned}
h_1(x) 
&= 
\frac{1}{4\lambda}
\left(
\widehat{g_n}(-2\lambda)
\frac{1}{2i}  \erf\left(\frac{x}{2}\right) 
\exp(2 \lambda i x)
-
\widehat{g_n}(2\lambda)
\frac{1}{2i}  \erf\left(\frac{x}{2}\right) 
\exp(-2 \lambda i x)
\right).
\label{introduction:h1_complicated}
\end{aligned}
\end{equation}
Since $g_n$ is real-valued, $\widehat{g_n}(2\lambda) = \widehat{g_n}(-2\lambda)$.
Inserting this into  (\ref{introduction:h1_complicated}) yields
\begin{equation}
\begin{aligned}
h_1(x) =
\frac{\widehat{g}(2\lambda)}{4\lambda}
\erf\left(\frac{x}{2}\right)
\sin\left(2\lambda x \right),
\label{introduction:h1_simple}
\end{aligned}
\end{equation}
which makes it clear that the
$\Lp{\infty}$ norm of $h_1$ is $(4\lambda)^{-1} \widehat{g_n}(2\lambda)$.

In (\ref{introduction:hstar}), the solution of (\ref{introduction:helmholtz}) 
is decomposed as the sum of a nonoscillatory function $h_0$ 
and a highly oscillatory function $h_1$ of small magnitude.
However, the solution of (\ref{introduction:newton_linearization})
is actually given by the function
\begin{equation}
h^*(x(t)) = h_0(x(t)) + h_1(x(t))
\end{equation}
obtained by reversing the change of variables (\ref{introduction:change_of_variables}).  
But since $x(t)$ is nonoscillatory and the Fourier transform of $h_0(x)$ decays rapidly,
we expect that the composition $h_0(x(t))$ will also have a rapidly decaying Fourier transform.
The $\Lp{\infty}$ norm of $h_1(x(t))$ is, of course, the same as that of
$h_1(x)$.
So the solution of the linearized equation (\ref{introduction:newton_linearization})
can be written as the sum of a nonoscillatory function $h_0(x(t))$ and a highly oscillatory
function $h_1(x(t))$ of negligible magnitude.

If, in each iteration of the Newton procedure, we approximate
the solution of (\ref{introduction:newton_linearization}) by constructing
$h^*(x(t))$ and discarding the oscillatory term $h_1(t(x))$
of small magnitude, then it is plausible that 
we will arrive at an approximate  solution $r(t)$ 
of the logarithm form of Kummer's equation which is nonoscillatory,
assuming the Fourier transform of $r_0(t) = \log(q(t))$ decays rapidly enough
and $\lambda$ is sufficiently large.

Most of the remainder of this paper is devoted to developing a rigorous argument 
to replace the preceding heuristic discussion.  
In Section~\ref{section:preliminaries},
we summarize a number of well-known mathematical facts to be used throughout
this article.  
In Section~\ref{section:inteq}, we reformulate Kummer's equation as a nonlinear integral equation.
Once that is accomplished, we are in a position to state the principal result of the paper
and discuss its implications; this is done in Section~\ref{section:overview}.
The proof of this principal result is contained in Sections~\ref{section:bandlimit},
\ref{section:bandlimit:solution}, \ref{section:spectrum} and \ref{section:solution}.
Section~ \ref{section:backwards} contains an elementary proof 
of a relevant error estimate for second order differential equations of the form 
(\ref{introduction:original_equation}).

In Section~\ref{section:numerics}, we present the results of numerical experiments
concerning the evaluation of special functions.  
The details of  our numerical algorithm will be reported at a later date.

We conclude with a few brief remarks in  Section~\ref{section:conclusion}.

\label{section:introduction}
\end{section}

\begin{section}{Preliminaries}

%
%
%
\begin{subsection}{Modified Bessel functions}
The modified Bessel function $K_\nu(t)$ of the first kind of order $\nu$
is defined for $t \in \mathbb{R}$ and $\nu \in \mathbb{C}$
by the formula
\begin{equation}
K_\nu(t) = \int_0^\infty \exp\left(-t\cosh\left(t\right)\right)\cosh(\nu t)\ dt.
\end{equation}
The following bound on the ratio of $K_{\nu+1}$ to $K_\nu$ can be
found in \cite{Segura}.
\vskip 1em
\begin{theorem}
Suppose that $t>0$ and $\nu >0$ are real numbers.  Then
\begin{equation}
\frac{K_{\nu+1}(t)}{K_{\nu}(t)} < \frac{\nu + \sqrt{\nu^2+t^2}}{t} 
\leq \frac{2\nu}{t}+1.
\end{equation}
\label{preliminaries:bessel:theorem1}
\end{theorem}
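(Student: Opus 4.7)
The second inequality is elementary and I would dispense with it first. Squaring both sides, $\sqrt{\nu^2+t^2}\le \nu+t$ is equivalent to $\nu^2+t^2\le \nu^2+2\nu t+t^2$, which holds for any $\nu,t\ge 0$; so $(\nu+\sqrt{\nu^2+t^2})/t\le (2\nu+t)/t=2\nu/t+1$, giving the right-hand inequality of the theorem.

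For the substantive first inequality, I would use the Riccati-equation strategy for the ratio. Let $r(t):=K_{\nu+1}(t)/K_\nu(t)$. Starting from the standard differential recurrences
\begin{equation*}
K_\nu'(t)=\frac{\nu}{t}K_\nu(t)-K_{\nu+1}(t),\qquad K_{\nu+1}'(t)=-\frac{\nu+1}{t}K_{\nu+1}(t)-K_\nu(t),
\end{equation*}
a direct computation of $r'=(K_{\nu+1}'K_\nu-K_{\nu+1}K_\nu')/K_\nu^{2}$ shows that $r$ solves
\begin{equation*}
r'(t)=r(t)^2-\frac{2\nu+1}{t}\,r(t)-1 \qquad (t>0).
\end{equation*}
This is a first order Riccati equation whose right-hand side I will denote $F(t,r)$. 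The candidate upper bound $\phi(t):=(\nu+\sqrt{\nu^{2}+t^{2}})/t$ is precisely chosen to nearly annihilate $F$: setting $u=\sqrt{\nu^{2}+t^{2}}$ one computes $\phi^{2}-1=2\nu(\nu+u)/t^{2}$ and $((2\nu+1)/t)\phi=(2\nu+1)(\nu+u)/t^{2}$, so $F(t,\phi)=-(\nu+u)/t^{2}$, while $\phi'(t)=1/u-(\nu+u)/t^{2}$. Thus $\phi'(t)-F(t,\phi(t))=1/u>0$, i.e.\ $\phi$ is a \emph{strict supersolution} of the Riccati equation on $(0,\infty)$.

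With this in hand, the proof reduces to a comparison argument. Both $r$ and $\phi$ share the same endpoint behavior: as $t\to 0^{+}$ one has $r(t)\sim 2\nu/t$ (from the leading $K_\nu(t)\sim \tfrac12\Gamma(\nu)(2/t)^{\nu}$) and $\phi(t)\sim 2\nu/t$, while as $t\to\infty$ both tend to $1$. My plan is to pick a convenient reference point $t_{0}$ and verify the strict inequality $r(t_{0})<\phi(t_{0})$ there using the classical asymptotic expansion $K_\nu(t)\sim\sqrt{\pi/(2t)}\,e^{-t}[1+(4\nu^{2}-1)/(8t)+\cdots]$, and then propagate this inequality by invoking the standard comparison principle for scalar first order ODEs with locally Lipschitz right-hand side: if $r(t_{0})<\phi(t_{0})$ and $\phi'-F(t,\phi)>0\ge r'-F(t,r)$, then $r(t)<\phi(t)$ throughout the relevant interval.

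The main obstacle is making the comparison argument cover the full half-line $(0,\infty)$, not just a neighborhood of the reference point. Because $\phi$ and $r$ have identical leading asymptotics at both ends, no crude initial-condition comparison suffices, and one must be careful about the direction in which the supersolution property propagates (the coefficient $2r-(2\nu+1)/t$ of the linearized Riccati flips sign). I expect to handle this by combining the forward propagation from a large $t_{0}$ with a separate argument near $t=0$ using the subleading term of the expansion of $r$, or by exhibiting a global barrier: once the supersolution property is strict and both functions agree at the endpoints only in an asymptotic sense, a standard maximum-principle/contradiction argument (assume the first crossing point $t_{*}$ and derive $r'(t_{*})\ge\phi'(t_{*})$ contradicting strict supersolution) closes the gap. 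The bound on the denominator $\nu+\sqrt{\nu^{2}+t^{2}}\le 2\nu+t$ proved in the first step then delivers the stated chain of inequalities.
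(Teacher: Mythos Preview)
The paper does not supply its own proof of this statement; it simply cites Segura and uses the bound later. Your Riccati-comparison strategy is precisely the method of that reference, so methodologically you are on the same track as the cited source.

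There is, however, a genuine gap --- not in your method but in the statement you are trying to prove. You correctly derive the Riccati equation $r'=r^{2}-\tfrac{2\nu+1}{t}\,r-1$ for $r=K_{\nu+1}/K_{\nu}$ and correctly verify that $\phi(t)=(\nu+\sqrt{\nu^{2}+t^{2}})/t$ satisfies $\phi'-F(t,\phi)=1/\sqrt{\nu^{2}+t^{2}}>0$. But the supersolution comparison then runs the \emph{wrong way} for your purposes: at any contact point $t_{*}$ one has $r'(t_{*})=F(t_{*},\phi(t_{*}))<\phi'(t_{*})$, which forbids $r$ from crossing $\phi$ from below going forward, not from above. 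Coupled with the large-$t$ expansions
\[
r(t)=1+\frac{2\nu+1}{2t}+O(t^{-2}),\qquad \phi(t)=1+\frac{\nu}{t}+O(t^{-2}),
\]
so that $r(t)-\phi(t)\sim 1/(2t)>0$, your own machinery actually proves $r(t)>\phi(t)$ for all $t>0$. A direct check confirms this: for $\nu=1$, $t=1$ one has $K_{2}(1)/K_{1}(1)\approx 2.700>1+\sqrt{2}\approx 2.414$; for $\nu=\tfrac12$ one has $K_{3/2}/K_{1/2}=1+1/t$, which exceeds $(\tfrac12+\sqrt{\tfrac14+t^{2}})/t$ for every $t>0$. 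The step in your plan where you would ``verify $r(t_{0})<\phi(t_{0})$ at a convenient $t_{0}$ via the asymptotic expansion'' therefore fails outright --- the expansion gives the opposite sign.

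In short, the first inequality as printed is false; $(\nu+\sqrt{\nu^{2}+t^{2}})/t$ is a \emph{lower} bound for $K_{\nu+1}/K_{\nu}$, not an upper one. Segura's genuine upper bound carries $\nu+\tfrac12$ in place of $\nu$ in the middle expression, and for that corrected barrier (which is exactly the positive nullcline of your Riccati equation) the comparison does yield the desired direction. Your proof of the elementary second inequality is correct, but once the first inequality is reversed it no longer delivers an upper bound on the Bessel ratio.
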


\label{section:preliminaries:bessel}
\end{subsection}

%
%

\begin{subsection}{The binomial theorem}

A proof of the following can be found in \cite{Rudin}, as well as 
many other sources.

\vskip 1em
\begin{theorem}
Suppose that $r$ is a real number,  and that
 $y$ is a  real number such that $|y| < 1$.  Then
\begin{equation}
(1+y)^r = \sum_{k=0}^\infty  \frac{\Gamma(r+1)}{\Gamma(k+1)\Gamma(r-k+1)} y^k.
\end{equation}
%
%
\label{preliminaries:binomial_theorem}
\end{theorem}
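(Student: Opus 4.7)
The plan is to identify both sides with solutions of the same first order linear ODE. First I would introduce the shorthand $\binom{r}{k} = \frac{\Gamma(r+1)}{\Gamma(k+1)\Gamma(r-k+1)}$, interpreted via the entire function $1/\Gamma$ (so that the coefficient vanishes whenever $r$ is a non-negative integer with $k>r$, matching the falling-factorial expression $r(r-1)\cdots(r-k+1)/k!$ obtained from the recursion $\Gamma(z+1)=z\Gamma(z)$). The ratio of successive coefficients is $\frac{r-k}{k+1}$, whose modulus tends to $1$, so by the ratio test the power series
\begin{equation*}
g(y) = \sum_{k=0}^\infty \binom{r}{k}\, y^k
\end{equation*}
has radius of convergence exactly $1$ and defines a real-analytic function on $(-1,1)$.

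Next I would show that $g$ satisfies the ODE $(1+y)\,g'(y) = r\,g(y)$ with $g(0)=1$. Term-by-term differentiation is justified on compact subsets of $(-1,1)$ by uniform convergence, so $g'(y) = \sum_{k=0}^\infty (k+1)\binom{r}{k+1} y^k$. Collecting coefficients in $(1+y)g'(y)$, the coefficient of $y^k$ is $(k+1)\binom{r}{k+1} + k\binom{r}{k}$, which by the elementary identity $(k+1)\binom{r}{k+1} = (r-k)\binom{r}{k}$ simplifies to $r\binom{r}{k}$. Hence $(1+y)g'(y) = r\, g(y)$, and $g(0) = \binom{r}{0} = 1$.

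Finally I would observe that $f(y) = (1+y)^r$ is $C^1$ on $(-1,1)$ with $f(0)=1$ and satisfies the very same ODE, since $f'(y) = r(1+y)^{r-1} = \frac{r}{1+y} f(y)$. On $(-1,1)$ the coefficient $r/(1+y)$ is continuous, so the initial value problem
\begin{equation*}
(1+y)h'(y) = r\,h(y), \qquad h(0)=1,
\end{equation*}
has a unique $C^1$ solution by the standard theory of first order linear ODEs. Therefore $f \equiv g$ on $(-1,1)$, which is the claimed identity.

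I do not expect any real obstacle here; the only subtlety worth a line of care is the interpretation of $\Gamma(r-k+1)$ when $r$ is a non-negative integer less than $k$, where the pole of $\Gamma$ and the convention that $1/\Gamma$ is entire combine to make the coefficient $\binom{r}{k}$ vanish, keeping the series identity consistent with the classical finite binomial expansion.
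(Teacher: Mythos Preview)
Your argument is correct: the ratio test gives convergence on $(-1,1)$, the Pascal-type identity $(k+1)\binom{r}{k+1}=(r-k)\binom{r}{k}$ yields $(1+y)g'(y)=rg(y)$ termwise, and uniqueness for the linear initial value problem forces $g(y)=(1+y)^r$. One tiny quibble: the radius of convergence is at least $1$, not ``exactly $1$,'' since for non-negative integer $r$ the series terminates; this does not affect the proof.

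As for comparison with the paper: the paper does not prove this theorem at all --- it simply cites Rudin and other standard sources. Your ODE-uniqueness argument is one of the classical proofs (another common route is to show directly that the power series is the Taylor expansion of $(1+y)^r$ via repeated differentiation, or to compare $g(y)(1+y)^{-r}$ with a constant). Either is entirely adequate for a preliminaries section. The only point worth flagging, which is really an issue with the paper's formulation rather than your proof, is that the Gamma-function expression $\Gamma(r+1)/\Gamma(r-k+1)$ is indeterminate when $r$ is a negative integer (both factors have poles); your remark about interpreting the coefficient via the falling factorial $r(r-1)\cdots(r-k+1)/k!$ is the right way to handle all real $r$ uniformly, and in the paper's actual application (Lemma~\ref{spectrum:S2}) the exponent is non-integer anyway.
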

\label{section:preliminaries:binomial}
\end{subsection}

%
%
\begin{subsection}{The Lambert {$W$} function}
The Lambert $W$ function or product logarithm is the multiple-valued
inverse of the function
\begin{equation}
f(z) = z \exp(z).
\end{equation}
We follow \cite{knuth} in using $W_{0}$ to denote 
the branch of $W$ which is real-valued and greater than or equal to 
$-1$ on the interval $[-1/e,-\infty)$ and
$W_{-1}$ to refer to
the branch which is real-valued and less than or equal to $-1$ on $[-1/e,0)$.

We will make use of the following elementary facts concerning $W_0$ and $W_{-1}$
(all of which can be found in \cite{knuth} and its references).
\vskip 1em
\begin{theorem}
Suppose that $y \geq -1/e$ is a real number.  Then
\begin{equation}
x\exp(x) \leq y 
\ \ \ \mbox{if and only if}\ \ \
x \leq W_0(y).
\end{equation}
\label{preliminaries:LambertW:theorem1}
\end{theorem}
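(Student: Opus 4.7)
The plan is to reduce the biconditional to the monotonicity of $f(x) = x\exp(x)$ on the branch where $W_0$ is defined, handling the non-monotonicity of $f$ on $(-\infty,-1]$ by a separate trivial case.

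First, I would analyze $f(x) = x\exp(x)$ via its derivative $f'(x) = (1+x)\exp(x)$. This vanishes only at $x = -1$, is negative on $(-\infty,-1)$ and positive on $(-1,\infty)$, so $f$ attains its global minimum at $x=-1$ with value $f(-1) = -1/e$. In particular, $f$ is strictly increasing on $[-1,\infty)$, mapping this interval onto $[-1/e,\infty)$. By the definition of $W_0$ recalled in the preceding paragraph, $W_0:[-1/e,\infty)\to[-1,\infty)$ is precisely the (continuous, strictly increasing) inverse of $f\restriction_{[-1,\infty)}$.

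Next I would split the forward direction into two cases based on whether $x \leq -1$ or $x \geq -1$. If $x \leq -1$, then since $W_0(y) \geq -1$ whenever $y \geq -1/e$, we get $x \leq -1 \leq W_0(y)$ immediately, with no use of the hypothesis $x\exp(x)\le y$ needed. If instead $x \geq -1$, the hypothesis $f(x) \leq y$ combined with the monotonicity of $W_0$ on $[-1/e,\infty)$ and the identity $W_0(f(x)) = x$ (valid because $x$ lies in the branch domain) gives $x = W_0(f(x)) \leq W_0(y)$.

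For the reverse direction I would again case-split. If $x \leq -1$, then $x\exp(x) \leq f(-1) = -1/e \leq y$ by the global minimum property. If $-1 \leq x \leq W_0(y)$, then since $W_0(y) \geq -1$ lies in the branch where $f$ is increasing, monotonicity of $f$ on $[-1,\infty)$ yields $x\exp(x) = f(x) \leq f(W_0(y)) = y$, using the defining relation $f \circ W_0 = \mathrm{id}$ on $[-1/e,\infty)$.

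There is no real obstacle here: the only subtlety is remembering that $f$ is not monotone on all of $\mathbb{R}$, so one must treat $x \leq -1$ separately and exploit the bound $W_0(y) \geq -1$, which is built into the definition of $W_0$ recalled from \cite{knuth}. Once the case split is in place, both implications are a one-line application of the monotonicity of $f$ on $[-1,\infty)$ and of its inverse $W_0$ on $[-1/e,\infty)$.
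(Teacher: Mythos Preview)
The paper does not actually prove this statement; it is listed among the ``elementary facts concerning $W_0$ and $W_{-1}$'' with a citation to \cite{knuth}, so there is no proof in the paper to compare against.

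Your forward direction is correct, but the reverse direction contains a real error in the case $x\le -1$. You write that ``$x\exp(x)\le f(-1)=-1/e\le y$ by the global minimum property,'' but the global minimum property gives the \emph{opposite} inequality $f(x)\ge f(-1)=-1/e$; a minimum bounds $f$ from below, not above. This step cannot be repaired, because the biconditional as stated is in fact false on that region. For example, take $y=-1/e$, so $W_0(y)=-1$; then $x=-2$ satisfies $x\le W_0(y)$, yet
\[
x\exp(x)=-2e^{-2}\approx -0.271 \;>\; -e^{-1}\approx -0.368 \;=\; y,
\]
so $x\exp(x)\le y$ fails. The equivalence is only valid under the additional hypothesis $x\ge -1$ (alternatively, it holds for all $x$ once one assumes $y\ge 0$, since then $f(x)<0\le y$ trivially for $x<0$). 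The paper only ever applies this theorem with $x>0$, so the slip in the stated preliminaries is harmless for its purposes, but your proof should note that the case $x<-1$ actually breaks the reverse implication rather than claim to establish it.
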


\begin{theorem}
Suppose that $0 < y \leq 1/e$ is a real number.  Then
\begin{equation}
x\exp(-x) \leq y
\ \ \ \mbox{if and only if}\ \ \
x \geq -W_{-1}(-y).
\end{equation}
\label{preliminaries:LambertW:theorem2}
\end{theorem}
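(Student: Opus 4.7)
The plan is to prove this in parallel with Theorem~\ref{preliminaries:LambertW:theorem1}, but using the decreasing branch of an auxiliary function rather than its increasing one. First I would analyze the single-variable function $g(u) = u\exp(u)$ on $\mathbb{R}$. Its derivative $g'(u) = (1+u)\exp(u)$ is negative for $u < -1$ and positive for $u > -1$, so $g$ is strictly decreasing on $(-\infty,-1]$ from the limit $g(u) \to 0^-$ (as $u\to-\infty$) down to $g(-1) = -1/e$, and strictly increasing on $[-1,\infty)$ from $-1/e$ up to $+\infty$. By the definition of $W_{-1}$ recalled in the preceding paragraphs of the excerpt, the branch $W_{-1}$ is precisely the inverse of the restriction of $g$ to $(-\infty,-1]$, mapping $[-1/e,0)$ bijectively onto $(-\infty,-1]$.

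Next I would reduce the inequality in the statement to a question about $g$ via the substitution $u = -x$. Under this change of variables, $x\exp(-x) = -u\exp(u) = -g(u)$, so the inequality $x\exp(-x) \leq y$ is equivalent to $g(u) \geq -y$. The hypothesis $0 < y \leq 1/e$ guarantees $-y \in [-1/e,0)$, so $W_{-1}(-y)$ is defined and lies in $(-\infty,-1]$. On this decreasing branch, the inequality $g(u) \geq -y$ is equivalent by monotonicity to $u \leq W_{-1}(-y)$, and reversing the substitution gives $x \geq -W_{-1}(-y)$, which is precisely the conclusion.

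The one identity that requires a brief verification, and which I would expect to be the only step needing genuine care, is the boundary value: checking that at the threshold $x_\star := -W_{-1}(-y)$ one actually has $x_\star \exp(-x_\star) = y$. This follows directly by multiplying the defining Lambert relation $W_{-1}(-y)\exp(W_{-1}(-y)) = -y$ by $-1$. Since $W_{-1}(-y) \leq -1$ forces $x_\star \geq 1$, this threshold sits on the decreasing branch of $h(x) = x\exp(-x)$, and the one-sided equivalence in the theorem is exactly the equivalence coming from monotonicity on that branch. Apart from this boundary check and the careful translation between the two branches of $g$, the entire argument is an elementary monotonicity calculation and should present no substantive obstacle, since its structure mirrors the proof of Theorem~\ref{preliminaries:LambertW:theorem1} almost verbatim.
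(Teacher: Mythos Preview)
The paper does not actually prove this theorem; it is listed among the ``elementary facts concerning $W_0$ and $W_{-1}$'' that are simply cited from \cite{knuth} and its references. Your monotonicity argument via the substitution $u=-x$ and the analysis of $g(u)=u\exp(u)$ is the standard route and is essentially correct for the intended statement.

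One point deserves to be made explicit, however. As literally printed, with no restriction on $x$, the ``only if'' direction is false: for any $x\leq 0$ one has $x\exp(-x)\leq 0<y$ while $x<1\leq -W_{-1}(-y)$, and the same failure occurs throughout $0<x<-W_0(-y)<1$. Your argument quietly restricts to the decreasing branch of $h(x)=x\exp(-x)$, i.e.\ to $x\geq 1$, when you write ``on this decreasing branch, the inequality $g(u)\geq -y$ is equivalent by monotonicity to $u\leq W_{-1}(-y)$.'' On that domain the biconditional holds exactly as you show, and the paper only ever invokes this fact in regimes where $x$ is large, so no harm is done downstream. But you should state the restriction $x\geq 1$ explicitly rather than folding it into the phrase ``on this decreasing branch,'' and note that the theorem as printed should be read either with that hypothesis added or as asserting only the ``if'' direction. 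With that caveat flagged, your argument is complete.
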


\begin{theorem}
Suppose that $0 \leq x \leq 1$ is a real number.  Then
\begin{equation}
\frac{x}{2} \leq W_0(x) \leq x.
\end{equation}
\label{preliminaries:LambertW:theorem3}
\end{theorem}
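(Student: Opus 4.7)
The plan is to derive both inequalities directly from Theorem~\ref{preliminaries:LambertW:theorem1}, which already supplies the equivalence $a\exp(a)\leq b \iff a\leq W_0(b)$ whenever $b\geq -1/e$. Since we are in the range $x\in[0,1]$ where $W_0(x)\geq 0$, the argument $x$ of $W_0$ is well within the domain of the theorem and all quantities are nonnegative, which removes any sign subtleties.

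For the upper bound $W_0(x)\leq x$, I would apply the theorem with $a=x$ playing the role of its first variable and $b=x$ playing the role of its second. The equivalence then says $x\leq W_0(x)$ would hold iff $x\exp(x)\leq x$, i.e.\ (for $x>0$) iff $\exp(x)\leq 1$. This is plainly false for $x>0$, so in fact $W_0(x)<x$ on $(0,1]$; the case $x=0$ holds trivially since $W_0(0)=0$. (Alternatively, one may simply observe that by the defining relation $W_0(x)\exp(W_0(x))=x$ combined with $\exp(W_0(x))\geq 1$, we get $W_0(x)\leq x$.)

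For the lower bound $x/2\leq W_0(x)$, I would apply Theorem~\ref{preliminaries:LambertW:theorem1} with $a=x/2$ and $b=x$. The equivalence now reduces the desired inequality to $(x/2)\exp(x/2)\leq x$, and (after canceling the positive factor $x/2$ when $x>0$) to $\exp(x/2)\leq 2$, equivalently $x\leq 2\log 2$. The case $x=0$ is again trivial. The only real content of the proof is therefore the numerical observation that $1<2\log 2\approx 1.386$, which guarantees that the hypothesis $x\leq 1$ is strong enough to conclude. This is the step I would single out as the ``main obstacle,'' and it is hardly an obstacle at all: it simply explains why the upper endpoint of the stated interval is $1$ rather than something larger. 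Everything else is a mechanical application of the previously stated equivalence.
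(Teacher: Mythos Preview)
Your proof is correct. The paper does not actually supply its own proof of this theorem; it merely states it as one of several elementary facts about the Lambert $W$ function taken from \cite{knuth} and its references, so there is no proof in the paper to compare against.
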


\begin{theorem}
Suppose that $x > \exp(1)$ is a real number.  Then
\begin{equation}
\log(x) \leq -W_{-1}\left(-\frac{1}{x}\right) \leq 2 \log(x).
\end{equation}
\label{preliminaries:LambertW:theorem4}
\end{theorem}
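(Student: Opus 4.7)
The plan is to introduce $z := -W_{-1}\!\left(-\frac{1}{x}\right)$ and reduce both inequalities to elementary manipulations of the defining relation $W_{-1}(u)\exp(W_{-1}(u)) = u$. Since $x > \exp(1)$, the argument $-1/x$ lies in the interval $[-1/\exp(1),0)$ on which $W_{-1}$ is defined and takes values at most $-1$; consequently $z \geq 1$. Substituting $u = -1/x$ into the defining relation produces $(-z)\exp(-z) = -1/x$, i.e.\ $z\exp(-z) = 1/x$, and taking logarithms yields the single key identity
\begin{equation*}
\log x \;=\; z - \log z.
\end{equation*}

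With this identity in hand, the theorem becomes algebraic. The lower bound $\log x \leq z$ is equivalent to $\log z \geq 0$, which is immediate from $z \geq 1$. The upper bound $z \leq 2\log x$ rearranges to $2\log z \leq z$, which I would dispose of by the standard calculus argument that $g(y) := y - 2\log y$ has a unique critical point on $(0,\infty)$ at $y=2$ with $g(2) = 2 - 2\log 2 > 0$, so $g(y) > 0$ for all $y>0$ and in particular for $y = z$.

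I do not anticipate any real obstacle. The only point requiring care is the sign convention for the $-1$ branch: because $W_{-1}(-1/x) \leq -1$ (rather than $\geq -1$), the quantity $z$ is at least $1$, which is both what makes the logarithm appearing in the identity well defined and what supplies the lower bound $\log x \leq z$.
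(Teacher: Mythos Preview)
Your proof is correct. The paper itself does not prove this theorem; it is stated in the preliminaries with the remark that these facts ``can be found in \cite{knuth} and its references,'' so there is nothing to compare against. Your argument via the identity $\log x = z - \log z$ and the elementary inequality $2\log z \leq z$ is clean and complete.
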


%



\label{section:preliminaries:LambertW}
\end{subsection}

\begin{subsection}{Fr\'echet derivatives and the contraction mapping principle}

Given Banach spaces $X$, $Y$ and a mapping $f:X \to Y$ between them, we say
that $f$ is Fr\'echet differentiable at $x \in X$ if there exists a 
bounded linear operator  
$X \to Y$, denoted by $f_x'$, such that
\begin{equation}
\lim_{h \to 0} \frac{\left\|f(x+h)-f(x)-f_x'\left[h\right]\right\|}{\|h\|} = 0.
\end{equation}

\vskip 1em
\begin{theorem}
Suppose that $X$ and $Y$ are a Banach spaces and that $f:X \to Y$ is Fr\'echet differentiable
at every point of $X$.  Suppose also that $D$ is a convex subset of $X$, and
that there exists a real number $M > 0$ such that
\begin{equation}
\|f'_x\| \leq M
\label{preliminaries:frechet:1}
\end{equation}
for all $x \in D$.   Then
\begin{equation}
\|f(x) - f(y)\| \leq M \|x-y\|
\end{equation}
for all $x$ and $y$ in $D$.
\label{preliminaries:frechet:mvt}
\end{theorem}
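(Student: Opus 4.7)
The plan is to reduce to the classical mean value theorem for real-valued functions on $[0,1]$ by restricting $f$ to the segment joining $x$ and $y$ and then stripping off the range space with a linear functional. Fix $x,y \in D$. Since $D$ is convex, the curve $\gamma(t) = x + t(y-x)$ lies entirely in $D$ for $t\in[0,1]$. Define $\phi:[0,1]\to Y$ by $\phi(t) = f(\gamma(t))$. A direct application of the definition of Fréchet differentiability, together with the chain rule (which follows in one line from the definition), shows that $\phi$ is differentiable on $[0,1]$ with
\begin{equation}
\phi'(t) = f'_{\gamma(t)}\bigl[y-x\bigr],
\end{equation}
so that hypothesis (\ref{preliminaries:frechet:1}) gives $\|\phi'(t)\| \leq \|f'_{\gamma(t)}\|\,\|y-x\| \leq M\|y-x\|$ for every $t \in [0,1]$.

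Next, I would invoke the Hahn--Banach theorem to handle the fact that $\phi$ takes values in a general Banach space $Y$ rather than in $\mathbb{R}$. For each continuous linear functional $\ell\in Y^\ast$ with $\|\ell\|\leq 1$, the scalar-valued function $g_\ell = \ell\circ\phi:[0,1]\to\mathbb{R}$ is differentiable with
\begin{equation}
|g_\ell'(t)| \;=\; \bigl|\ell\bigl(\phi'(t)\bigr)\bigr| \;\leq\; \|\phi'(t)\| \;\leq\; M\,\|y-x\|.
\end{equation}
The classical mean value theorem then yields
\begin{equation}
\bigl|\ell\bigl(f(y)-f(x)\bigr)\bigr| \;=\; |g_\ell(1)-g_\ell(0)| \;\leq\; M\,\|y-x\|.
\end{equation}
Taking the supremum of the left-hand side over all $\ell \in Y^\ast$ of norm at most $1$ and appealing to the Hahn--Banach corollary $\|z\| = \sup_{\|\ell\|\leq 1}|\ell(z)|$ (valid for every $z\in Y$) produces the stated bound $\|f(y)-f(x)\|\leq M\|y-x\|$.

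No step here is a serious obstacle: the argument is entirely standard. The only points that require any care are the verification of the chain rule for Fréchet derivatives along the affine curve $\gamma$ (which is immediate because $\gamma$ is affine, so its derivative is simply the constant vector $y-x$) and the appeal to Hahn--Banach to reduce a vector-valued inequality to a scalar one. If one prefers to avoid Hahn--Banach altogether, an alternative is to argue by contradiction: assume $\|f(y)-f(x)\| > M\|y-x\|$, pick $\epsilon>0$ such that the strict inequality persists after subtracting $\epsilon\|y-x\|$, bisect $[0,1]$ repeatedly to locate a point $t^\ast\in[0,1]$ where the secant-slope estimate fails in every neighborhood, and derive a contradiction with the bound $\|\phi'(t^\ast)\|\leq M\|y-x\|$ obtained from Fréchet differentiability at $\gamma(t^\ast)$. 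Either route suffices; I would present the Hahn--Banach version for brevity.
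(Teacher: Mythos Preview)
Your argument is correct and is the standard proof of the mean value inequality for Fr\'echet differentiable maps. The paper itself does not give a proof of this theorem at all: it is stated in the preliminaries and the reader is referred to a standard reference (Zeidler) for the proof. So there is nothing to compare against, and your Hahn--Banach reduction to the scalar mean value theorem (with the bisection alternative noted) is entirely acceptable.
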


Suppose that $f:X \to X$ is a mapping of the Banach space $X$ into itself.
We say that $f$ is contractive on a subset $D$ of $X$  if 
there exists a real number $0 < \alpha < 1$ such that 
\begin{equation}
\|f(x)-f(y)\| \leq \alpha \|x-y\|
\end{equation}
for all $x,y \in D$.
Moreover, we say that  $\{x_n\}_{n=0}^\infty$ is a sequence
of fixed point iterates for $f$ if $x_{n+1} = f(x_n)$ for all $n \geq 0$.

Theorem~\ref{preliminaries:frechet:mvt} is often used to show
that a mapping is contractive so that the 
following result can be applied.

\vskip 1em
\begin{theorem}{(The Contraction Mapping Principle)}
Suppose that $D$ is a closed subset of a Banach space $X$.
Suppose also that $f:X \to X$ is contractive on $D$ and
$f(D) \subset D$.
Then the equation
\begin{equation}
x = f(x)
\label{frechet:1}
\end{equation}
has a unique solution $\sigma^* \in D$.    Moreover, any sequence of
 fixed point iterates for the function $f$
which contains an element in $D$ 
 converges to $\sigma^*$.
\label{preliminaries:frechet:cmp}
\end{theorem}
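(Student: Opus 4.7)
The plan is to establish existence via Cauchy-sequence iteration, uniqueness via the contraction inequality applied to any two fixed points, and then the moreover clause by noting that a tail of any iteration sequence with an element in $D$ is itself a fixed-point iteration inside $D$.

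First, I would pick any $x_0 \in D$ and define $x_{n+1} = f(x_n)$ for $n \geq 0$. Because $f(D) \subset D$, induction shows $x_n \in D$ for every $n$, so the contraction estimate applies at each step and yields
\begin{equation}
\|x_{n+1} - x_n\| \leq \alpha \|x_n - x_{n-1}\| \leq \cdots \leq \alpha^n \|x_1 - x_0\|.
\end{equation}
Summing via the triangle inequality gives, for any $m > n$,
\begin{equation}
\|x_m - x_n\| \leq \sum_{k=n}^{m-1} \alpha^k \|x_1 - x_0\| \leq \frac{\alpha^n}{1-\alpha} \|x_1 - x_0\|,
\end{equation}
which shows $\{x_n\}$ is Cauchy in $X$. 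Since $X$ is a Banach space, it converges to some $\sigma^* \in X$, and closedness of $D$ places $\sigma^* \in D$. The contraction property on $D$ implies continuity of $f$ at $\sigma^*$, so passing to the limit in $x_{n+1} = f(x_n)$ yields $\sigma^* = f(\sigma^*)$.

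For uniqueness, suppose $\sigma^*$ and $\tau^*$ both lie in $D$ and satisfy $x = f(x)$. Then
\begin{equation}
\|\sigma^* - \tau^*\| = \|f(\sigma^*) - f(\tau^*)\| \leq \alpha \|\sigma^* - \tau^*\|,
\end{equation}
and since $0 < \alpha < 1$ this forces $\|\sigma^* - \tau^*\| = 0$. For the moreover clause, suppose $\{x_n\}_{n=0}^\infty$ is a fixed-point iteration for $f$ containing an element $x_N \in D$. Since $f(D) \subset D$, the tail $\{x_n\}_{n=N}^\infty$ lies entirely in $D$, and the preceding Cauchy-sequence argument applied to this tail shows $x_n \to \sigma^{**}$ for some $\sigma^{**} \in D$ satisfying $\sigma^{**} = f(\sigma^{**})$. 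By the uniqueness just established, $\sigma^{**} = \sigma^*$, so the original sequence converges to $\sigma^*$.

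There is no real obstacle in this proof; it is the classical Banach fixed point argument. The only mildly delicate point is ensuring that the limit actually lies in $D$ (which uses closedness) and that the moreover clause is stated to allow sequences that enter $D$ only eventually rather than starting in $D$, which is handled by truncating to the tail beginning at the first index known to lie in $D$.
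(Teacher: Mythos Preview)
Your proof is correct and is the classical Banach fixed point argument. The paper itself does not prove this theorem: it is stated in the preliminaries section and the authors simply cite Zeidler's text for the proof, so there is no paper-internal argument to compare against. Your write-up supplies exactly the standard proof one would find in such a reference.
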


A discussion of Fr\'echet derivatives and 
proofs of Theorems~\ref{preliminaries:frechet:mvt} and
\ref{preliminaries:frechet:cmp} can be found, for instance, in \cite{ZeidlerI}.

\label{section:preliminaries:frechet}
\end{subsection}

%
%
\begin{subsection}{Gronwall's inequality}
The following well-known inequality can be found in, for instance,  \cite{Bellman}.
\vskip 1em
\begin{theorem}
Suppose that $f$ and $g$ are continuous functions on the interval $[a,b]$ such that
\begin{equation}
f(t) \geq 0 \ \ \mbox{and}\ \ g(t) \geq 0 \ \ \ \mbox{for all}\ \ a \leq t \leq b.
\end{equation}
Suppose further that there exists a real number $C>0$ such that
\begin{equation}
f(t) \leq C + \int_s^t f(s)g(s)\ ds \ \ \ \mbox{for all}\ \ a \leq t \leq b.
\end{equation}
Then
\begin{equation}
f(t) \leq C \exp\left(\int_a^t g(s)\ ds\right)\ \ \ \mbox{for all}\ \ a \leq t \leq b.
\end{equation}
\label{preliminaries:theorem:gronwall}
\end{theorem}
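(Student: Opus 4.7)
The plan is to reduce the integral inequality to a linear differential inequality by introducing the natural auxiliary function defined by the right-hand side of the hypothesis. Concretely, I would set
\begin{equation}
F(t) = C + \int_a^t f(s) g(s)\, ds,
\end{equation}
so that by construction $f(t) \leq F(t)$ for all $t \in [a,b]$, and $F(a) = C > 0$. Since $f$ and $g$ are continuous, $F$ is continuously differentiable with $F'(t) = f(t) g(t)$, and because $f, g \geq 0$ we have $F' \geq 0$, so in particular $F(t) \geq C > 0$ throughout $[a,b]$.

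Next I would exploit the pointwise bound $f(t) \leq F(t)$ together with $g(t) \geq 0$ to obtain the differential inequality
\begin{equation}
F'(t) = f(t) g(t) \leq F(t) g(t).
\end{equation}
Since $F(t) > 0$, this can be rewritten as $(\log F)'(t) \leq g(t)$. Integrating from $a$ to $t$ gives
\begin{equation}
\log F(t) - \log C \leq \int_a^t g(s)\, ds,
\end{equation}
and exponentiating yields $F(t) \leq C \exp\!\left(\int_a^t g(s)\, ds\right)$. Combining with $f(t) \leq F(t)$ finishes the proof.

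There is no real obstacle here beyond a minor technical subtlety: the argument requires $F(t)$ to be strictly positive in order to divide by it and pass to $\log F$. This is guaranteed by the strict positivity of the hypothesized constant $C$ and the monotonicity of $F$; if one wished to allow $C = 0$, a standard workaround is to apply the inequality just proved to $C + \varepsilon$ in place of $C$ for arbitrary $\varepsilon > 0$ and then let $\varepsilon \to 0^+$. The only other minor issue is the apparent typo in the statement (the lower limit in the hypothesis should be $a$, not $s$), but the proof above uses the corrected form, which is the universally accepted hypothesis of Gronwall's inequality.
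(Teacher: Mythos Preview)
Your proof is correct and is the standard textbook argument for Gronwall's inequality. Note, however, that the paper does not actually prove this theorem: it is stated in the preliminaries with a citation to Bellman and no proof is given, so there is nothing to compare your argument against. Your observation about the typo in the lower limit of integration is also accurate.
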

\label{preliminaries:section:gronwall}
\end{subsection}

%
%
\begin{subsection}{Schwarzian derivatives}
The Schwarzian derivative of a smooth function $f: \mathbb{R} \to \mathbb{R}$ 
is
\begin{equation}
\{f,t\} = \frac{f'''(t)}{f'(t)} - \frac{3}{2} \left(\frac{f''(t)}{f'(t)}\right)^2.
\end{equation}
If the function $x(t)$ is a diffeomorphism of the real line (that is, a smooth,
invertible mapping $\mathbb{R} \to \mathbb{R}$),
then the Schwarzian derivative of $x(t)$
can be related to the Schwarzian derivative of its inverse $t(x)$;  in particular,
\begin{equation}
\{x,t\} = - \left(\frac{dx}{dt}\right)^2\{t,x\}.
\label{preliminaries:Schwarzian_derivative:change_of_vars}
\end{equation}
The identity~(\ref{preliminaries:Schwarzian_derivative:change_of_vars})
can be found, for instance, in Section 1.13 of \cite{NISTHandbook}.

\label{section:preliminaries:schwarzian_derivative}
\end{subsection}

\label{section:preliminaries}
\end{section}

 \begin{section}{Integral equation formulation}

In this section, we reformulate Kummer's equation
\begin{equation}
\left(\alpha'(t)\right)^2 = \lambda^2q(t) - \frac{1}{2}\frac{\alpha'''(t)}{\alpha'(t)}
+ \frac{3}{4} \left(\frac{\alpha''(t)}{\alpha'(t)}\right)^2
\label{inteq:kummers_equation}
\end{equation}
as a nonlinear integral equation.   As in the introduction, we 
assume that the function $q$ has been extended to the real line
and we seek a function $\alpha$ which satisfies (\ref{inteq:kummers_equation})
on the real line.

By letting
\begin{equation}
\left(\alpha'(t)\right)^2 = \lambda^2 \exp(r(t))
\end{equation}
in (\ref{inteq:kummers_equation}), we obtain  the equation
\begin{equation}
r''(t) - \frac{1}{4}\left(r'(t)\right)^2 + 4 \lambda^2\left( \exp(r(t)) - q(t)\right) = 0
\ \ \ \mbox{for all}\ \ \ t\in\mathbb{R}.
\label{inteq:kummer_logarithm_form}
\end{equation}
We next take $r$ to be of the form
\begin{equation}
r(t) = \log(q(t)) + \delta(t),
\end{equation}
which results in 
\begin{equation}
\delta''(t) - \frac{1}{2}\frac{q'(t)}{q(t)}\delta'(t) 
-\frac{1}{4}\left(\delta'(t)\right)^2
+ 4\lambda^2 
q(t) \left(\exp(\delta(t))-1\right) 
=
q(t) p(t),
\ \ \ \mbox{for all}\ \ \ t\in\mathbb{R},
\label{inteq:delta_equation}
\end{equation}
where $p$ is defined by the formula
\begin{equation}
p(t) = \frac{1}{q(t)} \left(
\frac{5}{4} \left(\frac{q'(t)}{q(t)}\right)^2
-\frac{q''(t)}{q(t)} \right).
\label{inteq:definition_of_p}
\end{equation}
Note that the function $p$ appears in the standard
error analysis of WKB approximations (see, for instance, \cite{Olver}).
Expanding the exponential in a power series and rearranging terms yields
the equation
\begin{equation}
\delta''(t) - \frac{1}{2}\frac{q'(t)}{q(t)}\delta'(t) + 4 \lambda^2 q(t) \delta(t)
- \frac{1}{4} \left(\delta'(t)\right)^2 + 4\lambda^2 q(t)
 \left(\frac{\left(\delta(t)\right)^2}{2} + 
\frac{\left(\delta(t)\right)^3}{3!} + \cdots \right) 
= q(t) p(t).
\label{inteq:differential}
\end{equation}
 Applying the change of variables
\begin{equation}
x(t) = \int_0^t \sqrt{q(u)}\ du
\label{inteq:changeofvars}
\end{equation}
transforms (\ref{inteq:differential}) into
\begin{equation}
\delta''(x) + 4\lambda^2 \delta(x) 
-\frac{1}{4} \left(\delta'(x)\right)^2
+ 4\lambda^2 \left(\frac{\left(\delta(x)\right)^2}{2} + 
\frac{\left(\delta(x)\right)^3}{3!} + \cdots \right) = p(x)
\ \ \ \mbox{for all}\ \ \ x \in \mathbb{R}.
\label{inteq:differential_changeofvars}
\end{equation}

At first glance, the relationship between
the function $p(x)$ appearing in (\ref{inteq:differential_changeofvars})
and the coefficient $q(t)$ in the ordinary differential equation
(\ref{introduction:original_equation}) is complex.
However,  the function $p(t)$ defined via (\ref{inteq:definition_of_p}) is related
to the Schwarzian derivative  (see Section~\ref{section:preliminaries:schwarzian_derivative}) 
of the function $x(t)$ defined in (\ref{inteq:changeofvars}) via
the formula
\begin{equation}
p(t) = - \frac{2}{q(t)} \left\{x,t\right\}
=
-2 \left(\frac{dt}{dx}\right)^2 \left\{x,t\right\}.
\label{inteq:pschwarz}
\end{equation}
It follows from (\ref{inteq:pschwarz}) and 
Formula~(\ref{preliminaries:Schwarzian_derivative:change_of_vars}) 
in Section~\ref{section:preliminaries:schwarzian_derivative}
that 
\begin{equation}
p(x) = 2 \left\{t,x\right\}.
\label{inteq:pschwarz2}
\end{equation}
That is to say: $p$, when viewed as a function of $x$, is simply
twice the Schwarzian derivative of $t$ with respect to $x$.

It is also notable that the part of (\ref{inteq:differential_changeofvars}) 
which is linear in $\delta$ is a constant coefficient  Helmholtz equation.  
This suggests that  we form an  integral equation for (\ref{inteq:differential_changeofvars})
using a Green's function for the Helmholtz equation.
To that end, we define the integral operator
$T$  
for functions $f \in \Lp{1}$ via the formula
\begin{equation}
\begin{aligned}
\OpT{f}(x) &= 
\frac{1}{4\lambda}
\int_{-\infty}^\infty
\sin\left(2 \lambda \left|x-y\right|\right) f(y)\ dy
\end{aligned}
\label{inteq:definition_of_T}
\end{equation}
The following theorem summarizes the relevant properties of the operator
$T$.  
%
\vskip 1em
\begin{theorem}
Suppose that $\lambda >0$ is a real number, and that the
operator $T$ is defined as in (\ref{inteq:definition_of_T}).
Suppose also that   $f \in \Lp{1} \cap  C\left(\mathbb{R}\right)$.  Then:
\begin{enumerate}
\item
$\OpT{f}(x)$ is an element of $\C{2}$;
\item
 $\OpT{f}(x)$ is a solution of the  ordinary differential equation
\begin{equation*}
y''(x) + 4 \lambda^2 y(x) = f(x)\ \ \  \mbox{for all}\ x \in \mathbb{R};\ \mbox{and}
\end{equation*}
\item
the Fourier transform of $\OpT{f}(x)$ is the principal value of 
\begin{equation*}
\frac{\widehat{f}(\xi)}{4\lambda^2-\xi^2}
=
\frac{1}{4\lambda}
\left(
\frac{\widehat{f}(\xi)}{2\lambda-\xi}
+
\frac{\widehat{f}(\xi)}{2\lambda+\xi}
\right).
\end{equation*}
\end{enumerate}
\label{inteq:TTheorem}
\end{theorem}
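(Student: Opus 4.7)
The plan for parts (1) and (2) is the standard Green's-function derivation for the Helmholtz operator: split the integral at $y=x$ and differentiate under the integral sign. Writing
\[
\OpT{f}(x) = \frac{1}{4\lambda}\int_{-\infty}^{x}\sin\bigl(2\lambda(x-y)\bigr)f(y)\,dy + \frac{1}{4\lambda}\int_{x}^{\infty}\sin\bigl(2\lambda(y-x)\bigr)f(y)\,dy,
\]
the first application of Leibniz's rule produces no boundary contribution because $\sin(0)=0$, and yields
\[
\OpT{f}'(x) = \frac{1}{2}\int_{-\infty}^{\infty}\sign(x-y)\cos\bigl(2\lambda(x-y)\bigr) f(y)\,dy.
\]
The second differentiation does produce boundary terms $\tfrac{1}{2}\cos(0)f(x) = \tfrac{1}{2}f(x)$ from each of the two pieces, together with $-4\lambda^{2}\OpT{f}(x)$ from differentiating the cosines, which combines to give the inhomogeneous Helmholtz identity $\OpT{f}''(x) + 4\lambda^{2}\OpT{f}(x) = f(x)$ asserted in (2). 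Continuity of $\OpT{f}$, $\OpT{f}'$, and $\OpT{f}''$ in $x$ follows at each stage from dominated convergence, since the integrands are pointwise dominated by $|f(y)| \in \Lp{1}$ and depend continuously on $x$ for each $y$. This simultaneously establishes (1).

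For part (3) the plan is to recognize $\OpT{f}$ as the convolution $k * f$ with kernel $k(x) = \sin(2\lambda|x|)/(4\lambda)$, and to compute $\widehat{k}$ directly as a tempered distribution. The identity
\[
\sin(2\lambda|x|) = \sign(x)\sin(2\lambda x) = \frac{1}{2i}\sign(x)\bigl(e^{2i\lambda x} - e^{-2i\lambda x}\bigr),
\]
combined with the classical distributional transform of $\sign$ and the modulation rule, expresses $\widehat{\sin(2\lambda|\cdot|)}$ as a difference of two translated principal-value distributions centered at $\xi = \pm 2\lambda$, which a common-denominator calculation collapses to
\[
\widehat{k}(\xi) = \operatorname{PV}\frac{1}{4\lambda^{2}-\xi^{2}}.
\]
Since $f \in \Lp{1}$, $\widehat{f}$ is a bounded continuous function, so the tempered-distribution product $\widehat{k}\,\widehat{f}$ is well-defined and equals $\widehat{\OpT{f}}$ by the convolution identity. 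The partial-fraction form stated in (3) is then the algebraic rearrangement $1/(4\lambda^{2}-\xi^{2}) = \tfrac{1}{4\lambda}\bigl(1/(2\lambda-\xi) + 1/(2\lambda+\xi)\bigr)$.

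The main obstacle is the interpretation at the poles $\xi = \pm 2\lambda$, where $\widehat{k}$ must be understood in the principal-value sense because $k \notin \Lp{1} \cup \Lp{2}$. A tempting shortcut is to bypass the explicit computation of $\widehat{k}$ and instead take the Fourier transform of the ODE in (2) to deduce $(4\lambda^{2}-\xi^{2})\widehat{\OpT{f}}(\xi) = \widehat{f}(\xi)$, but then division by $4\lambda^{2}-\xi^{2}$ is only determined up to tempered distributions supported on $\{\pm 2\lambda\}$, i.e.\ up to linear combinations of $\delta(\xi \mp 2\lambda)$ arising from the homogeneous solutions $e^{\pm 2i\lambda x}$. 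Computing $\widehat{k}$ explicitly from the $\sign$-function representation pins down the correct representative and rules out any extraneous $\delta$-function contributions, which is why I would prefer the direct distributional computation over the ODE-based shortcut.
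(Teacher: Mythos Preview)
Your proof is correct and follows essentially the same approach as the paper: differentiate the split convolution under the integral sign (justified by dominated convergence) for parts (1)--(2), and identify the kernel's distributional Fourier transform via the known transform pair between $\sign(x)$ and the principal value of $1/\xi$ for part (3). The only cosmetic difference is directional---the paper inverse-transforms the principal-value multiplier $\tfrac{1}{4\lambda}\bigl(\tfrac{1}{2\lambda-\xi}+\tfrac{1}{2\lambda+\xi}\bigr)$ and verifies that the result is $\tfrac{1}{4\lambda}\sin(2\lambda|x|)$, whereas you forward-transform the kernel.
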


%
\begin{proof}
We observe that
\begin{equation}
\begin{aligned}
\OpT{f}(x) 
= 
&\frac{1}{4\lambda}
\int_{-\infty}^x
\sin\left(2 \lambda \left(x-y\right)\right) f(y)\ dy
+
\frac{1}{4\lambda}
\int_{x}^{\infty}
\sin\left(2 \lambda (y-x)\right) f(y)\ dy
\\
=
&
\frac{1}{4\lambda}
\sin(2\lambda x)
\int_{-\infty}^x
 \cos(2\lambda y) f(y)\ dy
-
\frac{1}{4\lambda}
\cos(2\lambda x)
 \int_{-\infty}^x
\sin(2\lambda y) f(y)\ dy 
\\
+&
\frac{1}{4\lambda}
\cos(2\lambda x)
\int_{x}^{\infty}
\sin(2\lambda y ) 
f(y)\ dy
-
\frac{1}{4\lambda}
\sin(2\lambda x)
\int_{x}^{\infty}
\cos(2\lambda y)
f(y)\ dy
\end{aligned}
\label{inteq:theorem:1}
\end{equation}
for all $x \in \mathbb{R}$.   
We differentiate both sides of (\ref{inteq:theorem:1}) with respect to $x$,
apply the Lebesgue dominated convergence theorem to each integral
(this is permissible since the sine and cosine functions are bounded
and $f \in \Lp{1}$) and combine terms in order to conclude that
 $\OpT{f}$ is differentiable  everywhere and
\begin{equation}
\begin{aligned}
\frac{d}{dx} 
\OpT{f}(x) 
=
&=
\frac{1}{2}
\int_{-\infty}^{\infty}
\cos\left(2 \lambda \left|x-y\right|\right) \sign(x-y) f(y)\ dy
\end{aligned}
\label{inteq:theorem:2}
\end{equation}
for all $x \in \mathbb{R}$.   In the same fashion, we conclude that
\begin{equation}
\begin{aligned}
\left(\frac{d}{dx}\right)^2
\OpT{f}(x) 
&=
f(x)
-\lambda
\int_{-\infty}^{\infty}
\sin\left(2 \lambda \left|x-y\right|\right) f(y)\ dy
\end{aligned}
\label{inteq:theorem:3}
\end{equation}
for all $x \in \mathbb{R}$.  
Since $f$ is continuous by assumption and the second term
appearing on the right-hand side in (\ref{inteq:theorem:3})
is a continuous function of $x$ by the Lebesgue
dominated convergence theorem, we see from  (\ref{inteq:theorem:3})
that $\OpT{f}$ is twice continuously differentiable.
By combining (\ref{inteq:theorem:3})  and  (\ref{inteq:definition_of_T}),
we conclude that  $\OpT{f}$ is a solution of the ordinary differential
equation
\begin{equation}
y''(x) + 4 \lambda^2 y(x) = f(x)\ \ \  \mbox{for all}\ x \in \mathbb{R}.
\end{equation}
%

We now define the function $g$ through the formula
\begin{equation}
\widehat{g}(\xi) = 
\frac{1}{4\lambda}\left(\frac{1}{2\lambda - \xi }  + \frac{1}{2\lambda + \xi}\right).
\label{inteq:TTheorem:4}
\end{equation}
It is well known that the Fourier transform of the principal value
of $1/x$ is the function 
\begin{equation}
 -i \pi \sign(x);
\end{equation}
see, for instance, \cite{Stein-Weiss} or \cite{GrafakosC}.  
It follows readily  that the inverse Fourier transform
of the principal value of 
\begin{equation}
 \frac{1}{2\lambda \pm \xi}
\end{equation}
is
\begin{equation}
\pm \frac{1}{2i} \exp\left(\mp 2\lambda i x\right)\sign(x).
\end{equation}
From this and (\ref{inteq:TTheorem:4}), we conclude that
%
\begin{equation}
\begin{aligned}
g(x)
&= 
\frac{1}{4\lambda}\left(
 \frac{1}{2i} \exp\left(-2\lambda i x\right)\sign(x)
-\frac{1}{2i} \exp\left(2\lambda i x\right)\sign(x)\right)
\\
&=
\frac{1}{4\lambda} \sin\left(2\lambda|x|\right).
\end{aligned}
\label{inteq:Ttheorem:5}
\end{equation}
In particular, $\OpT{f}$ is the convolution of $f$ with $g$.
As a consequence, 
\begin{equation}
\widehat{\OpT{f}}(\xi) = \widehat{g}(\xi) \widehat{f}(\xi) = 
\frac{1}{4\lambda}
\left(
\frac{\widehat{f}(\xi)}{2\lambda-\xi}
+
\frac{\widehat{f}(\xi)}{2\lambda+\xi}
\right),
\end{equation}
which is the third and final conclusion of the theorem.
\end{proof}

In light of Theorem~\ref{inteq:TTheorem}, it is clear that introducing the
representation
\begin{equation}
\delta(x) = \OpT{\sigma}(x)
\end{equation}
into (\ref{inteq:differential_changeofvars}) yields the nonlinear integral
equation
\begin{equation}
\sigma(x)  =
\OpS{\OpT{\sigma}}(x) + p(x)
\ \ \ \mbox{for all}\ \ x\in\mathbb{R},
\label{inteq:integral_equation}
\end{equation}
where $S$ is the operator defined for functions $f \in C^1\left(\mathbb{R}\right)$ by the
formula
\begin{equation}
\begin{aligned}
\OpS{f}(x) 
&= 
\frac{\left(f'(x)\right)^2}{4}
- 4 \lambda^2 \left(\frac{\left(f(x)\right)^2}{2!} + 
\frac{\left(f(x)\right)^3}{3!} + \frac{\left(f(x)\right)^4}{4!}+ \cdots \right).
\end{aligned}
\label{inteq:definition_of_S}
\end{equation}

The following theorem is immediately apparent from the  procedure 
used to transform Kummer's equation (\ref{inteq:kummers_equation}) into
the nonlinear integral equation (\ref{inteq:integral_equation}).
\vskip 1em
\begin{theorem}
Suppose that $\lambda > 0$ is a real number, that $q:\mathbb{R} \to \mathbb{R}$
is an infinitely differentiable, strictly positive
function,
that $x(t)$ is  defined by (\ref{inteq:changeofvars}), and that $p(x)$ is defined
via (\ref{inteq:pschwarz2}).  
Suppose also that  $\sigma \in \Lp{1} \cap C\left(\mathbb{R}\right)$  is a 
solution of the integral 
equation (\ref{inteq:integral_equation}), that
$\delta$ is defined via the formula
\begin{equation}
\delta(x) = \OpT{\sigma}(x) 
= \frac{1}{4\lambda} \int_{-\infty}^{\infty}
\sin\left(2\lambda\left|x-y\right|\right) \sigma(y)\ dy,
\label{inteq:theorem:delta}
\end{equation}
and that the function $\alpha$ is defined by the formula
\begin{equation}
\alpha(t) = \lambda \int_0^t  \sqrt{q(u) }
\exp\left(\frac{\delta(x(u))}{2}\right)\ du.
\label{inteq:theorem:alpha}
\end{equation}

Then:  

\begin{enumerate}
\item
$\delta(x)$ is a twice continuously differentiable solution
of (\ref{inteq:differential_changeofvars});

\item
$\delta(x(t))$  is a twice continuously differentiable solution of of (\ref{inteq:differential});
\item
$\alpha(t)$ is three times continuously differentiable solution of 
(\ref{inteq:kummers_equation}); and
\item
 $\alpha(t)$  is a phase function for the ordinary differential equation
\begin{equation}
y''(t) + \lambda^2 q(t) y(t) = 0\ \ \ \mbox{for all}\ \ 0 \leq t \leq 1.
\end{equation}
\end{enumerate}
\label{inteq:theorem}
\end{theorem}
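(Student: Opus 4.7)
The overall plan is to reverse the chain of substitutions and changes of variable that was used to derive the integral equation (\ref{inteq:integral_equation}) from Kummer's equation (\ref{inteq:kummers_equation}). Each step of that derivation is reversible under mild smoothness assumptions on $\sigma$ and $q$, so the four conclusions follow essentially by unpacking definitions, with the classical identity between Kummer's equation and the ODE $y'' + \lambda^2 q y = 0$ as the final ingredient.

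For conclusion (1), I would apply Theorem~\ref{inteq:TTheorem} directly: since $\sigma \in \Lp{1} \cap C(\mathbb{R})$, the function $\delta = \OpT{\sigma}$ lies in $\C{2}$ and satisfies $\delta''(x) + 4\lambda^2 \delta(x) = \sigma(x)$. Substituting the integral equation $\sigma = \OpS{\OpT{\sigma}} + p = \OpS{\delta} + p$ into this ODE gives
\begin{equation}
\delta''(x) + 4\lambda^2 \delta(x) - \frac{(\delta'(x))^2}{4} + 4\lambda^2\left(\frac{\delta(x)^2}{2!} + \frac{\delta(x)^3}{3!} + \cdots\right) = p(x),
\end{equation}
which is (\ref{inteq:differential_changeofvars}). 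For conclusion (2), I invoke the fact that $x(t) = \int_0^t \sqrt{q(u)}\,du$ is a $C^\infty$ diffeomorphism of $\mathbb{R}$ onto $\mathbb{R}$ (since $q>0$ and smooth), so composition with $x(t)$ preserves $C^2$ regularity, and a chain-rule computation (the same one used in the introduction to pass from (\ref{inteq:differential}) to (\ref{inteq:differential_changeofvars})) shows that $\delta(x(t))$ satisfies (\ref{inteq:differential}), using the identification (\ref{inteq:pschwarz}) of $p$ as a multiple of the Schwarzian of $x$.

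For conclusion (3), set $r(t) = \log(q(t)) + \delta(x(t))$. Expanding $\exp(r(t)) = q(t)\exp(\delta(x(t)))$ in power series in $\delta$ and simplifying, equation (\ref{inteq:differential}) is exactly the equation (\ref{inteq:kummer_logarithm_form}) satisfied by $r$. Then define $\alpha$ by (\ref{inteq:theorem:alpha}); this is precisely the condition $(\alpha'(t))^2 = \lambda^2 \exp(r(t))$. Since $r \in C^2$ and $q>0$, $\alpha'$ is $C^2$ and strictly positive, so $\alpha \in C^3$. Differentiating $(\alpha')^2 = \lambda^2 \exp(r)$, taking logarithms and computing $\alpha''/\alpha'$, $\alpha'''/\alpha'$ in terms of $r'$, $r''$, and substituting into (\ref{inteq:kummer_logarithm_form}) yields (\ref{inteq:kummers_equation}); this is the inverse of the logarithmic substitution and is a routine algebraic manipulation.

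For conclusion (4), I would verify by direct differentiation that $u(t) = \cos(\alpha(t))/\sqrt{\alpha'(t)}$ and $v(t) = \sin(\alpha(t))/\sqrt{\alpha'(t)}$ solve $y'' + \lambda^2 q y = 0$, using Kummer's equation (\ref{inteq:kummers_equation}) to cancel terms involving $\alpha''$ and $\alpha'''$; the Wronskian $u v' - v u' \equiv 1$ is immediate, so $u,v$ are linearly independent. The main conceptual obstacle is conclusion (3): although the substitutions are formally inverses of the ones in Section~\ref{section:preliminaries}, I must check that the regularity is actually sufficient to differentiate three times and that the power-series expansion of $\exp(\delta)$ in passing between (\ref{inteq:delta_equation}) and (\ref{inteq:differential}) converges and may be rearranged termwise. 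This is harmless because $\delta$ and $\delta(x(t))$ are $C^2$ with finite sup norm on any compact set, so the series converges absolutely and uniformly on compact sets, justifying the manipulations.
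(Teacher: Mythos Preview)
Your proposal is correct and is exactly the approach the paper intends: the paper offers no proof at all, stating only that the theorem ``is immediately apparent from the procedure used to transform Kummer's equation (\ref{inteq:kummers_equation}) into the nonlinear integral equation (\ref{inteq:integral_equation}).'' You have simply written out that reversal of substitutions in detail, using Theorem~\ref{inteq:TTheorem} for the regularity of $\delta$ and the standard Kummer-phase correspondence for part (4). One minor slip: your parenthetical reference to ``Section~\ref{section:preliminaries}'' should point to Section~\ref{section:inteq}, where the chain of substitutions actually appears.
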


\label{section:inteq}
\end{section}

\begin{section}{Overview and statement of the principal result}

The composition operator $S\circ T$ appearing 
in (\ref{inteq:integral_equation}) does not map any Lebesgue space
$\Lp{p}$  or H\"older space $C^{k,\alpha}(\mathbb{R})$ to itself, which
complicates  the analysis of (\ref{inteq:integral_equation}).
Moreover, the integral  defining  $\OpT{\sigma}$ only exists if either $\sigma \in \Lp{1}$ or 
$\widehat{\sigma}(\pm 2\lambda) = 0$.  
Even if both of these  conditions are satisfied, it is not necessarily
the case that  $\OpS{\OpT{\sigma}}  + p$ will satisfy either condition.
This casts doubts on whether (\ref{inteq:integral_equation})
is solvable for arbitrary $p$.

We avoid a detailed discussion of which spaces and in what sense
 (\ref{inteq:integral_equation}) admits solutions
and instead show that, under mild conditions on $p$ and $\lambda$, there exists a function
$p_b$ which   approximates $p$ and such that the equation
\begin{equation}
\sigma(x)  =
\OpS{\OpT{\sigma}}(x) + p_b(x)
\ \ \ \mbox{for all}\ \ x\in\mathbb{R}
\label{overview:inteq}
\end{equation}
admits a solution.    
Moreover, we prove that if $p$ is nonoscillatory
then the solution of (\ref{overview:inteq}) 
is also nonoscillatory
 and $\|p-p_b\|_\infty$ decays exponentially in $\lambda$.  
The next theorem, which is the principal result of this article, makes
these statements precise.  Its proof is given in Sections~\ref{section:bandlimit},
\ref{section:bandlimit:solution}, \ref{section:spectrum} and \ref{section:solution}.

%
%

\vskip 1em
\begin{theorem}
Suppose that  $q \in C^\infty\left(\mathbb{R}\right)$ is a strictly positive,
and that $x(t)$ is defined by the formula
\begin{equation}
x(t) = \int_0^t \sqrt{q(u)}\ du.
\label{overview:x}
\end{equation}
Suppose also that $p(x)$ is defined via the formula 
\begin{equation}
p(x) = 2\{t,x\};
\end{equation}
that is, $p(x)$ is twice the Schwarzian derivative of the
variable $t$ with respect to the variable $x$ defined via
 (\ref{overview:x}).
Suppose furthermore that there exist real numbers
$\lambda>0$, $\Gamma >0$ and $a >0$ such that
\begin{equation}
\lambda \geq 6 \max\left\{\frac{1}{a},\Gamma\right\}
\end{equation}
and
\begin{equation}
\left|\widehat{p}(\xi)\right| \leq 
\Gamma \exp\left(-a\left|\xi\right|\right)
\ \ \ \mbox{for all}\ \ \xi\in\mathbb{R}.
\label{over:condition_on_phat}
\end{equation}
Then there exist functions $p_b\in C^{\infty}\left(\mathbb{R}\right)$
and $\sigma_b \in \Lp{2} \cap C^{\infty}\left(\mathbb{R}\right)$
such that
$\sigma_b$ is a solution of (\ref{overview:inteq}),
\begin{equation}
\left|\widehat{\sigma_b}(\xi)\right| \leq 
2\Gamma
\exp\left(-\frac{5}{6}a|\xi|\right) 
\ \ \ \mbox{for all}\ \ \xi\in\mathbb{R},
\end{equation}
and
\begin{equation}
\|p-p_b\|_\infty < \frac{24 \Gamma}{5a} \exp\left(-\frac{5}{6} a \lambda\right).
\end{equation}
\label{main_theorem}
\end{theorem}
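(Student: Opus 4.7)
My plan is to introduce a bandlimited modification $p_b$ of $p$, reformulate the integral equation on a suitable Banach space of functions with exponentially decaying Fourier transform, and apply the contraction mapping principle (Theorem~\ref{preliminaries:frechet:cmp}).

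First I would construct $p_b$ by truncating $\widehat{p}$ to the interval $[-\tfrac{5}{6}\lambda,\tfrac{5}{6}\lambda]$. The hypothesis $\lambda \ge 6/a$ and the bound $|\widehat{p}(\xi)|\le\Gamma\exp(-a|\xi|)$ give
\begin{equation*}
\|p-p_b\|_\infty\le\tfrac{1}{\pi}\int_{5\lambda/6}^{\infty}\Gamma\exp(-a\xi)\,d\xi=\tfrac{\Gamma}{\pi a}\exp\bigl(-\tfrac{5}{6}a\lambda\bigr)<\tfrac{24\Gamma}{5a}\exp\bigl(-\tfrac{5}{6}a\lambda\bigr),
\end{equation*}
which is the stated $L^\infty$ estimate. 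The structural feature of this choice is that $\widehat{p_b}$ vanishes in an open neighborhood of the singular frequencies $\pm 2\lambda$ of the Fourier multiplier $1/(4\lambda^2-\xi^2)$ associated with $T$.

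Next I would define the closed set
\begin{equation*}
D=\Bigl\{\sigma\in L^2(\mathbb{R}):\,|\widehat{\sigma}(\xi)|\le 2\Gamma\exp\bigl(-\tfrac{5}{6}a|\xi|\bigr)\text{ for all }\xi\in\mathbb{R}\Bigr\}
\end{equation*}
inside an appropriate weighted Fourier-analytic Banach space, and show that the map $\Phi[\sigma]=S\circ T[\sigma]+p_b$ sends $D$ into itself contractively. Three ingredients enter. First, elements of $D$ have Fourier transform with exponential decay at $\pm 2\lambda$, so $\widehat{T[\sigma]}(\xi)=\widehat{\sigma}(\xi)/(4\lambda^2-\xi^2)$ is well-defined and retains exponential decay. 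Second, since the Fourier multiplier of $T$ is of order $\lambda^{-2}$, the $L^\infty$ norm of $T[\sigma]$ for $\sigma\in D$ is itself small in $\lambda$. Third, $S[f]=(f')^2/4-4\lambda^2\sum_{k\ge 2}f^k/k!$ is at least quadratic in $f$, so on a small ball $S$ has small Lipschitz constant; after composition with $T$ the resulting Lipschitz constant becomes strictly less than $1$ under the hypothesis $\lambda\ge 6\max\{1/a,\Gamma\}$. I would compute the Fr\'echet derivative of $\Phi$ on $D$ and apply Theorem~\ref{preliminaries:frechet:mvt} to certify the contraction, then invoke Theorem~\ref{preliminaries:frechet:cmp} to obtain a unique fixed point $\sigma_b\in D$.

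The main obstacle is the bookkeeping for the nonlinear operator $S\circ T$ on the Fourier-weighted space. The Fourier transform of $(T[\sigma])^k$ is a $k$-fold convolution of $\widehat{T[\sigma]}$ with itself, which behaves nicely in Wiener-algebra-style norms but demands precise control of the weight-exchange factor when passing from the native decay rate $a$ of $p$ to the working rate $\tfrac{5}{6}a$. Each convolution contributes a bounded sub-multiplicativity constant, and the series in $k$ converges geometrically because the ratio $4\lambda^2\|T[\sigma]\|_\infty/k$ is small. The choice of exponent $\tfrac{5}{6}a$ leaves a wedge $\tfrac{a}{6}$ of headroom that absorbs these convolution losses and simultaneously keeps the tail error on $p_b$ within the claimed constant $24\Gamma/(5a)$. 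Once the fixed point is produced, the decay bound on $\widehat{\sigma_b}$ is immediate from membership in $D$, and $\sigma_b\in C^\infty\cap L^2$ follows from the exponential decay of its Fourier transform. The argument is split in the paper across four sections corresponding respectively to the bandlimited cutoff, the solution of the bandlimited equation, the spectral properties of this solution, and the assembly of the full conclusion.
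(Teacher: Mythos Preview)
Your proposal has a genuine gap at the point where you apply the unmodified operator $T$ to elements of your set $D$. You write that because elements of $D$ have ``Fourier transform with exponential decay at $\pm 2\lambda$'', the function $\widehat{T[\sigma]}(\xi)=\widehat{\sigma}(\xi)/(4\lambda^2-\xi^2)$ is well-defined and retains exponential decay. This is not so: exponential decay makes $\widehat{\sigma}(\pm 2\lambda)$ \emph{small} but not zero, so $\widehat{\sigma}(\xi)/(4\lambda^2-\xi^2)$ has genuine poles there. As Theorem~\ref{inteq:TTheorem} shows, $\widehat{T[\sigma]}$ is only the \emph{principal value} of that quotient; correspondingly, $T[\sigma]$ contains a term proportional to $\widehat{\sigma}(\pm 2\lambda)\,\mathrm{sign}(x)\exp(\pm 2i\lambda x)$, which does not decay at infinity. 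Feeding this into the nonlinear operator $S$ produces powers and cross terms whose Fourier transforms contain Dirac masses and shifted principal-value pieces, so $\Phi[\sigma]=S\circ T[\sigma]+p_b$ does not land back in $D$ (or in any space of functions with pointwise-bounded Fourier transform). Your band-limiting of $p$ alone does nothing to prevent this, since the obstruction comes from the image $\Phi(D)$, not from the inhomogeneity.

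The paper deals with exactly this difficulty by band-limiting the \emph{operator} rather than the data: it replaces $T$ by $T_b$, whose multiplier $b(\xi)/(4\lambda^2-\xi^2)$ is smooth and compactly supported, solves $\sigma=S\circ T_b[\sigma]+p$ with the \emph{original} $p$ via a contraction in $L^1(\mathbb{R})$ on the Fourier side (Section~\ref{section:bandlimit:solution}), and only afterwards derives the pointwise exponential decay of $\widehat{\sigma}$ by induction on the iterates (Section~\ref{section:spectrum}). The perturbation $p_b$ is then \emph{defined a posteriori} by $p_b=p+\sigma_b-\sigma$, where $\widehat{\sigma_b}=b\,\widehat{\sigma}$, using the identity $T_b[\sigma]=T[\sigma_b]$ to convert the $T_b$-equation into a $T$-equation (Section~\ref{section:solution}). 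Thus the order of operations is reversed from yours: first regularise the operator and solve, then read off $p_b$ from the solution; your attempt to prescribe $p_b$ in advance and work with the singular $T$ does not close.
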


According to Theorem~\ref{inteq:theorem}, if $\sigma$ is a solution 
of the integral equation (\ref{inteq:integral_equation}),
then the function $\alpha$ defined by  (\ref{inteq:theorem:alpha}) is a 
phase function for the differential equation (\ref{introduction:original_equation}).
We define $\alpha_b$ in analogy with (\ref{inteq:theorem:alpha}) using the solution $\sigma_b$ of
the modified equation (\ref{overview:inteq}).
That is, we let $\delta_b$ be the function defined by the formula
\begin{equation}
\delta_b(x) = \OpT{\sigma_b}(x) = \frac{1}{4\lambda} \int_{-\infty}^{\infty} 
\sin(2\lambda|x-y|) \sigma_b(y)\ dy,
\end{equation}
and then define $\alpha_b$ via
\begin{equation}
\alpha_b(t) = \lambda \int_0^t  \sqrt{q(u) }
\exp\left(\frac{\delta_b(x(u))}{2}\right)\ du.
\end{equation}
Obviously, $\alpha_b$ is not a phase function for the equation
(\ref{introduction:original_equation}).  However, if
we define  $q_b:\mathbb{R}\to\mathbb{R}$ by the formula
\begin{equation}
 \frac{1}{q_b(t)} \left(
\frac{5}{4} \left(\frac{q_b'(t)}{q_b(t)}\right)^2
-\frac{q_b''(t)}{q_b(t)} \right) = p_b(t),
\label{overview:q0p0}
\end{equation}
then Theorem~\ref{inteq:theorem}
implies that 
$\alpha_b$ is a phase function for the differential equation
\begin{equation}
y''(t) + \lambda^2 q_b(t) y(t) = 0\ \ \ \mbox{for all}\ \ 0 \leq t \leq 1.
\label{overview:approximate_equation}
\end{equation}
Since  $\|q-q_b\|_\infty$ is bounded in terms of 
$\|p-p_b\|_\infty$
and the solutions of (\ref{overview:approximate_equation})
closely approximate those of  (\ref{introduction:original_equation}) 
when $\|q-q_b\|_\infty$ is small, the function $\alpha_b$
can be used to approximation solutions of 
(\ref{introduction:original_equation}) 
when $\|p-p_b\|_\infty$ is small.  

Theorem~\ref{overview:backwards}, which appears below, gives a relevant error
estimate.  Given $\epsilon >0$, it specifies a bound on $\|p-p_b\|_\infty$ 
which ensures that solutions of (\ref{overview:approximate_equation})
approximate those of (\ref{introduction:original_equation}) to relative
precision $\epsilon$.  Its proof appears in Section~\ref{section:backwards}.

\vskip 1em
\begin{definition}
We say that $\alpha$ is an $\epsilon$-approximate
phase function for the ordinary differential equation (\ref{introduction:original_equation})
if there exists a basis of solutions $\left\{\widetilde{u},\widetilde{v}\right\}$ of
(\ref{introduction:original_equation}) such that
\begin{equation}
\left|u(t) - \widetilde{u}(t)\right| \leq
\epsilon \sup_{0 \leq t \leq 1} \left|\widetilde{u}(t)\right|
\ \ \
\mbox{for all} \ \ 0 \leq t \leq 1
\end{equation}
and
\begin{equation}
\left|v(t) - \widetilde{v}(t)\right| \leq
\epsilon \sup_{0 \leq t \leq 1} \left|\widetilde{v}(t)\right|
\ \ \
\mbox{for all} \ \ 0 \leq t \leq 1,
\end{equation}
where $u$, $v$ are defined by  
%
\begin{equation}
u(t) = \frac{\sin(\alpha(t)) }{\left|\alpha'(t)\right|^{1/2}}
\end{equation}
and
\begin{equation}
v(t) = \frac{\cos(\alpha(t)) }{\left|\alpha'(t)\right|^{1/2}}.
\end{equation}
\end{definition}

\vskip 1em
\begin{theorem}
Suppose that
$q \in C^\infty\left(\mathbb{R}\right)$ is strictly positive,
 that the function $p$ is defined by the formula (\ref{inteq:definition_of_p}),
and that  there exist real numbers $0 < \eta_1 < \eta_2 $ such that
\begin{equation}
\eta_1 \leq q(t) \leq \eta_2
\ \ \ \mbox{for all} \ \ 0 \leq t \leq 1,
\label{overview:backwards1}
\end{equation}
\begin{equation}
\left|\frac{q'(t)}{q(t)}\right| \leq \eta_2
\ \ \ \mbox{for all}\ \  0 \leq t \leq 1,
\label{overview:backwards2}
\end{equation}
and 
\begin{equation}
\left|p(t)\right| \leq \eta_2
\ \ \ \mbox{for all}\ \  0 \leq t \leq 1.
\label{overview:backwards3}
\end{equation}
Suppose also that  $\lambda>0$, $\epsilon >0$ are real numbers
such that
\begin{equation}
0 < \epsilon < \lambda
\exp\left(\frac{\eta_2^{3/4}}{4}\right),
\label{overview:backwards4}
\end{equation}
and that $k$ is the real number defined by 
\begin{equation}
k=20 \left(\frac{\eta_2}{\eta_1}\right)^2 + 8 \eta_2^2 + 10 \frac{\eta_2}{\eta_1} + 1.
\end{equation}
Suppose furthermore that  $p_b:\mathbb{R} \to \mathbb{R}$
is an infinitely differentiable function such that
\begin{equation}
\|p-p_b\|_\infty
\leq \frac{1}{2}
\frac{\eta_1 }{\lambda}
\exp\left(-k\right)
\exp\left(-\frac{\eta_2^{3/4}}{4}\right)
\epsilon.
\end{equation}
Then there exists a function $q_b:[0,1]\to\mathbb{R}$  such that
(\ref{overview:q0p0}) holds and any phase function for (\ref{overview:approximate_equation})
is an $\epsilon$-approximate phase function for (\ref{introduction:original_equation}).
\label{overview:backwards}
\end{theorem}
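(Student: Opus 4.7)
My plan is to construct $q_b$ explicitly from $p_b$ by inverting the nonlinear differential relation (\ref{overview:q0p0}), to bound $\|q - q_b\|_{\infty}$ in terms of $\|p - p_b\|_{\infty}$ using a Gronwall-type estimate, and finally to convert this into a sup-norm bound on the difference of solutions of the original and perturbed second order equations via a standard energy argument. The main workhorse for the first two steps is the substitution $w(t) := q(t)^{-1/4}$, which reduces the algebraic relation (\ref{inteq:definition_of_p}) to the much simpler second order ordinary differential equation
\[
w''(t) = \frac{p(t)}{4\, w(t)^3}.
\]
Accordingly, I would define $w_b$ as the solution of $w_b''(t) = p_b(t)/(4 w_b(t)^3)$ with matching initial data $w_b(0) = w(0)$, $w_b'(0) = w'(0)$, and then set $q_b(t) := w_b(t)^{-4}$. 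Standard Cauchy theory produces $w_b$ on a maximal interval, and the smallness of $\|p - p_b\|_{\infty}$ guaranteed by the hypothesis allows a continuation-bootstrap argument to show that $w_b$ stays uniformly bounded away from $0$ and $\infty$ on $[0,1]$, so that $q_b \in C^\infty([0,1])$ is strictly positive and satisfies (\ref{overview:q0p0}).

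Setting $W := w - w_b$ and subtracting the two ODEs gives
\[
W''(t) = \frac{p(t) - p_b(t)}{4\, w(t)^3} + \frac{p_b(t)}{4}\cdot\frac{w_b(t)^3 - w(t)^3}{w(t)^3\, w_b(t)^3},
\]
and using the factorization $w_b^3 - w^3 = -W(w^2 + w w_b + w_b^2)$ this becomes a linear ODE for $W$ with bounded coefficients and source $(p-p_b)/(4w^3)$. Recasting as a first order system in $(W, W')$ and applying Gronwall on $[0,1]$ yields an estimate of the form
\[
\|W\|_{\infty} + \|W'\|_{\infty} \leq C_0\, e^{k}\, \|p - p_b\|_{\infty},
\]
where $k = 20(\eta_2/\eta_1)^2 + 8\eta_2^2 + 10\eta_2/\eta_1 + 1$ emerges from carefully tracking the bounds supplied by (\ref{overview:backwards1})--(\ref{overview:backwards3}) through the computation. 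A short algebraic manipulation using $q - q_b = w^{-4} - w_b^{-4}$ and the bounds on $w, w_b$ then yields $\|q - q_b\|_{\infty} \leq C_1\, e^{k}\, \|p - p_b\|_{\infty}$.

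For the final step, let $\alpha$ be any phase function for $y'' + \lambda^2 q_b y = 0$ and set $u(t) := \cos\alpha(t)/\sqrt{|\alpha'(t)|}$ and $v(t) := \sin\alpha(t)/\sqrt{|\alpha'(t)|}$; by construction these form a basis of solutions of the perturbed equation. Let $\widetilde u$ be the unique solution of $y'' + \lambda^2 q y = 0$ with $\widetilde u(0) = u(0)$, $\widetilde u'(0) = u'(0)$, and define $\widetilde v$ analogously. The difference $z := u - \widetilde u$ satisfies $z'' + \lambda^2 q z = \lambda^2 (q - q_b) u$ with $z(0) = z'(0) = 0$, and the energy functional $E(t) := |z'(t)|^2 + \lambda^2 q(t) z(t)^2$ obeys
\[
|E'(t)| \leq \eta_2\, E(t) + 2\lambda^2 \|q - q_b\|_{\infty}\, \|u\|_{\infty}\, \sqrt{E(t)}.
\]
A Gronwall argument on $\sqrt{E}$ then gives $\|u - \widetilde u\|_{\infty} \leq C_2\, \lambda\, \|q - q_b\|_{\infty}\, \|u\|_{\infty}$. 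The factor $\exp(\eta_2^{3/4}/4)$ in the hypothesis is designed to absorb the constants arising from energy conservation when one relates $\|u\|_{\infty}$ to $\|\widetilde u\|_{\infty}$; chaining the three estimates and invoking the bootstrap $\|\widetilde u\|_{\infty} \geq \|u\|_{\infty}/2$ (valid once $\|u - \widetilde u\|_{\infty} < \|u\|_{\infty}/2$, which the hypothesis $\epsilon < \lambda \exp(\eta_2^{3/4}/4)$ ensures through the bound on $\|p - p_b\|_\infty$) produces $\|u - \widetilde u\|_{\infty} \leq \epsilon \|\widetilde u\|_{\infty}$, and similarly for $v, \widetilde v$.

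The main obstacle lies in the second step: reproducing precisely the exponent $k$ in the hypothesis requires careful bookkeeping through the Gronwall estimate, and the continuation-bootstrap argument guaranteeing that $w_b$ remains in a fixed compact subset of $(0,\infty)$ on $[0,1]$ must be verified independently before the Gronwall estimate can be closed. The energy estimate of the third step and the passage from $\|W\|_\infty$ to $\|q - q_b\|_\infty$ are otherwise routine.
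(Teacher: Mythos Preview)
Your overall two-step structure---first invert the map $q\mapsto p$ to produce $q_b$, then run a perturbation estimate on the second-order equation---is exactly the paper's. Your substitution $w=q^{-1/4}$, which linearises the Riccati-type relation to $w''=p/(4w^3)$, is the natural device and almost certainly what underlies the paper's Lemma on the existence of $q_b$ (the paper omits that proof entirely, calling it ``long and technical but entirely elementary'').

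The genuine divergence is in the perturbation step. The paper does \emph{not} use an energy functional. Instead it applies the Liouville--Green transformation: set $\phi(t)=q(t)^{1/4}\psi(t)$ with $\psi=z_0-z$, then change variables via $x(t)=\int_0^t\sqrt{q(u)}\,du$. This turns the difference equation into a constant-coefficient Helmholtz problem $\phi''(x)+\lambda^2\phi(x)=(\text{source})$ on the interval $[0,x_1]$ with $x_1\le\sqrt{\eta_2}$, which is then recast as an integral equation against the kernel $\sin(\lambda(x-y))/\lambda$ and closed by Gronwall. Two things are gained this way that your energy argument does not deliver as cleanly: the $1/\lambda$ in the Green's function cancels one of the two powers of $\lambda$ in the source, and the shortened interval length $\sqrt{\eta_2}$ combined with the $\eta_2/4$ coefficient from the $p\phi$ term is exactly what produces the peculiar exponent $\eta_2^{3/4}/4$ appearing in the hypothesis. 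Your $E(t)=|z'|^2+\lambda^2 q z^2$ approach is perfectly sound and gives the same $\lambda\|q-q_b\|_\infty$ scaling, but the Gronwall will naturally output something like $\exp(C\eta_2)$ rather than $\exp(\eta_2^{3/4}/4)$, so you would prove a variant of the theorem with a different (and for large $\eta_2$ worse) constant.

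A smaller point: your bootstrap to pass from $\|u\|_\infty$ to $\|\widetilde u\|_\infty$ is unnecessary in the paper's argument. Because the paper writes the source as $(q-q_b)(\psi+z)$ and absorbs the $\psi$ piece into the Gronwall coefficient, the final bound comes out directly in terms of $\|z\|_\infty$, i.e.\ the solution of the \emph{original} equation, which is exactly what the definition of $\epsilon$-approximate phase function requires.
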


By combining Theorems~\ref{inteq:theorem}, \ref{main_theorem} and
\ref{overview:backwards} we obtain the following.
%
\vskip 1em
\begin{corollary}
Suppose that $q$, $x(t)$, $p$, $\eta_1$, $\eta_2$, $\lambda$, $\epsilon$, $k$,
$\Gamma$, $a$ and $\eta$ are as in
the hypotheses of Theorem~\ref{main_theorem} and \ref{overview:backwards}.
Suppose that, in addition,  %
\begin{equation}
\lambda \geq
\frac{6}{5a}\left(
k + \frac{\eta_2^{3/4}}{4} + 
 \log\left(
 \frac{12\Gamma}{\eta_1 a^2}
\frac{1}{\epsilon}
\right)
\right).
\label{overview:corollary:1}
\end{equation}
Then there exists a function $\sigma_b \in \Lp{2}\cap C^\infty\left(\mathbb{R}\right)$ 
such that
\begin{equation}
\left|\widehat{\sigma_b}(\xi)\right| \leq 
2\Gamma
\exp\left(-\frac{5}{6}a|\xi|\right) 
\ \ \ \mbox{for all}\ \ \xi \in \mathbb{R}
\end{equation}
and the function $\alpha_b(t)$ defined by 
\begin{equation}
\alpha_b(t) = \lambda \int_0^t  \sqrt{q(u) }
\exp\left(\frac{\delta_b(x(u))}{2}\right)\ du,
\end{equation}
where $\delta_b$ is the function defined through the formula
\begin{equation}
\delta_b(x)  = \frac{1}{4\lambda} \int \sin(2\lambda|x-y|) \sigma_b(y)\ dy,
\end{equation}
is an $\epsilon$-approximate phase function for the ordinary differential
equation (\ref{introduction:original_equation}).
\end{corollary}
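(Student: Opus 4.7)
The plan is to chain together the three preceding theorems in sequence. The corollary follows almost mechanically once one verifies that the additional lower bound (\ref{overview:corollary:1}) on $\lambda$ forces the decay rate furnished by Theorem~\ref{main_theorem} to meet the smallness requirement imposed by Theorem~\ref{overview:backwards}.

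First I would invoke Theorem~\ref{main_theorem}. Its hypotheses are built into those of the corollary, so we obtain $p_b \in C^\infty(\mathbb{R})$ and $\sigma_b \in \Lp{2}\cap C^\infty(\mathbb{R})$ such that $\sigma_b$ solves the modified integral equation (\ref{overview:inteq}), the Fourier bound $|\widehat{\sigma_b}(\xi)| \le 2\Gamma\exp(-\tfrac{5}{6}a|\xi|)$ holds (yielding the first conclusion of the corollary directly), and
$$
\|p-p_b\|_\infty < \frac{24\Gamma}{5a}\exp\!\left(-\tfrac{5}{6}a\lambda\right).
$$
Next I would define $q_b$ implicitly by (\ref{overview:q0p0}) from the $p_b$ just produced, and apply Theorem~\ref{inteq:theorem} with $q_b$ in the role of $q$. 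Since $\sigma_b$ solves (\ref{overview:inteq}) with data $p_b$, the formulas (\ref{inteq:theorem:delta}) and (\ref{inteq:theorem:alpha}) produce a function $\alpha_b$ which is a bona fide phase function for the perturbed equation $y''+\lambda^2 q_b y=0$.

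Finally I would apply Theorem~\ref{overview:backwards} to transfer this phase function over to the original equation. That theorem requires
$$
\|p-p_b\|_\infty \;\le\; \tfrac{1}{2}\,\frac{\eta_1}{\lambda}\exp(-k)\exp\!\left(-\tfrac{\eta_2^{3/4}}{4}\right)\epsilon,
$$
and combining this with the bound from Theorem~\ref{main_theorem} reduces the corollary to the single inequality
$$
\frac{24\Gamma}{5a}\exp\!\left(-\tfrac{5}{6}a\lambda\right) \;\le\; \tfrac{1}{2}\,\frac{\eta_1}{\lambda}\exp(-k)\exp\!\left(-\tfrac{\eta_2^{3/4}}{4}\right)\epsilon.
$$
Taking logarithms of both sides and solving for $\lambda$ produces a transcendental bound of the form $\tfrac{5a\lambda}{6}\ge k+\tfrac{\eta_2^{3/4}}{4}+\log\!\bigl(\tfrac{48\Gamma\lambda}{5a\eta_1\epsilon}\bigr)$, which I would dominate by (\ref{overview:corollary:1}) using the standing lower bounds $\lambda\ge 6/a$ and $\lambda\ge 6\Gamma$ imported from Theorem~\ref{main_theorem} to absorb the residual $\log\lambda$ factor into the constant inside the logarithm.

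The one delicate point is this final calibration of constants: the stated form of (\ref{overview:corollary:1}) does not explicitly carry a $\log\lambda$ term, so I would need to verify carefully that the discrepancy $\log(48\Gamma\lambda/(5a\eta_1\epsilon)) - \log(12\Gamma/(\eta_1 a^2\epsilon)) = \log(4a\lambda/5)$ is comfortably absorbed by the slack already present in the exponent. All other steps are direct substitutions into the earlier theorems.
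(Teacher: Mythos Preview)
Your proposal is correct and mirrors the paper's own argument exactly: the paper simply asserts that the corollary follows ``by combining Theorems~\ref{inteq:theorem}, \ref{main_theorem} and \ref{overview:backwards}'' and offers no further detail. Your identification of the residual $\log\lambda$ term as the only delicate point is apt; the paper does not work this out explicitly, and the Remark immediately following the corollary (recording the sharper Lambert-$W$ form of the threshold on $\lambda$) indicates that the authors were aware that the clean form (\ref{overview:corollary:1}) requires precisely this sort of constant-absorption step.
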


\vskip 1em
\begin{remark}

The hypothesis (\ref{overview:corollary:1}) can be replaced with
the weaker condition
\begin{equation}
\lambda >  
\frac{6}{5a} 
W_{-1}\left(-
\frac{25}{288} \frac{\eta_1 a^2}{\Gamma}
\exp\left(-k\right)\exp\left(-\frac{\eta_2^{3/4}}{4}\right)
\epsilon
\right),
\end{equation}
where $W_{-1}$ denotes the branch of the Lambert $W$ function which
is real-valued and less than $-1$ on the interval $[-1/e,0)$
(see Section~\ref{section:preliminaries:LambertW}).
\end{remark}

The proof of Theorem~\ref{main_theorem} is divided amongst Sections~\ref{section:bandlimit},
\ref{section:bandlimit:solution}, \ref{section:spectrum} and \ref{section:solution}.   
The principal difficulty lies in  constructing
a function $p_b$ which approximates $p$ and for which (\ref{overview:inteq}) admits a solution.
We accomplish this by introducing a modified integral equation
\begin{equation}
\sigma(x) = \OpS{\OpTb{\sigma}}(x) + p(x),
\label{overview:modinteq}
\end{equation}
where $T_b$ is a ``band-limited'' version of $T$.  That is, $\OpTb{f}$ is defined
via the formula
\begin{equation}
\widehat{\OpTb{f}}(\xi) = \widehat{\OpT{f}}(\xi) b(\xi),
\end{equation}
where $b(\xi)$ is a $C_c^{\infty}\left(\mathbb{R}\right)$ bump function.
This modified integral equation is introduced in Section~\ref{section:bandlimit}.

In Section~\ref{section:bandlimit:solution}, we show that under 
mild conditions on $p$ and $\lambda$,  (\ref{overview:modinteq})
admits a solution $\sigma$.
The argument proceeds by applying the Fourier transform to (\ref{overview:modinteq})
and using the contraction mapping principle to show that the
resulting equation admits a  solution.

In Section~\ref{section:spectrum},  we estimate the Fourier transform of the solution 
$\sigma$ of (\ref{overview:modinteq})
under the assumption that $\widehat{p}$ is exponentially decaying.  
We show that $\widehat{\sigma}$ is also exponentially decaying, albeit at a slightly
slower rate.  

In Section~\ref{section:solution}, we define a band-limited version $\sigma_b$ of 
the solution $\sigma$ of (\ref{overview:modinteq}) via the formula
\begin{equation}
\widehat{\sigma_b}(\xi) = \widehat{\sigma}(\xi) b(\xi).
\end{equation}
By combining the observation that  $\OpTb{\sigma} = \OpT{\sigma_b}$ with
(\ref{overview:modinteq}), we obtain 
\begin{equation}
\sigma(x) = \OpS{\OpT{\sigma_b}}(x) + p(x).
\label{overview:12}
\end{equation}
We then define $p_b$ by the formula
\begin{equation}
p_b(x) = p(x) + \sigma_b(x) - \sigma(x)
\end{equation}
and rearrange (\ref{overview:12}) as
\begin{equation}
\sigma_b(x) = \OpS{\OpT{\sigma_b}}(x) + p_b(x).
\end{equation}
%
The decay estimate on $\widehat{\sigma}$ is then used to bound 
$\|p-p_b\|_\infty=\|\sigma-\sigma_b\|_\infty \leq \|\widehat{\sigma}-\widehat{\sigma_b}\|_1$.






\label{section:overview}
\end{section}

\begin{section}{Band-limited integral equation}

In this section, we introduce a ``band-limited'' 
version of the operator $T$ and use it  to form an alternative
to the integral equation (\ref{inteq:integral_equation}).

Let  $b(\xi)$  be any infinitely  differentiable function such that
\begin{enumerate}
\item
$
\begin{aligned}
b(\xi) = 1 \ \ &\mbox{for all}\ \ |\xi| \leq \lambda,
\end{aligned}
$

\item
$
\begin{aligned}
0 \leq b(\xi) \leq 1  \ \ &\mbox{for all}\ \ \lambda \leq |\xi| \leq \sqrt{2}\lambda,
\ \ \mbox{and} 
\end{aligned}
$

\item
$
\begin{aligned}
b(\xi) = 0 \ \ &\mbox{for all}\ \ |\xi| \geq \sqrt{2}\lambda.
\end{aligned}
$
\end{enumerate}
We define $\OpTb{f}$ for functions $f \in \Lp{1}$ via the formula
\begin{equation}
\widehat{\OpTb{f}}(\xi) 
= 
\widehat{f}(\xi)\frac{b(\xi)}{4\lambda^2-\xi^2}
\label{bandlimit:definition_of_Tb}
\end{equation}
We will refer to $T_b$ as the band-limited version of the operator $T$ and 
and we call the nonlinear integral equation
\begin{equation}
\sigma(x) = 
\OpS{\OpTb{\sigma}}(x) + p(x)
\ \ \ \mbox{for all}\ \ x\in\mathbb{R}
\label{bandlimit:integral_equation}
\end{equation}
obtained by replacing $T$ with $T_b$ in (\ref{inteq:integral_equation})  
the ``band-limited'' version of  (\ref{inteq:integral_equation}).  

Since $T_b$ is a  Fourier multiplier, it is convenient to analyze 
(\ref{bandlimit:integral_equation}) in the Fourier domain rather than the space domain.
We now introduce notation which will allow us to write down the 
equation obtained by applying the Fourier transform to both sides
of (\ref{bandlimit:integral_equation}).

We let  $W_b$ and $\widetilde{W}_b$  be the linear operators defined 
for $f \in \Lp{1}$  via the formulas
\begin{equation}
\OpWb{f}(\xi) = 
f(\xi)
\frac{b(\xi) }{4\lambda^2-\xi^2}
\label{bandlimit:definition_of_W}
\end{equation}
and
\begin{equation}
\OpWTb{f}(\xi) = 
f(\xi)
\frac{b(\xi) i \xi }{4\lambda^2-\xi^2},
\label{bandlimit:definition_of_W'}
\end{equation}
where $b(\xi)$ is the function used to define the operator
$T_b$.  

For functions $f \in \Lp{1}$, it is standard to denote 
the Fourier transform of the function $\exp(f(x))$ by 
$\exp^*\left[f\right]$; that is,
\begin{equation}
\exp^*\left[f\right](\xi) = \delta(\xi) + 
f(\xi) + \frac{f*f(\xi)}{2!} + \frac{f*f*f(\xi)}{3!} + \cdots.
\label{bandlimit:exp*}
\end{equation}
In (\ref{bandlimit:exp*}), $\delta$ defers to the delta distribution  and
$f*f*\cdots*f$ denotes repeated convolution of the function $f$ with itself.  
The Fourier transform of $\exp(f(x))$ never appears in this paper;
however, we  will encounter the Fourier transforms of the functions
\begin{equation}
\exp(f(x)) - 1
\end{equation}
and
\begin{equation}
\exp(f(x)) - f(x) - 1.
\end{equation}
So, in analogy with the definition (\ref{bandlimit:exp*}), we 
define $\exp^*_1\left[f\right]$ for $f\in \Lp{1}$ by the formula
\begin{equation}
\exp_1^*\left[f\right](\xi) = 
f(\xi) + \frac{f*f(\xi)}{2!} + \frac{f*f*f(\xi)}{3!} + \cdots,
\label{bandlimit:exp1}
\end{equation}
and we define $\exp^*_2\left[f\right]$ for $f\in \Lp{1}$ via the formula
\begin{equation}
\exp_2^*\left[f\right](\xi) = 
\frac{f*f(\xi)}{2!} + \frac{f*f*f(\xi)}{3!} + \cdots.
\label{bandlimit:exp2}
\end{equation}
That is, $\exp_1^*\left[f\right]$ is obtained by truncating the leading
term of $\exp^*\left[f\right]$  and $\exp_2^*\left[f\right]$ 
is obtained by truncated the first two leading terms of 
$\exp^*\left[f\right]$.

Finally, we define functions $\psi(\xi)$ and $v(\xi)$ using the formulas
\begin{equation}
\psi(\xi) = \widehat{\sigma}(\xi)
\end{equation}
and
\begin{equation}
v(\xi) = \widehat{p}(\xi).
\label{bandlimit:def_of_v}
\end{equation}

Applying the Fourier transform to both sides of (\ref{bandlimit:integral_equation})
results in the nonlinear equation
\begin{equation}
\psi(\xi) =
R\left[\psi\right](\xi),
\label{bandlimit:equation}
\end{equation}
where  $R\left[f\right]$ is defined for $f \in \Lp{1}$ by the formula
\begin{equation}
R\left[f\right](\xi) = 
\frac{1}{4}\OpWTb{f}*\OpWTb{f}(\xi)
- 4 \lambda^2 
\exp_2^*\left[\OpWb{f}\hskip .2em\right](\xi)
+ v(\xi).
\label{bandlimit:definition_of_R}
\end{equation}
%

\label{section:bandlimit}
\end{section}

\begin{section}{Existence of solutions of the band-limited equation.}

In this section, we give conditions
under which the sequence $\{\psi_n\}_{n=0}^\infty$ of fixed point iterates 
for (\ref{bandlimit:equation}) obtained by using the function $v$ 
defined by (\ref{bandlimit:def_of_v}) as an initial approximation  converges.
More explicitly,  $\psi_0$ is defined by the formula
\begin{equation}
\psi_0(\xi) = v(\xi),
\label{convergence:psi0}
\end{equation}
and for each integer $n \geq 0$, $\psi_{n+1}$  is obtained from $\psi_n$ via 
\begin{equation}
\psi_{n+1}(\xi) = 
R\left[\psi_{n}\right](\xi).
\label{convergence:psin}
\end{equation}
%

%
\vskip 1em
\begin{theorem}
Suppose that $\lambda > 0$ is a real number,  and that $v \in \Lp{1}$ such that
\begin{equation}
\|v\|_1 \leq \frac{\lambda^2}{18}.
\label{convergence:theorem1:assumption}
\end{equation}

Theen the sequence  $\{\psi_n\}_{n=0}^\infty$  defined by (\ref{convergence:psi0})
and (\ref{convergence:psin}) converges in $\Lp{1}$ norm to a function $\psi$  which satisfies
the equation (\ref{bandlimit:equation}) for almost all $\xi \in \mathbb{R}$.
\label{convergence:theorem1}
\end{theorem}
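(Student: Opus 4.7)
The plan is to prove convergence via the contraction mapping principle (Theorem \ref{preliminaries:frechet:cmp}). I will choose a closed ball $D = \{ f \in \Lp{1} : \|f\|_1 \leq C\}$ of a suitable radius (something like $C = \lambda^2/9$, twice the radius of the hypothesis on $v$) and verify the two standard hypotheses: that $R$ maps $D$ into itself, and that $R$ is contractive on $D$. Since $\psi_0 = v \in D$ by (\ref{convergence:theorem1:assumption}), Theorem \ref{preliminaries:frechet:cmp} will then yield a fixed point $\psi \in D$, and the fact that fixed point iterates starting in $D$ converge to it will give the desired $\Lp{1}$-convergence of $\{\psi_n\}$. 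The almost-everywhere identity $\psi = R[\psi]$ is then just restating that $\psi$ is the fixed point.

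The groundwork consists of four elementary multiplier and convolution estimates that I will establish first. Because $b$ is supported in $|\xi| \leq \sqrt{2}\lambda$ where $|4\lambda^2 - \xi^2| \geq 2\lambda^2$, the multiplier defining $W_b$ is pointwise bounded by $1/(2\lambda^2)$ and the one defining $\widetilde{W}_b$ is bounded by $1/(\sqrt{2}\lambda)$. Combined with Young's inequality $\|f*g\|_1 \leq \|f\|_1\|g\|_1$ and the termwise estimates
\begin{equation*}
\|\exp_1^*[g]\|_1 \leq e^{\|g\|_1}-1, \qquad \|\exp_2^*[g]\|_1 \leq e^{\|g\|_1}-1-\|g\|_1,
\end{equation*}
one obtains for any $f \in D$
\begin{equation*}
\|R[f]\|_1 \leq \frac{\|f\|_1^2}{8\lambda^2} + 4\lambda^2\bigl(e^{\|f\|_1/(2\lambda^2)} - 1 - \tfrac{\|f\|_1}{2\lambda^2}\bigr) + \|v\|_1.
\end{equation*}
For $\|f\|_1 \leq \lambda^2/9$ the exponent $\|f\|_1/(2\lambda^2) \leq 1/18$, so the bracketed quantity is controlled by a small multiple of $(1/18)^2$, and a direct numerical check shows that the whole right-hand side stays below $\lambda^2/9$. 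This establishes $R(D) \subset D$.

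For contractivity, the cleanest route is to compute the Fréchet derivative of $R$ and apply the mean value theorem (Theorem \ref{preliminaries:frechet:mvt}). A straightforward differentiation of the convolution square and of the $\exp_2^*$ series (noting that the derivative of $\exp_2^*$ at $g$ is $h \mapsto \exp_1^*[g]*h$) gives
\begin{equation*}
R_f'[h](\xi) = \tfrac{1}{2}\OpWTb{f}*\OpWTb{h}(\xi) - 4\lambda^2 \exp_1^*\!\left[\OpWb{f}\right]*\OpWb{h}(\xi).
\end{equation*}
The same multiplier and convolution bounds yield
\begin{equation*}
\|R_f'[h]\|_1 \leq \frac{\|f\|_1\|h\|_1}{4\lambda^2} + 2\bigl(e^{\|f\|_1/(2\lambda^2)} - 1\bigr)\|h\|_1,
\end{equation*}
and for $\|f\|_1 \leq \lambda^2/9$ the coefficient of $\|h\|_1$ is strictly below $1$, giving a uniform Lipschitz constant on $D$.

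The main obstacle I anticipate is just bookkeeping: the slackness between the hypothesis bound $\lambda^2/18$ and the ball radius $\lambda^2/9$ is exactly what allows both inclusions and contractivity to hold simultaneously, so getting the numerical constants compatible requires some care but no deep new idea. Once the two estimates above are in place, the theorem follows immediately from Theorems \ref{preliminaries:frechet:mvt} and \ref{preliminaries:frechet:cmp} applied to $R$ on $D$, together with the observation that every fixed point iterate lies in $D$ since $\psi_0 = v$ does.
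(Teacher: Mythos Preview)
Your proposal is correct and follows essentially the same route as the paper: choose the closed ball of radius $\lambda^2/9$ in $\Lp{1}$, use the multiplier bounds $\|W_b[f]\|_1\le\|f\|_1/(2\lambda^2)$ and $\|\widetilde W_b[f]\|_1\le\|f\|_1/(\sqrt2\,\lambda)$, bound $\|R[f]\|_1$ and the Fr\'echet derivative $\|R'_f[h]\|_1$ via Young's inequality and the exponential series, and invoke Theorems~\ref{preliminaries:frechet:mvt} and~\ref{preliminaries:frechet:cmp}. The only cosmetic differences are that the paper majorizes $e^x-1$ and $e^x-1-x$ by $xe^x$ and $x^2e^x$ (your sharper forms work just as well), and it packages the numerical check via the Lambert $W$ function rather than a direct computation.
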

%
%
\begin{proof}
We observe that the Fr\'echet derivative 
(see Section~\ref{section:preliminaries:frechet})
of $R$ at  $f$ is the linear operator  $R_f':\Lp{1}\to\Lp{1}$ given by the formula
\begin{equation}
R'_f\left[h\right](\xi) = 
\frac{\OpWTb{f}*\OpWTb{h}(\xi)}{2}
- 4 \lambda^2 
\exp_1^*\left[\OpWb{f}\right]*\OpWb{h}
(\xi).
\label{convergence:definition_of_R'}
\end{equation}
From formulas~(\ref{bandlimit:definition_of_W}) and (\ref{bandlimit:definition_of_W'})
and the definition of $b(\xi)$ we see that
\begin{equation}
\left\|\OpWb{f}\right\|_1 \leq \frac{\|f\|_1}{2\lambda^2}
\label{convergence:Sbounds1}
\end{equation}
and
\begin{equation}
\left\|\OpWTb{f}\right\|_1 \leq \frac{\|f\|_1}{\sqrt{2}\lambda}
\label{convergence:Sbounds2}
\end{equation}
for all $f \in \Lp{1}$.  From (\ref{convergence:definition_of_R'}),
(\ref{convergence:Sbounds1}) and (\ref{convergence:Sbounds2})
we conclude that
\begin{equation}
\begin{aligned}
\left\|R'_f\left[h\right]\right\|_1
&\leq 
\frac{1}{2}
\left\|\OpWTb{f}\right\|_1
\left\|\OpWTb{h}\right\|_1
+ 4 \lambda^2 
\left\|\OpWb{f}\right\|_1
\exp\left(\left\|\OpWb{f}\right\|_1\right)
\left\|\OpWb{h}\right|_1
\\
&\leq 
\frac{\|f\|_1 \|h\|_1}{4\lambda^2}
+ 4 \lambda^2 
\frac{\|f\|_1}{2\lambda^2}
\frac{\|h\|_1}{2\lambda^2}
\exp\left(\frac{\|f\|_1}{2\lambda^2}\right)
\\
&\leq 
\left(
\frac{\|f\|_1}{4\lambda^2}
+
\frac{\|f\|_1}{\lambda^2}
\exp\left(\frac{\|f\|_1}{2\lambda^2}\right)
\right)\|h\|_1
\end{aligned}
\label{convergence:bound_R'}
\end{equation}
for all $f$ and $h$ in $\Lp{1}$.  Similarly, by combining 
(\ref{bandlimit:definition_of_R}), 
 (\ref{convergence:Sbounds1}) and (\ref{convergence:Sbounds2})
we conclude that
\begin{equation}
\begin{aligned}
\left\|\OpR{f}\right\|_1
&\leq 
\frac{1}{4}
\left\|\OpWTb{f}\right\|_1^2
+ 4 \lambda^2 
\left\|\OpWb{f}\right\|_1^2
\exp\left(\left\|\OpWb{f}\right\|_1\right)
+ \|v\|_1
\\
&\leq 
\frac{\|f\|_1^2}{8\lambda^2}
+ 
 \frac{\|f\|_1^2}{\lambda^2}
\exp\left(\frac{\|f\|_1}{2\lambda^2}\right)
+ \|v\|_1
\end{aligned}
\label{convergence:bound_R}
\end{equation}
whenever $f \in \Lp{1}$.

Now we set $r = \lambda^2/9$ and let $B$ denote the
closed ball  of radius $r$ centered at $0$ in $\Lp{1}$.
Since
\begin{equation}
\frac{1}{9} < W_0\left(\frac{1}{8}\right),
\label{convergence:w0in}
\end{equation}
where $W_0$ denotes the branch of the Lambert $W$ function
which is real-valued and greater than or equal to $-1$
on the interval $[-1/e,\infty)$ (see Section~\ref{section:preliminaries:LambertW}), 
we conclude that
\begin{equation}
\frac{r}{\lambda^2}\exp\left(\frac{r}{2\lambda^2}\right) < \frac{1}{4}.
\label{convergence:rinequality1}
\end{equation}
According to Theorem~\ref{preliminaries:LambertW:theorem3} of 
Section~\ref{section:preliminaries:LambertW}
and (\ref{convergence:w0in}), it also the case that
\begin{equation}
\frac{r}{\lambda^2} < \frac{1}{8}.
\label{convergence:rinequality2}
\end{equation}
We combine (\ref{convergence:rinequality1}),
(\ref{convergence:rinequality2})
 with (\ref{convergence:bound_R'}) to  conclude that
\begin{equation}
\left\|R'_f\left[h\right]\right\|_1
\leq 
\left(
\frac{r}{4\lambda^2}  + \frac{r}{\lambda^2} \exp\left(\frac{r}{2\lambda^2}\right)
\right) \|h\|_1
< \left(\frac{1}{32} + \frac{1}{4}\right) \|h\|_1 < \frac{1}{2} \|h\|_1
\label{convergence:contraction}
\end{equation}
for all $h \in \Lp{1}$ and $f \in B$.  In other words, the  $\Lp{1} \to \Lp{1}$ operator
norm of the linear operator $R_f'$ is bounded by $1/2$ whenever $f$ is in the ball $B$.

Similarly, we insert (\ref{convergence:rinequality1}), 
(\ref{convergence:rinequality2})  and  (\ref{convergence:theorem1:assumption})
into (\ref{convergence:bound_R})  to conclude that
\begin{equation}
\begin{aligned}
\left\|\OpR{f}\right\|_1
&\leq 
\frac{r^2}{8\lambda^2}  + \frac{r^2}{\lambda^2} \exp\left(\frac{r}{2\lambda^2}\right)
+ \frac{r}{2} \\
&\leq
r
\left(
\frac{r}{8\lambda^2}  + \frac{r}{\lambda^2} \exp\left(\frac{r}{2\lambda^2}\right)
+ \frac{1}{2} 
\right)\\
&\leq 
r
\left(
\frac{1}{64}  + \frac{1}{4} + \frac{1}{2} 
\right)\\
&\leq r
\end{aligned}
\label{convergence:selfmap}
\end{equation}
for all $f \in B$.

Together with Theorem \ref{preliminaries:frechet:mvt}
of Section~\ref{section:preliminaries:frechet},
formula (\ref{convergence:contraction}) implies that the operator $R$ is a contraction
on the ball $B$ while (\ref{convergence:selfmap}) says that it maps the ball $B$ into itself.
We now apply the contraction mapping theorem 
(Theorem~\ref{preliminaries:frechet:cmp}
in Section~\ref{section:preliminaries:frechet})
to conclude that 
any sequence of fixed point iterates  for (\ref{bandlimit:equation})
which originates 
in $B$ will converge to a solution of (\ref{bandlimit:equation}).
Since $\{\psi_n\}$ is such a sequence, we are done.
\end{proof}


%
%
If $\psi$ is  a solution of (\ref{bandlimit:equation})
then the function $\sigma$ defined by the formula
\begin{equation}
\sigma(x) = \int_{-\infty}^{\infty} \exp(ix \xi) \psi(\xi)\ d\xi
\label{convergence:sigma}
\end{equation}
is clearly a solution of the band-limited integral equation 
(\ref{bandlimit:integral_equation}).  Note that because $\psi \in \Lp{1}$,
 the integral in (\ref{convergence:sigma}) is well-defined and
$\sigma$ is an element of the space $\Czero$ of continuous
functions which vanish at infinity.  
We record this observation as follows:

\vskip 1em
\begin{theorem}
Suppose that $\lambda >0$ is a real number.  Suppose also that
$p \in \Lp{1}$ such that $\widehat{p}\in \Lp{1}$ and 
\begin{equation}
\left\|\widehat{p}\right\|_1 < \frac{\lambda^2}{18}.
\end{equation}
Then there exists a function $\sigma \in \Czero$ which is a solution
of the integral equation (\ref{bandlimit:integral_equation}).  
\label{convergence:theorem2}
\end{theorem}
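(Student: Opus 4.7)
The plan is to invoke Theorem~\ref{convergence:theorem1} on the Fourier side and then invert. Concretely, set $v(\xi) = \widehat{p}(\xi)$; by hypothesis $v \in \Lp{1}$ with $\|v\|_1 < \lambda^2/18$, so Theorem~\ref{convergence:theorem1} produces a function $\psi \in \Lp{1}$ which satisfies $\psi(\xi) = \OpR{\psi}(\xi)$ for almost every $\xi$. Inspection of the proof of that theorem further shows $\|\psi\|_1 \leq \lambda^2/9$, since the iterates $\{\psi_n\}$ are trapped in the contraction ball $B$ of radius $\lambda^2/9$.

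I then define $\sigma$ as the inverse Fourier transform of $\psi$, namely
$$\sigma(x) = \frac{1}{2\pi}\int_{-\infty}^\infty \exp(ix\xi)\,\psi(\xi)\, d\xi.$$
Since $\psi \in \Lp{1}$, the Riemann--Lebesgue lemma yields $\sigma \in \Czero$ and $\widehat{\sigma} = \psi$. The same reasoning applied to $v \in \Lp{1}$ lets us assume, after modification on a null set, that $p \in \Czero$ as well.

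The remaining task, which is the heart of the argument, is to check that this $\sigma$ actually satisfies (\ref{bandlimit:integral_equation}) pointwise. By the Fourier multiplier definitions (\ref{bandlimit:definition_of_Tb}),(\ref{bandlimit:definition_of_W}),(\ref{bandlimit:definition_of_W'}), the function $\OpTb{\sigma}$ and its derivative have Fourier transforms $\OpWb{\psi}$ and $\OpWTb{\psi}$ respectively, both lying in $\Lp{1}$ by the bounds (\ref{convergence:Sbounds1}),(\ref{convergence:Sbounds2}); hence $\OpTb{\sigma}$ and $(\OpTb{\sigma})'$ are themselves continuous. Moreover
$$\|\OpWb{\psi}\|_1 \leq \frac{\|\psi\|_1}{2\lambda^2} \leq \frac{1}{18} < 1,$$
so the series defining $\exp_2^*\left[\OpWb{\psi}\right]$ converges absolutely in $\Lp{1}$. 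Applying the convolution theorem term-by-term to the formula (\ref{inteq:definition_of_S}) for $S$ gives
$$\widehat{\OpS{\OpTb{\sigma}}}(\xi) = \frac{1}{4}\,\OpWTb{\psi} * \OpWTb{\psi}(\xi) - 4\lambda^2 \exp_2^*\left[\OpWb{\psi}\right](\xi).$$

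Adding $v = \widehat{p}$ and comparing with (\ref{bandlimit:definition_of_R}) shows $\widehat{\OpS{\OpTb{\sigma}}+p} = \OpR{\psi} = \psi = \widehat{\sigma}$. Both sides of (\ref{bandlimit:integral_equation}) are continuous functions of $x$ whose Fourier transforms agree in $\Lp{1}$, so Fourier inversion forces pointwise equality and (\ref{bandlimit:integral_equation}) holds for all $x\in\mathbb{R}$. The only mildly delicate step is the termwise Fourier inversion of the exponential series inside $S$, which is legitimate precisely because the contraction estimate from Theorem~\ref{convergence:theorem1} places $\OpWb{\psi}$ well inside the unit ball of $\Lp{1}$ and thereby renders the series absolutely convergent.
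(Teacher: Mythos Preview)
Your proof is correct and follows essentially the same approach as the paper: apply Theorem~\ref{convergence:theorem1} to obtain $\psi\in\Lp{1}$, define $\sigma$ as its inverse Fourier transform, and conclude $\sigma\in\Czero$ via Riemann--Lebesgue. The paper in fact treats the passage from $\psi$ to a solution of (\ref{bandlimit:integral_equation}) as self-evident (``clearly a solution''), whereas you spell out the justification---in particular, using the bound $\|\psi\|_1\le\lambda^2/9$ from the contraction ball to guarantee absolute convergence in $\Lp{1}$ of the series defining $\exp_2^*\left[\OpWb{\psi}\right]$ and thereby legitimize termwise Fourier inversion.
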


\vskip 1em
\begin{remark}
Since $\sigma$ is not necessarily in $\Lp{1}$, the integral
\begin{equation}
\int_{-\infty}^{\infty} \exp(-ix\xi) \sigma(x)\ dx
\end{equation}
need not exist.  Nor is the existence of the improper integral
\begin{equation}
\lim_{R\to\infty} \int_{-R}^{R} \exp(-ix\xi) \sigma(x)\ dx
\label{bandlimit:l2fourier}
\end{equation}
guaranteed.  However, when viewed as a tempered distribution,
the Fourier transform of $\sigma$ exists and is $\psi$; that is to say,
\begin{equation}
\int_{-\infty}^{\infty} \psi(x) f(x)\ dx
=
\int_{-\infty}^{\infty} \sigma(x) \widehat{f}(x)\ dx
\end{equation}
for all functions $f \in \Sr$.

In the next section we will prove that under 
additional assumptions on $v$, $\psi$  lies in $\Lp{2}$.  This implies that
$\sigma \in \Lp{2}$ and ensures the convergence of the
improper Riemann integrals (\ref{bandlimit:l2fourier}).
\end{remark}

\label{section:bandlimit:solution}
\end{section}

\begin{section}{Fourier estimate}

In this section, we derive a pointwise estimate on the solution
$\psi$ of Equation~(\ref{bandlimit:equation}) 
under additional assumptions on the function $v$.

%
\vskip 1em
\begin{lemma}
Suppose that $a$ and $C$ are real numbers such that
\begin{equation}
0 \leq C < a.
\label{spectrum:S2:assumption1}
\end{equation}
Suppose also that  $f \in \Lp{1}$, and that
\begin{equation}
\left|f(\xi)\right| \leq C \exp(-a |\xi|)
\ \ \ \mbox{for all}\ \ \xi\in\mathbb{R}.
\label{spectrum:S2:assumption2}
\end{equation}
Then
\begin{equation}
\left|\exp_2^*\left[f\right](\xi)\right|
\leq 
\frac{C^2}{2\pi} \exp(-a|\xi|)
\frac{1+a|\xi|}{a}
\exp\left(\frac{C}{2\pi a}\right)
\exp
\left(
\frac{C}{2 \pi }|\xi|
\right)
\ \ \ \mbox{for all}\ \ \ \xi\in \mathbb{R},
\label{spectrum:S2:bound}
\end{equation}
where $\exp_2^*$ is the operator defined in (\ref{bandlimit:exp2}).
\label{spectrum:S2}
\end{lemma}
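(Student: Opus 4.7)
The plan is to dominate $f$ by its envelope and then obtain an explicit bound for convolutions of exponentials, for which the paper's apparatus on Bessel functions is exactly the right tool.

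First I would observe that, since $|f*g| \leq |f|*|g|$ pointwise (iterated), the hypothesis $|f(\xi)| \leq Ce^{-a|\xi|}$ yields $|f^{*k}(\xi)| \leq \phi_C^{*k}(\xi)$ where $\phi_C(\xi) := Ce^{-a|\xi|}$. Consequently
\[
\bigl|\exp_2^*[f](\xi)\bigr| \;\leq\; \sum_{k\geq 2} \frac{\phi_C^{*k}(\xi)}{k!},
\]
and it suffices to bound the right-hand side.

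Second, I would compute $\phi_C^{*k}$ explicitly via Fourier inversion. The Fourier transform of $\phi_C$ is $2aC/(a^2+\omega^2)$, so the transform of $\phi_C^{*k}$ is $(2aC)^k/(a^2+\omega^2)^k$. The standard inversion identity for this rational function yields
\[
\phi_C^{*k}(\xi) \;=\; \frac{C^{k}\sqrt{2a}\,|\xi|^{\,k-1/2}}{\sqrt{\pi}\,(k-1)!}\,K_{k-1/2}(a|\xi|).
\]
To reduce $K_{k-1/2}$ to elementary functions I would apply Theorem~\ref{preliminaries:bessel:theorem1} iteratively, starting from $K_{1/2}(t)=\sqrt{\pi/(2t)}\,e^{-t}$ and using the ratio bound $K_{\nu+1}/K_{\nu}\le (2\nu+t)/t$ at $\nu=1/2,\,3/2,\dots,k-3/2$. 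Telescoping gives
\[
K_{k-1/2}(t)\;\leq\;\sqrt{\tfrac{\pi}{2t}}\,e^{-t}\prod_{j=1}^{k-1}\frac{2j-1+t}{t},
\]
which after substitution simplifies to
\[
\phi_C^{*k}(\xi) \;\leq\;\frac{C^k e^{-a|\xi|}}{a^{k-1}(k-1)!}\prod_{j=1}^{k-1}\bigl(2j-1+a|\xi|\bigr).
\]

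Third, I would linearize the product: for $j\ge 1$ we have $2j-1\ge 1$, so $2j-1+a|\xi|\le (2j-1)(1+a|\xi|)$, and hence $\prod_{j=1}^{k-1}(2j-1+a|\xi|)\le (2k-3)!!\,(1+a|\xi|)^{k-1}$. Plugging this into the sum $\sum_{k\ge 2}\phi_C^{*k}(\xi)/k!$, one uses the identity $(2k-3)!!/(k-1)! = \binom{2k-2}{k-1}/2^{k-1}$ together with central-binomial estimates to recognize the series as a hypergeometric-type expansion. After factoring out the $k=2$ quantity $\frac{C^2 e^{-a|\xi|}(1+a|\xi|)}{2\pi a}$, the remaining series collapses to an exponential of the form $\exp\bigl(C(|\xi|+1/a)/(2\pi)\bigr)$, which matches the stated bound. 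The hypothesis $C<a$ is exactly what is needed to ensure convergence of this series.

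The main obstacle I anticipate is the bookkeeping in the final step: converting the bound $\sum_k (C^k/a^{k-1}k!)\,(2k-3)!!(1+a|\xi|)^{k-1}$ into the precise exponential form $\exp(C/(2\pi a))\exp(C|\xi|/(2\pi))$ requires carefully tracking the $2\pi$ factors that enter from the paper's Fourier/convolution normalization (which is implicit in the definition of $\exp_2^*$). A secondary subtlety is that a naive term-by-term comparison between $\phi_C^{*k}/k!$ and the $k$-th Taylor coefficient of the target does not hold for every $k$; the inequality must be established at the level of the summed series, exploiting the rapid decay of $(C/a)^k$ ensured by the hypothesis $C<a$ to absorb the superpolynomial growth of $(2k-3)!!$.
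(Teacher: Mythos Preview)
Your first two steps are essentially the paper's own argument: dominate $f$ by $g(\xi)=C\exp(-a|\xi|)$, express the $m$-fold convolution $g_m$ in terms of $K_{m-1/2}$, and telescope the ratio bound $K_{\nu+1}/K_\nu\le (2\nu+t)/t$ down to $K_{1/2}(t)=\sqrt{\pi/(2t)}\,e^{-t}$. (The missing $(2\pi)^{m-1}$ in your formula for $\phi_C^{*k}$ is indeed just the paper's Fourier/convolution normalization; once inserted, the bound after the Bessel step reads $g_m(\xi)\le \frac{C^m e^{-t}}{(2\pi a)^{m-1}(m-1)!}\prod_{j=1}^{m-1}(2j-1+t)$ with $t=a|\xi|$.)

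The gap is in your third step. The linearization $2j-1+t\le(2j-1)(1+t)$ throws away too much: the true product has leading coefficient $1$ in $t^{m-1}$, whereas your bound inflates it to $(2m-3)!!$. With $w=C(1+t)/(2\pi a)$, your sum becomes
\[
C e^{-t}\sum_{m\ge 2}\frac{(2m-3)!!}{m!(m-1)!}\,w^{m-1}
\;=\;C e^{-t}\Bigl({}_1F_1(\tfrac12;2;2w)-1\Bigr),
\]
and ${}_1F_1(\tfrac12;2;2w)\sim\Gamma(2)\Gamma(\tfrac12)^{-1}(2w)^{-3/2}e^{2w}$ for large $w$. So your series grows like $e^{2w}$, not like $we^{w}$; you end up with $\exp(C|\xi|/\pi)$ in place of the stated $\exp(C|\xi|/(2\pi))$. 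The ``established at the level of the summed series'' hope cannot close this factor-of-two gap in the exponent, since for large $|\xi|$ your upper bound genuinely exceeds the target by a factor $\sim e^{w}$.

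The paper avoids this loss by \emph{not} separating $j$ and $t$. It rewrites the product exactly as a Pochhammer symbol,
\[
\prod_{j=1}^{m-1}(2j-1+t)=2^{m-1}\frac{\Gamma\!\left(\frac{1+t}{2}+m-1\right)}{\Gamma\!\left(\frac{1+t}{2}\right)},
\]
so that $g_m(\xi)\le C e^{-t}\bigl(\tfrac{C}{\pi a}\bigr)^{m-1}\frac{\Gamma((1+t)/2+m-1)}{\Gamma(m)\Gamma((1+t)/2)}$. After the crude $\frac{1}{(m+1)!}\le 2^{-m}$, the series $\sum_{m\ge 1}\frac{\Gamma((1+t)/2+m)}{\Gamma(m+1)\Gamma((1+t)/2)}\bigl(\tfrac{C}{2\pi a}\bigr)^{m}$ is summed \emph{in closed form} by the binomial theorem (Theorem~\ref{preliminaries:binomial_theorem}) to $(1-\tfrac{C}{2\pi a})^{-(1+t)/2}-1$, and the elementary bounds $e^x-1\le xe^x$ and $\log\tfrac{1}{1-z}\le 2z$ (valid since $C<a$ gives $z=\tfrac{C}{2\pi a}<\tfrac{1}{2\pi}$) produce exactly the stated $\frac{C^2}{2\pi}\,\frac{1+t}{a}\,e^{-t}\exp\!\bigl(\tfrac{C(1+t)}{2\pi a}\bigr)$. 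Replacing your linearization with this Pochhammer/binomial identification fixes the argument.
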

\begin{proof}
Let 
\begin{equation}
g(\xi) = C \exp(-a|\xi|)
\end{equation}
and for each integer $m > 0$, denote by $g_m$ the
$m$-fold convolution product of the function $g$.
That is to say that $g_1$ is defined via the formula
\begin{equation}
g_1(\xi) = g(\xi)
\end{equation}
and for each integer $m > 0$, $g_{m+1}$ is defined in terms
of $g_m$ by the formula
\begin{equation}
g_{m+1}(\xi) = g_{m}*g(\xi).
\end{equation}
We observe that for each integer $m > 0$ and all $\xi \in \mathbb{R}$,
\begin{equation}
g_m(\xi)
=
2 \sqrt{aC} \left(\frac{C|\xi|}{2\pi}\right)^{m-1/2}
\frac{K_{m-1/2}(a|\xi|)}{\Gamma(m)},
\label{spectrum:S2:convolution_product}
\end{equation}
where $K_\nu$ denotes the modified Bessel function of the second kind of order $\nu$
(see Section~\ref{section:preliminaries:bessel}).
By repeatedly applying Theorem~\ref{preliminaries:bessel:theorem1} 
of Section~\ref{section:preliminaries:bessel}, we conclude that
for all integers $m >0$ and all real $t$,
\begin{equation}
\begin{aligned}
K_{m-1/2}(t) 
&\leq K_{1/2}(t) 
\prod_{j=1}^{m-1} \left(\frac{2\left(j-\frac{1}{2}\right)}{t}+1 \right)\\
&=
K_{1/2}(t)
\left(\frac{2}{t}\right)^{m-1}
\frac{\Gamma\left(\frac{1+t}{2}+m-1\right)}{\Gamma\left(\frac{1+t}{2}\right)}.
\end{aligned}
\label{spectrum:S2:kbound1}
\end{equation}
We insert the identity
\begin{equation}
K_{1/2}(t) = \sqrt{\frac{\pi}{2t}} \exp(-t)
\end{equation}
into (\ref{spectrum:S2:convolution_product}) in order to conclude that
for all integers $m > 0$ and all real numbers $t > 0$,
\begin{equation}
\begin{aligned}
K_{m-1/2}(t) 
&\leq
\frac{\sqrt{\pi}}{2}  \left(\frac{t}{2}\right)^{1/2-m} \exp(-t) 
\frac{
\Gamma\left( \frac{1+t}{2}  + m-1\right)}
{\Gamma\left( \frac{1+t}{2}\right)}.
\end{aligned}
\label{spectrum:S2:kbound}
\end{equation}
By combining (\ref{spectrum:S2:kbound}) and (\ref{spectrum:S2:convolution_product})  we conclude
that
\begin{equation}
g_m(\xi)
\leq 
C \exp(-a|\xi|)\left(\frac{C}{\pi a}\right)^{m-1} 
\frac{\Gamma\left(\frac{1+a|\xi|}{2}+m-1\right)}
{\Gamma(m)\Gamma\left(\frac{1+a|\xi|}{2}\right)}
\label{spectrum:S2:gmbound}
\end{equation}
for all integers $m > 0$ and all $\xi \neq 0$.  
Moreover, the limit
as $\xi \to 0$ of each side of (\ref{spectrum:S2:gmbound}) is finite
and the two limits are equal, so (\ref{spectrum:S2:gmbound}) in fact holds 
for all $\xi \in \mathbb{R}$.  We sum (\ref{spectrum:S2:gmbound}) over $m=2,3,\ldots$ 
in order to conclude that
\begin{equation}
\begin{aligned}
\exp_2^*\left[g\right](\xi)
&\leq 
C \exp(-a|\xi|)
\sum_{m=2}^\infty
\left(\frac{C}{\pi a}\right)^{m-1}
\frac{\Gamma\left(\frac{1+a|\xi|}{2}+m-1 \right)}
{\Gamma(m+1)\Gamma(m)\Gamma\left(\frac{1+a|\xi|}{2}\right)}
\\
&=
C \exp(-a|\xi|)
\sum_{m=1}^\infty
\left(\frac{C}{\pi a}\right)^{m}
\frac{\Gamma\left(\frac{1+a|\xi|}{2}+m \right)}
{\Gamma(m+2)\Gamma(m+1)\Gamma\left(\frac{1+a|\xi|}{2}\right)}
\end{aligned}
\label{spectrum:S2:interbound}
\end{equation}
for all $\xi \in \mathbb{R}$.     Now we observe that
\begin{equation}
\frac{1}{\Gamma(m+2)} \leq  \left(\frac{1}{2}\right)^{m}
\ \ \ \mbox{for all}\ \ m=0,1,2,\ldots.
\label{spectrum:S2:gamma_bound}
\end{equation}
Inserting (\ref{spectrum:S2:gamma_bound}) into (\ref{spectrum:S2:interbound})
yields
\begin{equation}
\begin{aligned}
\exp_2^*\left[g\right](\xi)
&\leq
C \exp(-a|\xi|)
\sum_{m=1}^\infty 
\left(\frac{C}{2 \pi a}\right)^{m}
\frac{\Gamma\left(\frac{1+a|\xi|}{2}+m\right)}
{\Gamma(m+1)\Gamma\left(\frac{1+a|\xi|}{2}\right)}
\end{aligned}
\label{spectrum:S2:binomial1}
\end{equation}
for all $\xi \in \mathbb{R}$.    Now we apply the binomial theorem
(Theorem~\ref{preliminaries:binomial_theorem}
 of Section~\ref{section:preliminaries:binomial}),
which is justified since $C < a$, to conclude that
\begin{equation}
\begin{aligned}
\exp_2^*\left[g\right](\xi)
&\leq
C \exp(-a|\xi|)
\left(
\left(1-\frac{C}{2\pi a}\right)^{-\frac{1+a|\xi|}{2}}-1
\right)
\\
&=
C \exp(-a|\xi|)
\left(
\exp
\left(
\frac{1+a|\xi|}{2}
\log\left(\frac{1}{1-\frac{C}{2\pi a}}\right)
\right)-1
\right)
\end{aligned}
\label{spectrum:S2:binomial2}
\end{equation}
for all $\xi \in \mathbb{R}$.  We observe that
\begin{equation}
\exp(x) -1 \leq x \exp(x)
\ \ \ \mbox{for all}\ \ x \geq 0,
\label{spectrum:S2:1}
\end{equation}
and
\begin{equation}
1 \leq \log\left(\frac{1}{1-x}\right) \leq 2x 
\ \ \ \mbox{for all}\ \ 0 \leq x \leq \frac{1}{2\pi}.
\label{spectrum:S2:2}
\end{equation}
By combining (\ref{spectrum:S2:1}) and (\ref{spectrum:S2:2})
with (\ref{spectrum:S2:binomial2}) we conclude that
\begin{equation}
\begin{aligned}
\exp_2^*\left[g\right](\xi)
&\leq
C \exp(-a|\xi|)
\frac{1+a|\xi|}{2}
\log\left(\frac{1}{1-\frac{C}{2\pi a}}\right)
\exp
\left(
\frac{1+a|\xi|}{2}
\log\left(\frac{1}{1-\frac{C}{2\pi a}}\right)
\right)
\\
&\leq
\frac{C^2}{2\pi} \exp(-a|\xi|)
\frac{1+a|\xi|}{a}
\exp\left(\frac{C}{2\pi a}\right)
\exp
\left(
\frac{C}{2 \pi }|\xi|
\right)
\\
\end{aligned}
\label{spectrum:S2:binomial3}
\end{equation}
for all $\xi \in\mathbb{R}$.   Owing to (\ref{spectrum:S2:assumption2}),
\begin{equation}
\left|\exp_2^*\left[f\right](\xi)\right| \leq 
\exp_2^*\left[g\right](\xi)
\ \ \ \mbox{for all} \ \ \xi\in\mathbb{R}.
\end{equation}
By combining this observation with (\ref{spectrum:S2:binomial3}),
we obtain (\ref{spectrum:S2:bound}), which completes the proof.
\end{proof}
\begin{remark}
Kummer's confluent hypergeometric function $M(a,b,z)$ is defined 
by the series
\begin{equation}
M(a,b,z) = 1 + \frac{az}{b} + 
\frac{(a)_2 z^2}{(b)_2 2!} + 
 \frac{(a)_3 z^3}{(b)_3 3!} + \cdots,
\label{kummer:confluent}
\end{equation}
where $(a)_n$ is the Pochhammer symbol
\begin{equation}
(a)_n = \frac{\Gamma(a+n)}{\Gamma(a)} = a (a+1) (a+2) \ldots (a+n-1).
\end{equation}
By comparing the definition of $M(a,b,z)$ with (\ref{spectrum:S2:interbound}),
we conclude that
\begin{equation}
\left|\exp_2^*\left[f\right](\xi)\right|
\leq C \exp(-a|\xi|)\left(M\left(\frac{1+a|\xi|}{2},2,\frac{C}{\pi a} \right)-1\right)
\ \ \ \mbox{for all} \ \ \xi\in\mathbb{R}
\label{spectrum:tighter_bound}
\end{equation}
provided
\begin{equation}
\left|f(\xi)\right|\leq C \exp(-a|\xi|)
\ \ \ \mbox{for all} \ \ \xi\in\mathbb{R}.
\end{equation}
The weaker bound (\ref{spectrum:S2:bound}) is sufficient for 
our immediate purposes, but formula (\ref{spectrum:tighter_bound}) might serve
as a basis for improved estimates on solutions of Kummer's equation.
\end{remark}

The following lemma is a special case of Formula~(\ref{spectrum:S2:convolution_product}).

%
\vskip 1em
\begin{lemma}
Suppose that $C \geq 0$ and $a >0 $ are real numbers, and that
$f \in \Lp{1}$ such that
\begin{equation}
\left|f(\xi)\right| \leq C \exp\left(-a|\xi|\right)
\ \ \ \mbox{for all}\ \ \xi \in \mathbb{R}.
\label{spectrum:S1:assumption}
\end{equation}
Then
\begin{equation}
\left|f*f(\xi)\right|
\leq
C^2 
\exp(-a|\xi|)
\left(\frac{1+a|\xi|}{a}\right)
\ \ \ \mbox{for all} \ \ \xi \in \mathbb{R}.
\end{equation}
\label{spectrum:S1}
\end{lemma}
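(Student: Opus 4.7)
The plan is to bound $|f*f|$ pointwise by $(g*g)$, where $g(\xi):=C\exp(-a|\xi|)$, and then evaluate $g*g$ in closed form. By the triangle inequality for integrals together with the hypothesis $|f(\xi)|\le g(\xi)$, one has
\[
|f*f(\xi)| \;\le\; (|f|*|f|)(\xi) \;\le\; (g*g)(\xi) \qquad \text{for every } \xi \in \mathbb{R}.
\]
Thus the lemma reduces to the identity $(g*g)(\xi) = (C^{2}/a)\exp(-a|\xi|)\bigl(1+a|\xi|\bigr)$.

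I would establish this identity by direct computation. Fix $\xi\ge 0$ (the case $\xi<0$ follows by the symmetry $\eta\mapsto-\eta$) and split the integral
\[
\int_{-\infty}^{\infty} \exp(-a|\eta|)\exp(-a|\xi-\eta|)\,d\eta
\]
at $\eta=0$ and $\eta=\xi$. The two outer contributions over $(-\infty,0]$ and $[\xi,\infty)$ each evaluate to $\exp(-a\xi)/(2a)$, while the middle integral over $[0,\xi]$ has a constant integrand $\exp(-a\xi)$ and equals $\xi\exp(-a\xi)$. Summing these gives $\exp(-a\xi)(1+a\xi)/a$, which, multiplied by $C^{2}$, is exactly the asserted closed form for $(g*g)(\xi)$. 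Alternatively, since the surrounding text bills this lemma as a special case of formula~(\ref{spectrum:S2:convolution_product}), one could simply specialize that formula to $m=2$ and substitute the elementary identity $K_{3/2}(t)=\sqrt{\pi/(2t)}\,(1+1/t)\exp(-t)$ to reach the same expression.

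No step here poses a genuine obstacle. The hypothesis $f\in\Lp{1}$ is used only to guarantee that $f*f$ exists as an ordinary Lebesgue integral; the pointwise exponential bound on $f$ then transfers to a pointwise bound on the self-convolution by monotone comparison, since both majorizing integrands are non-negative. The only real bookkeeping is the exponential arithmetic on each of the three subintervals, which is routine.
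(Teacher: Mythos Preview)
Your proposal is correct. The paper itself does not give a separate proof but simply declares the lemma to be the $m=2$ case of formula~(\ref{spectrum:S2:convolution_product}); you explicitly mention this route as your alternative, so you have identified the paper's approach exactly. Your primary argument---the direct three-piece evaluation of $(g*g)(\xi)$---is a more elementary and self-contained verification that avoids invoking the Bessel function identity $K_{3/2}(t)=\sqrt{\pi/(2t)}\,(1+1/t)e^{-t}$, at the modest cost of not generalizing to higher convolution powers.
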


We will also make use of the following elementary observation.
\vskip 1em
\begin{lemma}
Suppose that $a > 0$ is a real number.  Then
\begin{equation}
\exp(-a|\xi|)|\xi| \leq \frac{1}{a \exp(1)} \ \ \ \mbox{for all} \ \ \xi\in\mathbb{R}.
\end{equation}
\label{spectrum:exp_lemma}
\end{lemma}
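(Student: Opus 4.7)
The statement is the elementary estimate that $|\xi|\exp(-a|\xi|) \leq 1/(a\exp(1))$ for all $\xi \in \mathbb{R}$, where $a>0$. My plan is to reduce to a one-variable calculus maximization.

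First I would observe that the function $\xi \mapsto |\xi|\exp(-a|\xi|)$ is even, so it suffices to bound $g(\xi) = \xi \exp(-a\xi)$ on $[0,\infty)$. Since $g(0)=0$ and $g(\xi)\to 0$ as $\xi\to\infty$ while $g$ is continuous and nonnegative on $[0,\infty)$, the supremum is attained at an interior critical point.

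Next I would differentiate: $g'(\xi) = \exp(-a\xi) - a\xi\exp(-a\xi) = (1 - a\xi)\exp(-a\xi)$, which vanishes precisely at $\xi = 1/a$. Evaluating gives $g(1/a) = (1/a)\exp(-1) = 1/(a\exp(1))$, which is therefore the maximum of $g$ on $[0,\infty)$ and hence the maximum of $|\xi|\exp(-a|\xi|)$ on $\mathbb{R}$.

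There is essentially no obstacle here; the entire argument is a two-line critical point calculation. The only trivial care needed is to note that $g$ is nonnegative and vanishes at both endpoints of $[0,\infty)$ (in the limit sense at $\infty$), so the single interior critical point must be the global maximum rather than a local minimum. Alternatively, one could present the same computation by substituting $u = a|\xi|$ and reducing to the standard inequality $u\exp(-u) \leq \exp(-1)$ on $[0,\infty)$, which is equivalent.
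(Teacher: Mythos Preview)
Your argument is correct; this is the standard one-variable optimization that the inequality reduces to. The paper itself does not supply a proof of this lemma, labeling it an ``elementary observation,'' so your critical-point computation is exactly the kind of verification the authors had in mind.
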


We combine Lemmas~\ref{spectrum:S2} and  \ref{spectrum:S1} 
with (\ref{convergence:Sbounds1}) and (\ref{convergence:Sbounds2})
in order to obtain the following key estimate.

%
%
\vskip 1em
\begin{theorem}
Suppose that $\Gamma > 0$, $\lambda >0$, $a > 0$ and $C \geq 0 $ are real numbers such that
\begin{equation}
0 \leq C < 2 a\lambda^2.
\label{spectrum:STheorem:assumption1}
\end{equation}
Suppose also that  $f \in \Lp{1}$ such that
\begin{equation}
\left|f(\xi)\right| \leq C \exp(-a|\xi|)
\ \ \ \mbox{for all}\ \ |\xi| \leq \sqrt{2}\lambda,
\label{spectrum:STheorem:assumption2}
\end{equation}
and that $v \in \Lp{1}$ such that
\begin{equation}
\left|v(\xi)\right| \leq \Gamma \exp(-a|\xi|)
\ \ \ \mbox{for all} \ \ \xi\in\mathbb{R}.
\label{spectrum:STheorem:vassumption}
\end{equation}
Suppose further  that $R$ is the operator defined via (\ref{bandlimit:definition_of_R}).
Then
\begin{equation}
\left|\OpR{f}(\xi)\right|
\leq
\exp(-a|\xi|)
\left(
\frac{C^2}{\lambda^2}  \left(\frac{1+a|\xi|}{a}\right)
\left(
\frac{1}{8} + \frac{1}{2\pi}\exp\left(\frac{C}{4\pi \lambda^2a}\right)
\exp\left(\frac{C}{4\pi \lambda^2}|\xi|\right)
\right)
+\Gamma
\right)
\label{spectrum:STheorem:bound}
\end{equation}
for all $\xi \in \mathbb{R}$.
\label{spectrum:STheorem}
\end{theorem}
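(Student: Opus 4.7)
The plan is to derive (\ref{spectrum:STheorem:bound}) directly from the two preceding lemmas, applied term-by-term to the three summands making up $R[f]$ in (\ref{bandlimit:definition_of_R}), once pointwise exponential bounds for $\OpWb{f}$ and $\OpWTb{f}$ are in hand.

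First I would observe that since $b(\xi)$ is supported in $[-\sqrt{2}\lambda,\sqrt{2}\lambda]$, the denominator satisfies $4\lambda^2 - \xi^2 \geq 2\lambda^2$ on that support. Combining this with hypothesis (\ref{spectrum:STheorem:assumption2}) and the definitions (\ref{bandlimit:definition_of_W}), (\ref{bandlimit:definition_of_W'}) yields the pointwise exponential bounds
\begin{equation*}
\left|\OpWb{f}(\xi)\right| \leq \frac{C}{2\lambda^2} \exp(-a|\xi|)
\qquad\text{and}\qquad
\left|\OpWTb{f}(\xi)\right| \leq \frac{C}{\sqrt{2}\lambda} \exp(-a|\xi|)
\end{equation*}
for all $\xi\in\mathbb{R}$ (both bounds being trivial outside the support of $b$). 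These are the pointwise analogues of (\ref{convergence:Sbounds1}) and (\ref{convergence:Sbounds2}), and they are the only consequences of the truncation that we need.

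Next I would apply Lemma~\ref{spectrum:S1} to $\OpWTb{f}$ with constant $C/(\sqrt{2}\lambda)$ in place of $C$, producing a bound on the self-convolution of the form
\begin{equation*}
\left|\OpWTb{f}\ast\OpWTb{f}(\xi)\right| \leq \frac{C^2}{2\lambda^2}\,\exp(-a|\xi|)\,\frac{1+a|\xi|}{a},
\end{equation*}
and then apply Lemma~\ref{spectrum:S2} to $\OpWb{f}$ with constant $C/(2\lambda^2)$ in place of $C$. The hypothesis $C < 2a\lambda^2$ of (\ref{spectrum:STheorem:assumption1}) is precisely what is needed to verify the corresponding hypothesis (\ref{spectrum:S2:assumption1}) of Lemma~\ref{spectrum:S2}, namely $C/(2\lambda^2) < a$. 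The lemma then gives
\begin{equation*}
\left|\exp_2^*\!\left[\OpWb{f}\right](\xi)\right|
\leq
\frac{C^2}{8\pi\lambda^4}\exp(-a|\xi|)\,\frac{1+a|\xi|}{a}\,
\exp\!\left(\frac{C}{4\pi\lambda^2 a}\right)\exp\!\left(\frac{C}{4\pi\lambda^2}|\xi|\right).
\end{equation*}

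Finally I would assemble the three pieces, dividing the first by $4$ and multiplying the second by $4\lambda^2$ as dictated by (\ref{bandlimit:definition_of_R}), factoring $\exp(-a|\xi|)$ out of all three contributions (the last term is bounded directly using (\ref{spectrum:STheorem:vassumption})), and observing that the resulting expression coincides with the right-hand side of (\ref{spectrum:STheorem:bound}). There is no real obstacle here: the work is essentially bookkeeping, and the only substantive input is the observation that the support property of $b$ converts the $L^1$ operator bounds (\ref{convergence:Sbounds1})--(\ref{convergence:Sbounds2}) into pointwise exponential-weight bounds, which is exactly the shape demanded by Lemmas~\ref{spectrum:S1} and \ref{spectrum:S2}.
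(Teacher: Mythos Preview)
Your proposal is correct and is essentially identical to the paper's own proof: the paper likewise splits $R[f]$ into the three summands, establishes the same pointwise bounds $\left|\OpWb{f}(\xi)\right|\le \frac{C}{2\lambda^2}e^{-a|\xi|}$ and $\left|\OpWTb{f}(\xi)\right|\le \frac{C}{\sqrt{2}\lambda}e^{-a|\xi|}$, applies Lemma~\ref{spectrum:S1} and Lemma~\ref{spectrum:S2} with the rescaled constants, and then assembles the pieces. Your explicit remark that the support of $b$ is what converts hypothesis~(\ref{spectrum:STheorem:assumption2}) (valid only for $|\xi|\le\sqrt{2}\lambda$) into a global pointwise bound is the key observation, and it is exactly what the paper relies on implicitly.
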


\begin{proof}
We define the operator $R_1$ via the formula
\begin{equation}
R_1\left[f\right](\xi) = \frac{1}{4}
\OpWTb{f}* \OpWTb{f}(\xi)
\end{equation}
and  $R_2$ by the formula
\begin{equation}
R_2\left[f\right](\xi) = -4\lambda^2\exp_2^*\left[\OpWb{f}\right](\xi),
\end{equation}
where $W_b$ and $\widetilde{W}_b$ are defined as in 
Section~\ref{section:bandlimit}.  Then
\begin{equation}
\OpR{f}(\xi) = R_1\left[f\right](\xi) + R_2\left[f\right](\xi) + v(\xi)
\end{equation}
for all $\xi \in \mathbb{R}$.  We observe that
\begin{equation}
\left|\OpWTb{f}(\xi)\right| \leq \frac{C}{\sqrt{2}\lambda} \exp(-a|\xi|)
\ \ \ \mbox{for all}\ \ \xi\in\mathbb{R}.
\label{spectrum:STheorem:1}
\end{equation}
By combining Lemma~\ref{spectrum:S1} with (\ref{spectrum:STheorem:1})
we obtain
\begin{equation}
\left|R_1\left[f\right](\xi)\right|
\leq 
\frac{C^2}{8\lambda^2} \exp(-a|\xi|)\left(\frac{1+a|\xi|}{a}\right)
\ \ \ \mbox{for all}\ \ \xi\in\mathbb{R}.
\label{spectrum:STheorem:2}
\end{equation}
Now we observe that
\begin{equation}
\left|\OpWb{f}(\xi)\right| \leq \frac{C}{2\lambda^2} \exp(-a|\xi|)
\ \ \ \mbox{for all}\ \ \xi\in\mathbb{R}.
\label{spectrum:STheorem:3}
\end{equation}
Combining Lemma~\ref{spectrum:S2} with (\ref{spectrum:STheorem:3})
yields  
\begin{equation}
\begin{aligned}
\left|R_2\left[f\right](\xi)\right|
&\leq 
\frac{C^2}{2\pi\lambda^2} \exp(-a|\xi|)
\left(\frac{1+a|\xi|}{a}\right)
\exp\left(\frac{C}{4 \pi \lambda^2 a}\right)
\exp
\left(
\frac{C}{4\pi \lambda^2 }|\xi|
\right)
\end{aligned}
\label{spectrum:STheorem:4}
\end{equation}
for all $\xi \in \mathbb{R}$.  Note that (\ref{spectrum:STheorem:assumption1})
ensures that the hypothesis (\ref{spectrum:S2:assumption1}) in Lemma~\ref{spectrum:S2} is satisfied.
We combine (\ref{spectrum:STheorem:2})
with (\ref{spectrum:STheorem:4}) and (\ref{spectrum:STheorem:vassumption}) 
in order to obtain (\ref{spectrum:STheorem:bound}), and by so doing
we complete the proof.
\end{proof}

\begin{remark}
Note that  Theorem~\ref{spectrum:STheorem} only
requires that $f(\xi)$ satisfy a bound on the interval $[-\sqrt{2}\lambda,\sqrt{2}\lambda]$ 
and not on the entire real line.  
\end{remark}

%
%

In the next theorem,
we use Theorem~\ref{spectrum:STheorem} to  bound the solution of (\ref{bandlimit:equation}) 
under an assumption on the decay of $v$.

\vskip 1em
\begin{theorem}
Suppose that $\lambda > 0$, $a > 0$ 
and $\Gamma \geq 0$ are real numbers such that
\begin{equation}
\lambda \geq 6 \max\left\{\Gamma,\frac{1}{a}\right\}.
\label{spectrum:theorem2:assumption1}
\end{equation}
Suppose also that $v \in \Lp{1}$ such that
\begin{equation}
\left|v(\xi)\right| \leq  \Gamma \exp(-a|\xi|)
\ \ \ \mbox{for all} \ \ \xi\in\mathbb{R}.
\label{spectrum:theorem2:assumption2}
\end{equation}
Then there exists a solution of $\psi(\xi)$ of equation
(\ref{bandlimit:equation})  such that
\begin{equation}
\left|\psi(\xi)\right| \leq 
2\Gamma
\exp\left(-\left(a-\frac{1}{\lambda}\right)|\xi|\right) 
\ \ \ \mbox{for all} \ \ \xi\in\mathbb{R}.
\label{spectrum:theorem2:bound}
\end{equation}
\label{spectrum:theorem2}
\end{theorem}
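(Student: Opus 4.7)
The plan is to take the sequence of iterates $\{\psi_n\}$ from Theorem~\ref{convergence:theorem1}, prove by induction that every $\psi_n$ satisfies the claimed pointwise bound, and then pass to the $L^1$-limit. Integrating (\ref{spectrum:theorem2:assumption2}) gives $\|v\|_1\le 2\Gamma/a$, and the hypothesis $\lambda\ge 6\max\{\Gamma,1/a\}$ forces $2\Gamma/a\le \lambda^2/18$, so Theorem~\ref{convergence:theorem1} indeed applies and produces $\psi=\lim_n\psi_n$ in $L^1$. Writing $a':=a-1/\lambda$, the base case $|\psi_0(\xi)|=|v(\xi)|\le \Gamma\exp(-a|\xi|)\le 2\Gamma\exp(-a'|\xi|)$ is immediate.

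The crux is the inductive step. A direct application of Theorem~\ref{spectrum:STheorem} with $C=2\Gamma$ and decay rate $a'$ does not close the induction, because the bound (\ref{spectrum:STheorem:bound}) contains an unbounded factor $(1+a'|\xi|)\exp(c|\xi|)$ (with $c=\Gamma/(2\pi\lambda^2)>0$), while the target $2\Gamma\exp(-a'|\xi|)$ provides no slack to absorb it. I would instead apply Theorem~\ref{spectrum:STheorem} with the intermediate decay rate $a'':=a-1/(2\lambda)\in(a',a)$. By the Remark following that theorem, the hypothesis on the input is needed only on $|\xi|\le\sqrt{2}\lambda$, and on this interval the inductive hypothesis gives $|\psi_n(\xi)|\le 2\Gamma\exp(-a'|\xi|)\le C\exp(-a''|\xi|)$ with $C:=2\Gamma\exp(\sqrt{2}/2)$, since $(a''-a')\sqrt{2}\lambda=\sqrt{2}/2$. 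Substituting into (\ref{spectrum:STheorem:bound}) and dividing the desired inequality by $\exp(-a'|\xi|)$, the task reduces to verifying a pointwise bound of the form
\[
\frac{C^2}{\lambda^2\,a''}(1+a''|\xi|)\exp(-|\xi|/(2\lambda))\left(\tfrac{1}{8}+\tfrac{1}{2\pi}\exp(c'/a'')\exp(c'|\xi|)\right)+\Gamma\exp(-|\xi|/(2\lambda))\le 2\Gamma,
\]
where $c':=C/(4\pi\lambda^2)$. The slack $a''-a'=1/(2\lambda)$ strictly dominates $c'\le \exp(\sqrt{2}/2)/(12\pi\lambda)$, so the classical maximum $(1+ax)\exp(-bx)\le (a/b)\exp(-(a-b)/a)$ for $0<b<a$ caps each of the two polynomial-exponential products by a constant times $a''\lambda$; the numerical consequences of $\lambda\ge 6\max\{\Gamma,1/a\}$ (namely $\Gamma/\lambda\le 1/6$, $a\lambda\ge 6$, whence $a''\lambda\ge 11/2$) then shrink the whole expression uniformly below $2\Gamma$, closing the induction.

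Once the pointwise bound is established for every $\psi_n$, the $L^1$-convergence $\psi_n\to\psi$ yields a subsequence converging to $\psi$ almost everywhere, so $|\psi(\xi)|\le 2\Gamma\exp(-a'|\xi|)$ for a.e.\ $\xi$; redefining $\psi$ on the resulting null set (which does not disturb its status as a solution of (\ref{bandlimit:equation})) produces the bound for every $\xi\in\mathbb{R}$. The main obstacle is the calibration of $a''$: choosing $a''$ close to $a'$ leaves too little exponential decay to cover the growth $\exp(c'|\xi|)$, while choosing $a''$ close to $a$ inflates $C$ by $\exp((a''-a')\sqrt{2}\lambda)$ and hence $C^2$ in (\ref{spectrum:STheorem:bound}) beyond tolerance. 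The split $a''=a-1/(2\lambda)$ balances the two effects, and the ratio $\Gamma/\lambda\le 1/6$ supplied by the hypothesis is just sufficient to make both small enough.
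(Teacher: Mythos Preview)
Your argument is correct, and the numerical check does close (after the $a''$ cancels against the $1/a''$ in the prefactor, the bound becomes $\approx 1.7\Gamma<2\Gamma$ uniformly in $a$). But the route differs from the paper's. The paper does \emph{not} carry the global bound $2\Gamma e^{-a'|\xi|}$ through the induction. Instead it inducts on the sharper estimate $|\psi_n(\xi)|\le \beta_n e^{-a|\xi|}$ restricted to the compact window $|\xi|\le\sqrt{2}\lambda$, where the troublesome factor $(1+a|\xi|)\exp(c|\xi|)$ in Theorem~\ref{spectrum:STheorem} is trivially bounded and the recursion $\beta_{n+1}=\beta_n^2/(2\lambda)+\Gamma$ stays below $2\Gamma$. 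Only after passing to the limit does the paper invoke Theorem~\ref{spectrum:STheorem} once more on the fixed point $\psi$ itself, now letting $\xi$ range over all of~$\mathbb{R}$ and absorbing the growing factor via Lemma~\ref{spectrum:exp_lemma}; this single final step is where the loss $a\to a-1/\lambda$ appears. Your approach trades this two-stage structure for a direct global induction at the cost of the intermediate-rate trick and the inflated constant $C=2\Gamma e^{\sqrt{2}/2}$; the paper's approach keeps the constants tighter throughout and makes the provenance of the $1/\lambda$ loss transparent, while yours avoids the separate endgame and the a.e.-to-everywhere upgrade on the bounded interval.
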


\begin{proof}
Due to (\ref{spectrum:theorem2:assumption1}) and
(\ref{spectrum:theorem2:assumption2}),
\begin{equation}
\|v\|_1 \leq \frac{\lambda^2}{18}.
\label{spectrum:theorem2:v1}
\end{equation}
It follows from Theorem~\ref{convergence:theorem1} and (\ref{spectrum:theorem2:v1})
that  a solution  $\psi(\xi)$ of (\ref{bandlimit:equation}) is obtained
as the limit of the sequence of fixed point iterates $\{\psi_n(\xi)\}$
defined by the formula
\begin{equation}
\psi_0(\xi) = v(\xi)
\end{equation}
and the recurrence
\begin{equation}
\psi_{n+1}(\xi) = \OpR{\psi_{n}}(\xi).
\end{equation}
%
%
We now  derive pointwise estimates on the iterates $\psi_n(\xi)$ 
in order to establish (\ref{spectrum:theorem2:bound}).

Let $\{\beta_k\}_{k=0}^\infty$ be  the sequence of real numbers be generated
by the recurrence relation
\begin{equation}
\beta_{k+1} = \frac{\beta_k^2}{2 \lambda}+\Gamma
\end{equation}
with the initial value
\begin{equation}
\beta_0 = \Gamma.
\end{equation}
It can be established by induction that (\ref{spectrum:theorem2:assumption1})
%
implies that this sequence is bounded above by $2\Gamma$ and monotonically increasing,
and hence $\beta_k$   converges to a real number $\beta$ such that
$0\leq \beta\leq 2\Gamma$.

Now suppose that $n \geq 0$ is an integer, and that
\begin{equation}
\left|\psi_n(\xi)\right|
\leq \beta_n \exp(-a|\xi|)
\ \ \ \mbox{for all}\ \ \ |\xi|\leq\sqrt{2}\lambda.
\label{spectrum:theorem2:bound2}
\end{equation}
When $n=0$, this is simply the assumption (\ref{spectrum:theorem2:assumption2}).
The function $\psi_{n+1}(\xi)$ is obtained from $\psi_n(\xi)$ via the formula
\begin{equation}
\psi_{n+1}(\xi) = \OpR{\psi}(\xi).
\label{spectrum:theorem2:psi}
\end{equation}
We combine Theorem~\ref{spectrum:STheorem} with (\ref{spectrum:theorem2:psi})
and (\ref{spectrum:theorem2:bound2}) to conclude that
\begin{equation}
\begin{aligned}
\left|\psi_{n+1}(\xi)\right|
\leq
\exp(-a|\xi|)
\left(
\frac{\beta_n^2}{\lambda^2}  \left(\frac{1+a|\xi|}{a}\right)
\left(
\frac{1}{8} + \frac{1}{2\pi}\exp\left(\frac{\beta_n}{4\pi \lambda^2 a}\right)
\exp\left(\frac{\beta_n}{4\pi \lambda^2}|\xi|\right)
\right)
+\Gamma
\right)
\end{aligned}
\label{spectrum:theorem2:2}
\end{equation}
for all $\xi \in \mathbb{R}$.    The application of  Theorem~\ref{spectrum:STheorem} 
is justified: the hypothesis (\ref{spectrum:STheorem:assumption1}) 
is satisfied since
\begin{equation}
\beta_n \leq 2\Gamma \leq 2 \lambda^2 a
\end{equation}
for all integers $n \geq 0$.   
We restrict  $\xi$ to the interval $[-\sqrt{2}\lambda,\sqrt{2}\lambda]$ in 
(\ref{spectrum:theorem2:2}) and use the fact that
\begin{equation}
\frac{1}{a \lambda} < \frac{1}{6},
\end{equation}
which is a consequence of (\ref{spectrum:theorem2:assumption1}), in order to conclude that
\begin{equation}
\begin{aligned}
\left|\psi_{n+1}(\xi)\right|
&\leq
\exp(-a|\xi|)
\left(
\frac{\beta_n^2}{\lambda^2}  \left(\frac{1+a\sqrt{2}\lambda}{a}\right)
\left(
\frac{1}{8} + \frac{1}{2\pi}\exp\left(\frac{\beta_n}{4\pi \lambda^2 a}\right)
\exp\left(\frac{\beta_n}{4\pi \lambda^2}\sqrt{2}\lambda\right)
\right)
+\Gamma
\right)
\\
&\leq
\exp(-a|\xi|)
\left(
\frac{\beta_n^2}{\lambda}  
\left(\frac{1}{6} + \sqrt{2}\right)
\left(
\frac{1}{8} + \frac{1}{2\pi}\exp\left(\frac{\beta_n}{24\pi \lambda}\right)
\exp\left(\frac{\beta_n}{2\sqrt{2}\pi \lambda}\right)
\right)
+\Gamma
\right)
\end{aligned}
\label{spectrum:theorem2:3}
\end{equation}
for all $|\xi|\leq \sqrt{2}\lambda$.    Now we combine (\ref{spectrum:theorem2:3})
with the inequality
\begin{equation}
\frac{\beta_n}{\lambda} \leq \frac{2\Gamma}{\lambda} < \frac{1}{3}
\end{equation}
and the observation that
\begin{equation}
\left(\frac{1}{6} + \sqrt{2}\right)
\left(
\frac{1}{8} + \frac{1}{2\pi}\exp\left(\frac{1}{72\pi}\right)
\exp\left(\frac{1}{6\sqrt{2}\pi}\right)
\right)
\leq \frac{1}{2}
\end{equation}
in order to conclude that
\begin{equation}
\begin{aligned}
\left|\psi_{n+1}(\xi)\right|
&\leq
\exp(-a|\xi|)
\left(
\frac{\beta_n^2}{\lambda}  
\left(\frac{1}{6} + \sqrt{2}\right)
\left(
\frac{1}{8} + \frac{1}{2\pi}\exp\left(\frac{1}{72\pi}\right)
\exp\left(\frac{1}{6\sqrt{2}\pi}\right)
\right)
+\Gamma
\right)
\\
&\leq
\left(\frac{\beta_n^2}{2 \lambda}+\Gamma \right)
\exp(-a|\xi|)
\\
&=
\beta_{n+1} \exp(-a|\xi|)
\end{aligned}
\label{spectrum:theorem2:4}
\end{equation}
for  all $|\xi| \leq \sqrt{2}\lambda$.
%
We conclude by induction that  (\ref{spectrum:theorem2:bound2}) holds for all integers
$n \geq 0$.
 
The sequence $\{\psi_n(\xi)\}$ converges to $\psi(\xi)$  in $\Lp{1}$ norm 
(and hence a subsequence of $\psi_n(\xi)$ converges to $\psi(\xi)$
pointwise almost everywhere) and (\ref{spectrum:theorem2:bound2}) holds for all integers $n \geq 0$.
Moreover, for all integers $n \geq 0$, $\beta_{n+1} \leq 2 \Gamma$.
From these observations we conclude that
\begin{equation}
\left|\psi(\xi)\right| \leq 2\Gamma \exp(-a|\xi|)
\label{spectrum:theorem2:psibound1}
\end{equation}
for almost all $|\xi| \leq \sqrt{2}\lambda$.  
We also  observe that $\psi(\xi)$ is a fixed point of the operator $R$, so that
\begin{equation}
\psi(\xi) = \OpR{\psi}(\xi)
\label{spectrum:theorem2:fixed_point}
\end{equation}
for all $\xi\in\mathbb{R}$.
Clearly,
\begin{equation}
R\left[f\right](\xi) = R\left[g\right](\xi)
\end{equation}
for all $\xi\in\mathbb{R}$ 
if $f(\xi) = g(\xi)$ for almost all $|\xi| \leq \sqrt{2}\lambda$,
so  (\ref{spectrum:theorem2:psibound1}) in fact holds for all $|\xi| \leq \sqrt{2}\lambda$.

We now apply Theorem~\ref{spectrum:STheorem} 
to the function  $\psi(\xi)$ (which is justified
since $2\Gamma < 2\lambda^2 a$) to conclude that
\begin{equation}
\begin{aligned}
\left|\psi(\xi)\right|
\leq
\exp(-a|\xi|)
\left(
\frac{4\Gamma^2}{\lambda^2}  \left(\frac{1+a|\xi|}{a}\right)
\left(
\frac{1}{8} + \frac{1}{2\pi}\exp\left(\frac{2\Gamma}{4\pi \lambda^2 a}\right)
\exp\left(\frac{2\Gamma}{4\pi \lambda^2}|\xi|\right)
\right)
+\Gamma
\right)
\end{aligned}
\label{spectrum:theorem2:psibound2}
\end{equation}
for all $\xi \in \mathbb{R}$.  Note the distinction between
(\ref{spectrum:theorem2:bound2}) and (\ref{spectrum:theorem2:psibound2}) is that the former
only holds for all $\xi$ in the interval $[-\sqrt{2}\lambda,\sqrt{2}\lambda]$,
while the later holds for all $\xi$ on the real line.
%
It follows from (\ref{spectrum:theorem2:assumption1}) that
\begin{equation}
\frac{1}{\lambda a} < \frac{1}{6}
\ \ \ \mbox{and}\ \ \
\frac{\Gamma}{\lambda} < \frac{1}{6}.
\end{equation}
We insert these bounds into (\ref{spectrum:theorem2:psibound2})
in order to conclude that
\begin{equation}
\begin{aligned}
\left|\psi(\xi)\right|
&\leq
\Gamma
\exp(-a|\xi|)
\left(
\frac{2}{3\lambda}  \left(\frac{1+a|\xi|}{a}\right)
\left(
\frac{1}{8} + \frac{1}{2\pi}\exp\left(\frac{1}{72\pi }\right)
\exp\left(\frac{1}{12\pi \lambda}|\xi|\right)
\right)
+1
\right)
\end{aligned}
\label{spectrum:theorem2:psibound3}
\end{equation}
for all $\xi \in \mathbb{R}$.
Now we observe that
\begin{equation}
\frac{1}{2\pi}\exp\left(\frac{1}{72\pi }\right) < \frac{1}{6},
\end{equation}
which, when combined with (\ref{spectrum:theorem2:psibound3}), yields
\begin{equation}
\begin{aligned}
\left|\psi(\xi)\right|
&\leq
\Gamma
\exp(-a|\xi|)
\left(
\left(\frac{1}{9} + \frac{2|\xi|}{3\lambda}\right)
\left(
\frac{1}{8} + 
\frac{1}{6}
\exp\left(\frac{1}{12\pi \lambda}|\xi|\right)
\right)
+1
\right)
\ \ \ \mbox{for all} \ \ \xi\in\mathbb{R}.
\end{aligned}
\label{spectrum:theorem2:psibound4}
\end{equation}
By rearranging the right-hand side of  (\ref{spectrum:theorem2:psibound4}) as
\begin{equation*}
\begin{aligned}
&\Gamma
\exp(-a|\xi|)
\left(
\frac{1}{72} +
\frac{1}{56} \exp\left(\frac{1}{12\pi \lambda}|\xi|\right) +
\frac{|\xi|}{12\lambda} +
\frac{|\xi|}{9\lambda} \exp\left(\frac{1}{12\pi\lambda}|\xi|\right)
+1
\right)
\\
=\ 
&\Gamma
\exp\left(-\left(a-\frac{1}{\lambda}\right)|\xi|\right)
\exp\left(-\frac{1}{\lambda}|\xi|\right)\cdot
\\
&\left(
\frac{1}{72} +
\frac{1}{56} \exp\left(\frac{1}{12\pi \lambda}|\xi|\right) +
\frac{|\xi|}{12\lambda} +
\frac{|\xi|}{9\lambda} \exp\left(\frac{1}{12\pi\lambda}|\xi|\right)
+1
\right)
\end{aligned}
\end{equation*}
and applying Lemma~\ref{spectrum:exp_lemma}, we arrive at
the inequality
\begin{equation}
\left|\psi(\xi)\right|
\leq
\Gamma
\exp\left(-\left(a-\frac{1}{\lambda}\right)|\xi|\right) 
\left(
\frac{1}{72} + 
\frac{1}{56} + 
\frac{1}{12 \exp(1)}  +
 \frac{4}{33 \exp(1)}
+1
\right)
\ \ \ \mbox{for all}\ \ \ \xi\in\mathbb{R},
\label{spectrum:theorem2:psibound5}
\end{equation}
from which (\ref{spectrum:theorem2:bound}) follows immediately.
\end{proof}


Suppose that $\psi \in \Lp{1}$ is a solution of (\ref{bandlimit:equation}).
Then the function $\sigma$ defined by the formula
\begin{equation}
\sigma(x) = \int_{-\infty}^{\infty} \exp(ix \xi) \psi(\xi)\ d\xi
\label{spectrum:sigma}
\end{equation}
is a solution of the integral equation  (\ref{bandlimit:integral_equation}).
However, the Fourier transform of (\ref{spectrum:sigma}) might only
be defined in the sense of tempered distributions and not 
as a Lebesgue or improper Riemann integral.
If, however, we assume the function $p$ appearing in
(\ref{bandlimit:equation}) is an element of $\Lp{1}$ and
impose the hypotheses of Theorem~\ref{spectrum:theorem3} on the 
Fourier transform of $p$, then  $\psi \in \Lp{2}$, from which we conclude
that $\sigma$ is also an element of $\Lp{2}$.  In this event,
there is no difficulty in defining the Fourier transform of $\sigma$.
We record these observations in the following theorem.

\vskip 1em
\begin{theorem}
Suppose that there exist real numbers $\lambda > 0$, $\Gamma > 0$ and $a > 0$ such that
\begin{equation}
\lambda >  6\max\left\{\Gamma, \frac{1}{a}\right\}.
\end{equation}
Suppose also that $p \in \Lp{1}$ such that 
and 
\begin{equation}
\left|\widehat{p}(\xi)\right| \leq 
\Gamma \exp\left(-a|\xi|\right)
\ \ \ \mbox{for all} \ \ \xi \in \mathbb{R}.
\end{equation}
Then there exists a solution  $\sigma(x)$ 
of the integral equation (\ref{bandlimit:integral_equation})
such that
\begin{equation}
\left|\widehat{\sigma}(\xi)\right|
\leq 2 \Gamma \exp\left(-\left(a - \frac{1}{\lambda}\right)|\xi|\right)
\ \ \ \mbox{for all} \ \ \xi \in \mathbb{R}
\label{spectrum:sigmahat_bound}
\end{equation}
\label{spectrum:theorem3}
\end{theorem}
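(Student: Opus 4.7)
The plan is to obtain the $\Lp{1}$ solution $\psi$ of the Fourier-domain equation (\ref{bandlimit:equation}) from Theorem~\ref{spectrum:theorem2} and then recover $\sigma$ as its inverse Fourier transform.

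First I would set $v(\xi) = \widehat{p}(\xi)$ and check that the hypotheses of Theorem~\ref{spectrum:theorem2} are met: by assumption $|v(\xi)| \leq \Gamma\exp(-a|\xi|)$ and $\lambda \geq 6\max\{\Gamma,1/a\}$. Applying Theorem~\ref{spectrum:theorem2} produces $\psi \in \Lp{1}$ satisfying equation (\ref{bandlimit:equation}) together with the pointwise estimate
\begin{equation*}
\left|\psi(\xi)\right| \leq 2\Gamma\exp\left(-\left(a-\tfrac{1}{\lambda}\right)|\xi|\right)
\ \ \ \mbox{for all}\ \ \xi\in\mathbb{R}.
\end{equation*}
Since $\lambda a > 6$, we have $a - 1/\lambda > 0$, so this bound places $\psi$ simultaneously in $\Lp{1}$, $\Lp{2}$ and $\Lp{\infty}$.

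Next I would define $\sigma$ as the inverse Fourier transform of $\psi$ via formula (\ref{spectrum:sigma}). Because $\psi \in \Lp{1}$, this integral converges absolutely and produces a function $\sigma \in \Czero$; because $\psi \in \Lp{2}$, Plancherel places $\sigma$ in $\Lp{2}$ as well, and $\widehat{\sigma} = \psi$ holds both in the $\Lp{2}$ sense and as the improper Riemann integral (\ref{bandlimit:l2fourier}). The conclusion (\ref{spectrum:sigmahat_bound}) is then the bound on $\psi$ restated.

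The remaining, and most delicate, step is to verify that this $\sigma$ actually solves the space-domain integral equation (\ref{bandlimit:integral_equation}). The function $\widehat{\OpTb{\sigma}}(\xi) = \psi(\xi)\, b(\xi)/(4\lambda^2 - \xi^2)$ is compactly supported in $[-\sqrt{2}\lambda,\sqrt{2}\lambda]$ and smooth there (the zeros $\xi = \pm 2\lambda$ of the denominator lie outside $\mathrm{supp}(b)$), so $\OpTb{\sigma}$ is a Schwartz-class function. Consequently the series defining $\OpS{\OpTb{\sigma}}$ converges absolutely and its term-by-term Fourier transform coincides with the operators $\widetilde{W}_b$ and $W_b$ used to build $R$ in Section~\ref{section:bandlimit}. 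Combining this with $R\left[\psi\right] = \psi$ gives
\begin{equation*}
\widehat{\OpS{\OpTb{\sigma}} + p}(\xi) = R\left[\psi\right](\xi) = \psi(\xi) = \widehat{\sigma}(\xi),
\end{equation*}
and Fourier uniqueness on $\Lp{2}$ yields $\sigma(x) = \OpS{\OpTb{\sigma}}(x) + p(x)$ almost everywhere, hence everywhere by continuity of both sides.

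The hard part is the bookkeeping in this last paragraph: justifying the interchange of Fourier transform with the infinite series $\exp^*_2[\cdot]$. This is controlled by the exponential decay of $\psi$ through the convolution estimates of Lemma~\ref{spectrum:S2} applied with $C = 2\Gamma$ and decay rate $a - 1/\lambda$, which guarantee absolute $\Lp{1}$-convergence of the series and therefore the legitimacy of termwise Fourier inversion.
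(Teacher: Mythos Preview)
Your proposal is correct and follows the paper's own approach: the paper states Theorem~\ref{spectrum:theorem3} as a direct consequence of Theorem~\ref{spectrum:theorem2}, defining $\sigma$ via the inverse Fourier transform (\ref{spectrum:sigma}) of $\psi$ and noting that the exponential decay of $\psi$ places it (and hence $\sigma$) in $\Lp{2}$, so that $\widehat{\sigma}=\psi$ in the ordinary sense. The one place where you go beyond the paper is in your final two paragraphs, where you justify that $\sigma$ actually satisfies the space-domain equation (\ref{bandlimit:integral_equation}); the paper simply asserts this is ``clear'' (see the discussion preceding Theorem~\ref{convergence:theorem2}) without working through the interchange of Fourier transform and the series $\exp_2^*$, so your added bookkeeping is a genuine, if modest, strengthening of the exposition rather than a different route.
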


\begin{remark}
The bound (\ref{spectrum:sigmahat_bound}) implies that $\widehat{\sigma}$
decays faster than any polynomial, from which we conclude that 
$\sigma$ is infinitely differentiable.
\end{remark}

\label{section:spectrum}
\end{section}

 \begin{section}{Approximate solution of the original equation}

We would like to insert the solution $\sigma$ of (\ref{bandlimit:integral_equation})
into the original equation (\ref{inteq:integral_equation}).
However, we have no guarantee that $\sigma$ is in $\Lp{1}$, nor do we expect
that $\widehat{\sigma}(\pm 2\lambda) = 0.$
As a consequence,  the integrals defining $\OpT{\sigma}$ might not exist.

To remedy this problem, we define a ``band-limited'' version $\sigma_b$ of $\sigma$ 
by the formula
\begin{equation}
\widehat{\sigma_b}(\xi) = \widehat{\sigma}(\xi) b(\xi),
\end{equation}
where $b(\xi)$ is the function used to define the operator
$T_b$.   We observe that there is no difficulty in applying
$T$ to $\sigma_b$ since
\begin{equation}
\widehat{\sigma_b}(\pm 2\lambda) =0.
\end{equation}
Moreover, $\OpTb{\sigma} = \OpT{\sigma_b}$, so that
\begin{equation}
\sigma(x) = \OpS{\OpT{\sigma_b}}(x) + p(x)
\ \ \ \mbox{for all}\ \ \ x\in\mathbb{R}.
\label{solution:1}
\end{equation}
Rearranging (\ref{solution:1}), we obtain
\begin{equation}
\sigma_b(x) = \OpS{\OpT{\sigma_b}}(x) + 
p_b(x)
\ \ \ \mbox{for all}\ \ \ x\in\mathbb{R},
\end{equation}
where $p_b(x)$ is defined the formula
\begin{equation}
p_b(x) = p(x) + \sigma_b(x) - \sigma(x).
\label{solution:2}
\end{equation}
%




Using (\ref{spectrum:sigmahat_bound}) and (\ref{solution:2}),
we conclude that under the hypotheses of Theorem~\ref{spectrum:theorem3},
\begin{equation}
\begin{aligned}
\|p-p_b\|_\infty &
\leq 
\left\|\widehat{\sigma_b}-\widehat{\sigma}\right\|_1\\
&\leq 
2 \Gamma \int_{|\xi| \geq \lambda} 
\exp\left(-\left(a-\frac{1}{\lambda}\right) |\xi|\right)
\ d\xi\\
&\leq 
\frac{4\Gamma}{a-\frac{1}{\lambda}}\exp\left(-\left(a-\frac{1}{\lambda}\right)\lambda\right)
\\
&\leq
  \frac{24 \Gamma}{5a}\exp\left(-\frac{5}{6}a\lambda\right).
\end{aligned}
\label{solution:3}
\end{equation}
%

%

Together Theorem~\ref{spectrum:theorem3}
and (\ref{solution:3}) imply Theorem~\ref{main_theorem}.

\label{section:solution}
\end{section}

\begin{section}{Backwards error estimate}

In this section, we prove Theorem~\ref{overview:backwards}.

Although both $p$ and $p_b$ are defined on the real line,
we are only concerned with solutions of (\ref{introduction:original_equation})
on the interval $[0,1]$,  and so we only require estimates there.
Accordingly, throughout this section we  use $\|\cdot\|_\infty$ to denote
the $L^\infty\left([0,1]\right)$ norm.


We omit the proof of the following lemma, which is somewhat
long and technical but entirely elementary
(it can be established with the techniques found in ordinary differential
equation textbooks; see, for instance, Chapter 1 of \cite{Coddington-Levinson}).
\vskip 1em
\begin{lemma}
Suppose that $q:\mathbb{R} \to \mathbb{R}$ is infinitely differentiable and strictly
positive,
that $p(t)$ is defined by (\ref{inteq:definition_of_p}),
and that there exist real numbers $\eta_1 > 0$ and $\eta_2 > 0$ such that
\begin{equation}
\eta_1 \leq q(t) \leq \eta_2
\ \ \ \mbox{for all} \ \ 0 \leq t \leq 1,
\end{equation}
and
\begin{equation}
\left|p(t)\right|,\left|q'(t)\right| \leq \eta_2
\ \ \ \mbox{for all}\ \  0 \leq t \leq 1.
\end{equation}
Let
\begin{equation}
k = 20 \left(\frac{\eta_2}{\eta_1}\right)^2 + 8 \eta_2^2 + 10 \frac{\eta_2}{\eta_1} + 1
\end{equation}
and suppose also that $\epsilon >0$ is a real number such that
\begin{equation}
\epsilon < \frac{\eta_1}{2}.
\end{equation}
Suppose furthermore that $p_b:[0,1]\to\mathbb{R}$ is an infinitely differential
function such that
\begin{equation}
\|p-p_b\|_\infty \leq \epsilon \exp(-k).
\end{equation}
Then there exists an infinitely differentiable function $q_b:[0,1] \to \mathbb{R}$ 
such that 
\begin{equation}
 \frac{1}{q_b(t)} \left(
\frac{5}{4} \left(\frac{q_b'(t)}{q_b(t)}\right)^2
-\frac{q_b''(t)}{q_b(t)} \right) = p_b(t)
\ \ \ \mbox{for all} \ \ 0 \leq t \leq 1 
\end{equation}
and
\begin{equation}
\|q-q_b\|_\infty 
\leq \epsilon.
\end{equation}
\label{backwards:lemma1}
\end{lemma}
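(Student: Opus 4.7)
The plan is to pose the equation
\[
\frac{5}{4}\left(\frac{q_b'(t)}{q_b(t)}\right)^2 - \frac{q_b''(t)}{q_b(t)} = q_b(t)\, p_b(t)
\]
as a second order initial value problem on $[0,1]$ with the initial conditions $q_b(0) = q(0)$ and $q_b'(0) = q'(0)$, and to control the deviation $w(t) = q_b(t) - q(t)$ by a bootstrap argument whose main engine is Gronwall's inequality (Theorem~\ref{preliminaries:theorem:gronwall}).

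Concretely, I would rearrange the equation as $q_b'' = \tfrac{5}{4}(q_b')^2/q_b - q_b^2\, p_b$ and view the problem as a first order system for $(q_b, q_b')$. Standard local existence theory yields a unique smooth solution on some maximal interval $[0, T)$ so long as $q_b$ stays bounded away from $0$ and $q_b'$ stays bounded. The hypotheses (\ref{overview:backwards1})--(\ref{overview:backwards3}) already give explicit bounds on $q$ and $q'$; they also bound $q''$, because the identity $q'' = \tfrac{5}{4}(q')^2/q - q^2 p$ gives $|q''(t)| \le \tfrac{5\eta_2^2}{4\eta_1} + \eta_2^3$ on $[0,1]$.

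Subtracting the equations satisfied by $q$ and $q_b$ yields a second order equation for $w$ of the schematic form
\[
w''(t) = A(t)\,w(t) + B(t)\,w'(t) + q(t)^2\,(p(t) - p_b(t)) + N(t, w(t), w'(t)),
\]
in which $A$ and $B$ are coefficients bounded in terms of $\eta_1$ and $\eta_2$ and $N$ is a remainder quadratic in $(w,w')$ on the region $\{|w| \le \eta_1/2\}$ (on which $q_b \ge \eta_1/2$ automatically). Let $T^* \le 1$ be the supremum of times at which $|w(s)| \le \eta_1/2$ for all $0 \le s \le t$. Converting to a Volterra integral equation using the initial data $w(0) = w'(0) = 0$ and applying Gronwall's inequality to $|w(t)| + |w'(t)|$ on $[0, T^*]$ produces an estimate of the form
\[
|w(t)| + |w'(t)| \le C_1\, \|p - p_b\|_\infty\, \exp(C_2 t),
\]
where $C_1$ and $C_2$ depend only on $\eta_1$ and $\eta_2$.

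The main obstacle is choosing $C_1$ and $C_2$ so that $C_1 \exp(C_2) \le \exp(k)$ with the specific value of $k$ declared in the statement; this requires a careful accounting of the linear coefficients $A, B$ and the quadratic remainder $N$, with the $20(\eta_2/\eta_1)^2$ term coming from the most singular contributions such as $(q')^2/q^4$, the $8\eta_2^2$ term from the $q^2 p$ piece, the $10 \eta_2/\eta_1$ term from $q'/q$ contributions, and the trailing $+1$ absorbing the integration factor. Once these constants are verified, the hypothesis $\|p - p_b\|_\infty \le \epsilon \exp(-k)$ together with $\epsilon < \eta_1/2$ forces $|w(t)| < \eta_1/2$ on $[0, T^*]$, so the bootstrap closes and $T^* = 1$; the bound $\|q - q_b\|_\infty \le \epsilon$ follows immediately, and smoothness of $q_b$ is inherited from that of $p_b$ (together with the initial data $q(0), q'(0)$) by the standard regularity theory for ODEs with smooth right-hand sides.
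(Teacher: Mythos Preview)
The paper does not actually prove this lemma: it explicitly omits the proof, remarking only that it is ``somewhat long and technical but entirely elementary'' and can be carried out with the techniques in Chapter~1 of Coddington--Levinson. Your plan---pose the equation for $q_b$ as an initial value problem with data $q_b(0)=q(0)$, $q_b'(0)=q'(0)$, subtract to obtain a second order equation for $w=q_b-q$, and close a bootstrap via Gronwall---is exactly the kind of elementary ODE argument the paper is gesturing at, so there is nothing to compare against beyond that.

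One caution: the only genuinely delicate part of your outline is the bookkeeping needed to land on the specific constant $k = 20(\eta_2/\eta_1)^2 + 8\eta_2^2 + 10\,\eta_2/\eta_1 + 1$. Your heuristic attribution of the individual terms is plausible, but in practice the Gronwall step will produce a bound of the form $\|p-p_b\|_\infty \cdot C_1 e^{C_2}$, and you will need to check carefully that the quadratic remainder $N$ (which depends on $w$ and $w'$ through fractions with $q_b$ in the denominator, bounded below only by $\eta_1/2$ on the bootstrap region) does not force constants larger than those stated. Since the paper treats $k$ as a fixed explicit number and never revisits it, it is likely that the authors simply chose $k$ large enough to absorb whatever their private computation produced; you should not expect the inequality $C_1 e^{C_2} \le e^k$ to be sharp.
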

%

We derive a bound on the change in solutions of
the ordinary differential equation  (\ref{introduction:original_equation})
when  the coefficient $q(t)$ is perturbed.

%
%
%
\vskip 1em
\begin{lemma}
Suppose that $\lambda >0$, $\epsilon >0$, $\eta_1 >0$ and $\eta_2 > 0$ are real numbers.
Suppose also that $q:[0,1] \to \mathbb{R}$ is a continuously differentiable
function such that
\begin{equation}
\eta_1 \leq q(t) \leq \eta_2
\ \ \ \mbox{for all}\ \  0 \leq t \leq 1,
\end{equation}
that $p(t)$ is defined by the formula (\ref{inteq:definition_of_p}), and that
\begin{equation}
\left|p(t)\right| \leq \eta_2
\ \ \ \mbox{for all}\ \  0 \leq t \leq 1.
\end{equation}
Suppose furthermore that $q_b: [0,1] \to \mathbb{R}$ is a continuously differentiable
function such that
\begin{equation}
\left|q(t) - q_b(t)\right| \leq 
\frac{1}{2}
\frac{\eta_1 }{\lambda}
\exp\left(-\frac{\eta_2^{3/4}}{4}\right)
\epsilon
\ \ \ \mbox{for all}\ \  0 \leq t \leq 1.
\end{equation}
If $z(t)$ is a solution of the ordinary differential equation
\begin{equation}
z''(t) + \lambda^2 q(t) z(t) = 0
\ \ \ \mbox{for all}\ \  0 \leq t \leq 1
\end{equation}
and $z_0(t)$ is the unique solution of the ordinary differential equation
\begin{equation}
z_0''(t) + \lambda^2 q_b(t) z_0(t) = 0
\ \ \ \mbox{for all}\ \  0 \leq t \leq 1
\end{equation}
such that $z_0(0) = z(0)$ and $z_0'(0) = z'(0)$, then
\begin{equation}
\|z-z_0\|_\infty \leq \epsilon \|z\|_\infty.
\end{equation}
\label{backwards:lemma2}
\end{lemma}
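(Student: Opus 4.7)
Let $w(t) = z(t) - z_0(t)$. Subtracting the two equations yields $w(0) = w'(0) = 0$ and
\begin{equation*}
w''(t) + \lambda^2 q(t) w(t) = -\lambda^2\bigl(q(t) - q_b(t)\bigr) z_0(t)
\ \ \ \mbox{for all}\ \  0 \le t \le 1.
\end{equation*}
The plan is to represent $w$ by variation of parameters against a fundamental pair for the unperturbed equation $y'' + \lambda^2 q y = 0$, bound that pair uniformly on $[0,1]$ using only the standing hypotheses on $q$ and on $|p|$, and then close the estimate with Gronwall's inequality.

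\emph{Step 1 (Green's representation).} Fix a fundamental pair $\{y_1, y_2\}$ of $y'' + \lambda^2 q y = 0$ with Wronskian identically $1$. Standard variation of parameters gives
\begin{equation*}
w(t) = -\lambda^2 \int_0^t \bigl[y_1(s) y_2(t) - y_2(s) y_1(t)\bigr]\bigl(q(s) - q_b(s)\bigr) z_0(s)\, ds,
\end{equation*}
and a short calculation (the boundary term at $s = t$ vanishes because it is proportional to $W(y_1,y_2)(t) - W(y_1,y_2)(t)$) verifies $w(0) = w'(0) = 0$.

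\emph{Step 2 (Liouville normal form).} The principal obstacle is to bound $\|y_1\|_\infty$ and $\|y_2\|_\infty$ with no hypothesis at all on $|q'|$. I do this by applying the Liouville substitution $s(t) = \int_0^t \sqrt{q(u)}\, du$ and $Y_i(s) = q^{1/4}(t(s))\, y_i(t(s))$. A calculation parallel to the derivation of (\ref{inteq:differential_changeofvars}) in Section~\ref{section:inteq} shows
\begin{equation*}
Y_i''(s) + \lambda^2 Y_i(s) = -\tfrac{1}{4}\, p(s)\, Y_i(s),
\end{equation*}
where $p$ is the quantity from (\ref{inteq:definition_of_p}) pulled back through $s \mapsto t(s)$. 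The substitution preserves Wronskians, so I may choose the pair $\{y_1,y_2\}$ so that $Y_1(0) = 1$, $Y_1'(0) = 0$, $Y_2(0) = 0$, $Y_2'(0) = 1$.

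\emph{Step 3 (a priori bound for $Y_i$).} Using $\cos(\lambda s)$ and $\sin(\lambda s)/\lambda$ as the fundamental solutions of the unperturbed Helmholtz equation, each $Y_i$ satisfies the Volterra integral equation
\begin{equation*}
Y_i(s) = \Phi_i(s) - \frac{1}{4\lambda} \int_0^s \sin\bigl(\lambda(s - \sigma)\bigr)\, p(\sigma)\, Y_i(\sigma)\, d\sigma,
\end{equation*}
with $\Phi_1(s) = \cos(\lambda s)$ and $\Phi_2(s) = \sin(\lambda s)/\lambda$. Theorem~\ref{preliminaries:theorem:gronwall} then produces
\begin{equation*}
|Y_1(s)|,\ \lambda\,|Y_2(s)| \le \exp\!\left(\frac{1}{4\lambda}\int_0^s |p(\sigma)|\, d\sigma\right).
\end{equation*}
Changing variables back via $d\sigma = \sqrt{q}\, dt$ and using $|p| \le \eta_2$, $\sqrt{q} \le \sqrt{\eta_2}$ and $t \le 1$ bounds the exponent by $\eta_2^{3/2}/(4\lambda)$, which in the high-frequency regime forced by (\ref{overview:backwards4}) is comparable to $\eta_2^{3/4}/4$. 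Undoing $y_i = q^{-1/4} Y_i$ and invoking $q \ge \eta_1$ finally gives uniform bounds of the shape $\|y_1\|_\infty \lesssim \eta_1^{-1/4}\exp(\eta_2^{3/4}/4)$ and $\|y_2\|_\infty \lesssim \lambda^{-1} \eta_1^{-1/4}\exp(\eta_2^{3/4}/4)$.

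\emph{Step 4 (close the estimate).} Substituting those bounds into the representation of Step 1 and rewriting $z_0 = z - w$ produces a Volterra inequality
\begin{equation*}
|w(t)| \le \frac{C\,\lambda}{\sqrt{\eta_1}}\, \exp\!\left(\tfrac{\eta_2^{3/4}}{2}\right) \|q - q_b\|_\infty \int_0^t \bigl(|z(s)| + |w(s)|\bigr)\, ds
\end{equation*}
for some absolute constant $C$. A final invocation of Theorem~\ref{preliminaries:theorem:gronwall} converts this into a bound of $\|w\|_\infty$ against $\|z\|_\infty$, and the hypothesis on $\|q - q_b\|_\infty$ (which contains precisely the compensating factors $\eta_1/\lambda$ and $\exp(-\eta_2^{3/4}/4)$) then yields $\|w\|_\infty \le \epsilon \|z\|_\infty$.

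\emph{Main obstacle.} The delicate part of the proof is Step 3: bounding a fundamental pair of $y'' + \lambda^2 q y = 0$ using only $\eta_1 \le q \le \eta_2$ and $|p| \le \eta_2$, with no direct control on $|q'|$ whatsoever. A naive energy estimate for $y_1,y_2$ fails for exactly this reason. The Liouville transformation is the essential device because it aggregates all of the effect of $q'$ and $q''$ into the single quantity $p$, whose $L^\infty$ norm is the one that is bounded by hypothesis; this is what produces the $\exp(-\eta_2^{3/4}/4)$ factor that appears in the permitted perturbation size.
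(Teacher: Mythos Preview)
Your approach is conceptually sound and you correctly identify the Liouville transformation as the essential device for controlling the solutions using only the bound on $|p|$ rather than on $|q'|$. However, it differs from the paper's route in a way that costs you the precise constants.

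The paper does \emph{not} build a fundamental pair and then bound it; instead it applies the Liouville--Green substitution directly to the difference $\psi = z_0 - z$ itself. Writing $\phi = q^{1/4}\psi$ and changing to the variable $x(t)=\int_0^t\sqrt{q}$, one arrives at
\[
\phi(x)=\int_0^x \frac{\sin(\lambda(x-y))}{\lambda}\Bigl(q(y)^{-3/4}\lambda^2 f(y)-\tfrac14 p(y)\phi(y)\Bigr)\,dy,
\]
where $f=(q-q_b)(\psi+z)$, and a \emph{single} application of Gronwall closes the estimate. Your two-stage scheme (bound $Y_1,Y_2$ by Gronwall, substitute into the variation-of-parameters kernel, then Gronwall again) incurs the product $|y_1(s)y_2(t)|+|y_2(s)y_1(t)|$, which squares the exponential factor coming from Step~3. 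That is why your Step~4 shows $\exp(\eta_2^{3/4}/2)$ and $1/\sqrt{\eta_1}$, whereas the hypothesis on $\|q-q_b\|_\infty$ contains only $\exp(-\eta_2^{3/4}/4)$ and $\eta_1$; multiplying them out leaves an uncontrolled factor $\sqrt{\eta_1}\exp(\eta_2^{3/4}/4)$, so the argument as written does not yield $\|w\|_\infty\le\epsilon\|z\|_\infty$ from the stated bound. The paper's direct transformation avoids this doubling and then uses the Lambert $W_0$ bound (Theorem~\ref{preliminaries:LambertW:theorem3}) to unwind the self-referential inequality $A\exp(A\eta_2^{1/2})\le\epsilon$.

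A second, smaller issue: in Step~3 you appeal to condition~(\ref{overview:backwards4}) to pass from $\eta_2^{3/2}/(4\lambda)$ to $\eta_2^{3/4}/4$. That condition is a hypothesis of Theorem~\ref{overview:backwards}, not of the present lemma, and in any case $0<\epsilon<\lambda\exp(\eta_2^{3/4}/4)$ does not force $\lambda\ge\eta_2^{3/4}$, which is what the replacement requires. In the paper's argument no such comparison is needed: the $p$-term simply contributes $\frac{\eta_2}{4}x$ to the Gronwall exponent, and $x\le x_1\le\sqrt{\eta_2}$.
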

\begin{proof}
We start by observing 
that the function $\psi(t) = z_0(t) - z(t)$ is 
the unique solution of the initial value problem
\begin{equation}
\left\{
\begin{aligned}
\psi''(t) + \lambda^2 q(t) \psi(t) &=  \lambda^2 f(t)
\ \ \ \mbox{for all}\ \  0 \leq t \leq 1\\
\psi(0) = \psi'(0) &= 0,
\end{aligned}
\right.
\label{backwards:lemma2:1}
\end{equation}
where
\begin{equation}
f(t) = 
 \left(q(t) -q _0(t)\right) \left(\psi(t) + z(t)\right).
\label{backwards:lemma2:2}
\end{equation}
We now apply the well-known Liouville-Green transformation 
to (\ref{backwards:lemma2:1})  in two steps.  
First, we introduce the function
\begin{equation}
\phi(t) = \left(q(t)\right)^{1/4} \psi(t),
\label{backwards:lemma2:2h}
\end{equation}
which is the solution of the initial value problem
\begin{equation}
\left\{
\begin{aligned}
\phi''(t) - \frac{q'(t)}{2q(t)} \phi'(t) + \lambda^2 q(t) \phi(t) 
&= \left(q(t)\right)^{1/4} \lambda^2 f(t) -\frac{1}{4} q(t) p(t)\phi(t)
\ \ \ \mbox{for all}\ \ 0 \leq t \leq 1\\
\phi(0) = \phi'(0) &= 0.  \\
\end{aligned}
\right.
\label{backwards:lemma2:3}
\end{equation}
%
%
Next we introduce the change of variables
\begin{equation}
x(t) = \int_0^t \sqrt{q(u)}\ du,
\label{backwards:lemma2:5}
\end{equation}
which transforms (\ref{backwards:lemma2:3}) into
\begin{equation}
\left\{
\begin{aligned}
\phi''(x) +  \lambda^2 \phi(x) &= \left(q(x)\right)^{-3/4} \lambda^2 f(x) - \frac{1}{4} p(x) \phi(x) 
\ \ \ \mbox{for all}\ \ 0 \leq x \leq x_1 \\
\phi(0) = \phi'(0) &= 0,
\end{aligned}
\right. 
\label{backwards:lemma2:6}
\end{equation}
where 
\begin{equation}
x_1 = \int_0^1 \sqrt{q(u)}\ du.
\label{backwards:lemma2:7}
\end{equation}
We now use the Green's function for the initial value problem (\ref{backwards:lemma2:6})
obtained from the Liouville-Green transform to conclude that for all $0 \leq x \leq x_1$, 
\begin{equation}
\phi(x) = 
\int_0^x \frac{\sin(\lambda(x-y))}{\lambda}
\left(\left(q(y)\right)^{-3/4} \lambda^2 f(y) - \frac{1}{4} p(y) \phi(y) \right)\ dy.
\label{backwards:lemma2:8}
\end{equation}
By inserting (\ref{backwards:lemma2:2}) into (\ref{backwards:lemma2:8}) 
and we obtain the inequality
\begin{equation}
\left|\phi(x)\right| \leq
\left(\|q-q_b\|_\infty\frac{\lambda}{\eta_1} + 
\frac{\eta_2}{4}
\right)
\int_0^x \left|\phi(y)\right|\ dy
+
\frac{\lambda}{\eta_1^{3/4}}\|q-q_b\|_\infty \|z\|_\infty
\ \ \ \mbox{for all}\ \ 0 \leq x \leq x_1.
\label{backwards:lemma2:9}
\end{equation}
We  apply Gronwall's inequality
(Theorem~\ref{preliminaries:theorem:gronwall} in Section~\ref{preliminaries:section:gronwall})
to (\ref{backwards:lemma2:9}) in order to conclude that
\begin{equation}
\left|\phi(x)\right| \leq
\frac{\lambda}{\eta_1^{3/4}} \|q-q_b\|_\infty 
\|z\|_\infty
\exp\left(
\left(\|q-q_b\|_\infty\frac{\lambda}{\eta_1} + 
\frac{\eta_2}{4}\right)x
\right)
\ \ \ \mbox{for all}\ \ 0 \leq x \leq x_1.
\label{backwards:lemma2:11}
\end{equation}
Combing (\ref{backwards:lemma2:2h}), (\ref{backwards:lemma2:11}) and the observation that 
\begin{equation}
x_1 = \int_0^1 \sqrt{q(u)}\ du \leq \eta_2^{1/2}
\label{backwards:lemma2:12}
\end{equation}
yields the inequality
\begin{equation}
\left|\phi(t)\right| 
\leq  
\frac{\lambda}{\eta_1}
\|q-q_b\|_\infty 
\exp\left(\frac{\lambda}{\eta_1} \|q-q_b\|_\infty \eta_2^{1/2}\right)
\exp\left(\frac{\eta_2^{3/4}}{4}\right)
\|z\|_\infty
\label{backwards:lemma2:13}
\end{equation}
for all $0 \leq t \leq 1$.  Now 
\begin{equation}
\frac{\lambda}{\eta_1}
\|q-q_b\|_\infty 
\exp\left(\frac{\lambda}{\eta_1} \|q-q_b\|_\infty \eta_2^{1/2}\right)
\exp\left(\frac{\eta_2^{3/4}}{4}\right)
< \epsilon
\label{backwards:lemma2:14}
\end{equation}
if and only if
\begin{equation}
\frac{\lambda}{\eta_1}
\eta_2^{1/2}\|q-q_b\|_\infty 
< W_0\left(\epsilon\eta_2^{1/2}
\exp\left(-\frac{\eta_2^{3/4}}{4}\right)\right),
\label{backwards:lemma2:15}
\end{equation}
where $W_0$ is the branch of the Lambert $W$ function which
is real-valued and greater than $-1$ on the interval $[-1/e,\infty)$
(see Section~\ref{section:preliminaries:LambertW}).
According to Theorem~\ref{preliminaries:LambertW:theorem3},
\begin{equation}
\frac{\lambda}{\eta_1}
\eta_2^{1/2}\|q-q_b\|_\infty 
< \frac{1}{2}\left(\epsilon\eta_2^{1/2}
\exp\left(-\frac{\eta_2^{3/4}}{4}\right)\right),
\label{backwards:lemma2:16}
\end{equation}
implies (\ref{backwards:lemma2:15}).  We algebraically simplify 
(\ref{backwards:lemma2:16}) in order to conclude that 
\begin{equation}
\|q-q_b\|_\infty 
< 
\frac{1}{2}
\frac{\eta_1 }{\lambda}
\exp\left(-\frac{\eta_2^{3/4}}{4}\right)
\epsilon
\label{backwards:lemma2:17}
\end{equation}
implies (\ref{backwards:lemma2:13}).
\end{proof}

By combining Lemmas~\ref{backwards:lemma1} and \ref{backwards:lemma2}
we obtain Theorem~\ref{overview:backwards}.

\label{section:backwards}
\end{section}

\begin{section}{Numerical experiments}

In this section, we describe numerical experiments which, {\it inter alia}, illustrate 
one of  the important consequences of the existence of nonoscillatory phase functions.  
Namely, that a large class of special functions can be evaluated to high accuracy
using a number of operations which does not grow with order.

Although the proof of Theorem~\ref{main_theorem} suggests a numerical
procedure for the construction of nonoscillatory phase functions, we utilize
a different procedure here.  It has the advantage that
the coefficient $q$ in the ordinary differential equation 
(\ref{introduction:original_equation}) need not be extended outside of the interval
on which the nonoscillatory phase function is constructed.  A paper describing
this work is in preparation.

The code we used for these calculations was written in Fortran and
compiled with the Intel Fortran Compiler version 12.1.3.
All calculations were carried out in double precision arithmetic
on a desktop computer equipped with an Intel Xeon X5690 CPU
running at 3.47 GHz.

\begin{subsection}{A nonoscillatory solution of the logarithm form of Kummer's equation.}
In this experiment, we illustrate Theorem~\ref{main_theorem}
in Section~\ref{section:overview}.  We first construct a nonoscillatory solution $r$ of the logarithm form of
Kummer's equation
\begin{equation}
r''(t) -\frac{1}{4}\left(r'(t)\right)^2 + 4\lambda^2 \left(\exp(r(t))-q(t)\right) = 0
\end{equation}
on the interval $[-1,1]$,  where $\lambda$ = 1,000 and $q$  
is the function $[-1,1]\to\mathbb{R}$ defined by the formula
\begin{equation}
q(t) = \left(3 + \frac{1}{1+10t^2} + t^3\cos(5t) \right).
\label{numerics:q}
\end{equation}
Then we compute the 500 leading Chebyshev coefficients of 
$q$ and $r$.

We display the results of this experiment in Figures~\ref{numerics:simple:figure1}
and \ref{numerics:simple:figure2}.
Figure~\ref{numerics:simple:figure1} 
contains plots of the functions $q$ and $r$, while Figure~\ref{numerics:simple:figure2}
contains a plot of the base-$10$ logarithms of the
absolute values of the leading Chebyshev coefficients of $q$ and $r$.   

We observe that, consistent with Theorem~\ref{main_theorem},
the Chebyshev coefficients of both $r$ and $q$ decay exponentially,
although those of $r$ decay at a slightly slower rate.
\label{section:numerics:simple}

\end{subsection}

\begin{subsection}{Evaluation of Legendre polynomials.}
In this experiment, we compare the cost of evaluating Legendre polynomials
of large order using the standard recurrence relation with the cost
of doing so with a nonoscillatory phase function.

For any integer $n \geq 0$, the Legendre polynomial $P_n(x)$ of order $n$ is a solution
of the second order differential equation
\begin{equation}
(1-t^2) y''(t) - 2t y'(t) + n(n+1) y(t) = 0.
\label{numerics:legendre:1}
\end{equation}
Equation (\ref{numerics:legendre:1}) can be put into the standard form
\begin{equation}
\psi''(t) + \left(\frac{1+n-nt^2-n^2(t^2-1)}{(1-t^2)^2}\right)  \psi(t) = 0
\label{numerics:legendre:2}
\end{equation}
by introducing the transformation
\begin{equation}
\psi(t) = \sqrt{1-t^2}\ y(t).
\label{numerics:legendre:3}
\end{equation}
Legendre polynomials satisfy the well-known three term recurrence relation
\begin{equation}
(n+1) P_{n+1}(t) = (2n+1) t P_n(t) - n P_{n-1}(t).
\label{numerics:legendre:5}
\end{equation}
See, for instance, \cite{NISTHandbook} for a discussion of the these
and other properties of Legendre polynomials.

For each of $9$ values of $n$, we proceed as follows.
We  sample $1000$ random points 
\begin{equation}
t_1,t_2,\ldots,t_{1000}
\end{equation}
from the uniform distribution on the interval $(-1,1)$.  
Then we evaluate the Legendre polynomial
of order $n$ using the recurrence relation (\ref{numerics:legendre:5})
at each of the points $t_1,t_2,\ldots,t_{1000}$.  Next, we construct a nonoscillatory phase
function for the ordinary differential equation
(\ref{numerics:legendre:2})  and use it evaluate the Legendre polynomial
of order $n$ at each of the points $t_1,t_2,\ldots,t_{1000}$.  Finally, for each
integer $j=1,\ldots,1000$, we compute the error in the approximation
of $P_n(t_j)$ obtained from the nonoscillatory phase function by comparing it to
the value obtained using the recurrence relation (we regard the recurrence relation
as giving the more accurate approximation).
 
The results of this experiment are shown in Table~\ref{numerics:legendre:table1}.
There, each row correponds to value of $n$.
That value is listed $n$, as is the  time required to compute each phase function for
that value of $n$,  the average time required to
evaluate the Legendre polynomial of order $n$ using the recurrence relation,
the average cost of evaluating the Legendre polynomial of order $n$
with the nonoscillatory phase function,
and the largest of the absolute errors in the approximations of
the quantities  $$P_n(t_1), P_n(t_2), \ldots,P_n(t_{1000})$$ 
obtained via the phase function method.

This experiment reveals that, as expected, 
the cost of evaluating $P_n(t)$ using the recurrence relation
(\ref{numerics:legendre:5})  grows as $O(n)$ while the cost of
doing so with  nonoscillatory phase function is independet of
order.

However, it also exposes a limitation of phase functions.
The values of $P_n(t)$ are obtained in part by evaluating sine and cosine of a 
phase function whose magnitude is on the order of $n$.  This imposes limitations on 
the accuracy of the method due to the well-known difficulties in  evaluating periodic 
functions of large arguments.

Figure~\ref{numerics:legendre:figure1} contains a plot of the nonoscillatory phase
function for the equation (\ref{numerics:legendre:2}) when $n=$1,000,000.

\label{section:numerics:legendre}
\end{subsection}


\begin{subsection}{Evaluation of Bessel functions.}

In this experiment, we compare the cost of evaluating Bessel functions
of integer order via the standard recurrence relation with that
of doing so using a nonoscillatory phase function.

We will denote by $J_\nu$ the Bessel function of the first kind
of order $\nu$.  It is a solution of the second order
differential equation
\begin{equation}
t^2 y''(t) + t y'(t)  + (t^2-\nu^2) y(t) = 0,
\label{numerics:bessel:1}
\end{equation}
which can be brought into the standard form
\begin{equation}
\psi''(t) + \left(1-\frac{\lambda^2-1/4}{t^2} \right)\psi(t) = 0
\label{numerics:bessel:2}
\end{equation}
via the transformation
\begin{equation}
\psi(t) = \sqrt{t}\ y(t).
\label{numerics:bessel:3}
\end{equation}
An inspection of (\ref{numerics:bessel:2})  reveals that
$J_\nu$ is nonoscillatory on the interval
\begin{equation}
\left(0,\frac{1}{2}\sqrt{4\nu^2-1}\right)
\end{equation}
and oscillatory on the interval
\begin{equation}
\left(\frac{1}{2}\sqrt{4\nu^2-1},\infty\right).
\label{numerics:bessel:4}
\end{equation}
The Bessel functions satisfy the three-term recurrence relation
\begin{equation}
J_{\nu+1}(t) = \frac{2\nu}{t} J_\nu(t) - J_{\nu-1}(t).
\label{numerics:bessel:5}
\end{equation}
The recurrence (\ref{numerics:bessel:5}) is numerically unstable in the forward
direction; however, when evaluated in the direction of decreasing
index, it yields a stable mechanism for evaluating Bessel functions
of integer order (see, for instance, Chapter 3 of \cite{NISTHandbook}).

For each of $9$ values of $n$, we proceed as follows.
First, we  sample $1000$ random points 
\begin{equation}
t_1,t_2,\ldots,t_{1000}
\end{equation}
from the uniform distribution on the interval $[2n,3n]$.  We then use
the recurrence relation (\ref{numerics:bessel:5}) to evaluate the Bessel function
$J_n$ of order $n$ at the points $t_1,t_2,\ldots,t_{1000}$.  Next, we construct a  nonoscillatory
phase function for the equation (\ref{numerics:bessel:3}) on the interval
$[2n, 3n]$ and use it to evaluate $J_n$ at the points
$t_1,t_2,\ldots,t_{1000}$.  
Finally, for each integer $j=1,\ldots,1000$,
 we compute the error in the approximation of $J_n(t_j)$ obtained 
from the nonoscillatory phase function by comparing it to
the value obtained using the recurrence relation (once again
we regard the recurrence relation
as giving the more accurate approximation).

The results of this experiment are displayed in 
Table~\ref{numerics:bessel:table1}.  There, each row corresponds
to one value of $n$.   In addition to that value of $n$, it lists
the time required to compute the phase function at order $n$,
 the average cost of evaluating $J_n$ using the recurrence relation,
the average cost of evaluating it with the nonoscillatory phase function,
and the largest of the absolute errors in the approximations of
the quantities 
$$J_n(t_1), J_n(t_2), \ldots, J_n(t_{1000})$$ 
obtained via the phase function method.

We observe that while the cost of evaluating $J_n$ using the recurrence
relation (\ref{numerics:bessel:5}) grows as $O(n)$, the time
taken by the nonoscillatory phase function approach scales
as $O(1)$.  We also note that, as in the case of Legendre polynomials,
there is some loss of accuracy with the phase function
method due to the difficulties of evaluating trigonometric
functions of large arguments.

\label{section:numerics:bessel}
\end{subsection}

\label{section:numerics}
\end{section}

\begin{section}{Conclusions}

We have shown that the solutions of a large class of second order differential equations
can be accurately represented using nonoscillatory phase functions.

We have also presented the results of numerical experiments which demonstrate
one of the applications of nonoscillatory phase functions:
the  evaluation of special functions at a cost which is  independent of order.
An efficient algorithm for the evaluation of highly oscillatory special functions 
will be reported at a later date.

A number of open issues and questions related to this work remain.
Most obviously, a further investigation of  the integral equation 
(\ref{inteq:integral_equation}) and the conditions under which it admits an exact
solution is warranted.  Moreover, there are applications of nonoscillatory
phase functions beyond the evaluation of special functions which should be explored.
And, of course, the generalization of these results to higher dimensions is 
of great interest. The authors are vigorously pursuing these avenues of research.

\label{section:conclusion}
\end{section}

\begin{section}{Acknowledgements}
Zhu Heitman was supported in part by the Office of Naval Research
under contracts ONR N00014-10-1-0570 and ONR N00014-11-1-0718.
James Bremer was supported in part by a fellowship from the Alfred P. Sloan
Foundation.  Vladimir Rokhlin was supported in part by 
Office of Naval Research contracts
ONR N00014-10-1-0570 and ONR N00014-11-1-0718, and by the Air Force Office of Scientific
Research under contract AFOSR FA9550-09-1-0241.
\end{section}

\begin{section}{References}
\bibliographystyle{acm}
\bibliography{kummer}
\end{section}

\vfill\eject

\begin{figure}[h!!]
\begin{center}
\includegraphics[width=.8\textwidth]{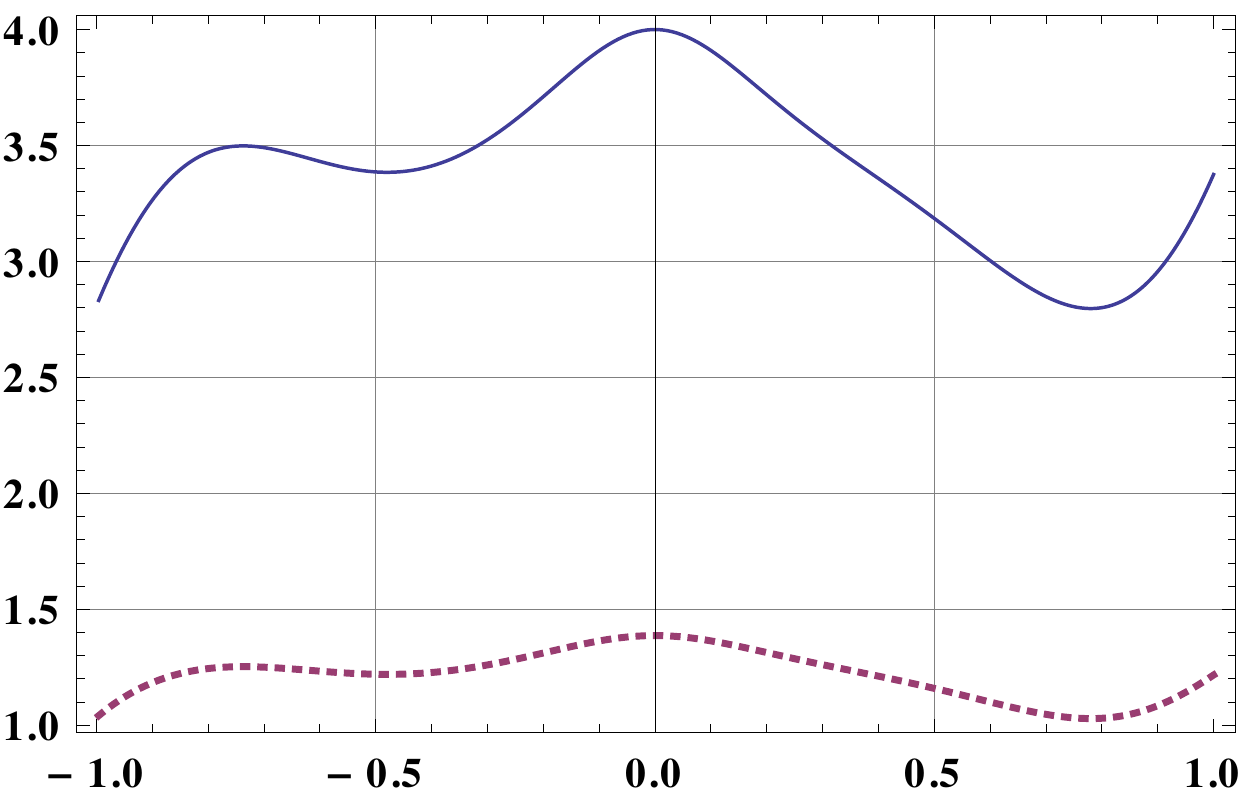}
\caption{The function $q$ 
defined by formula (\ref{numerics:q})
in Section~\ref{section:numerics:simple} (solid line)
and the corresponding solution $r$ of the logarithm form of Kummer's equation
(\ref{inteq:kummer_logarithm_form})
when $\lambda = $ 1,000 (dotted line).
}
\label{numerics:simple:figure1}
\end{center}
\end{figure}
\vfill

\begin{figure}[h!!]
\begin{center}
\includegraphics[width=.8\textwidth]{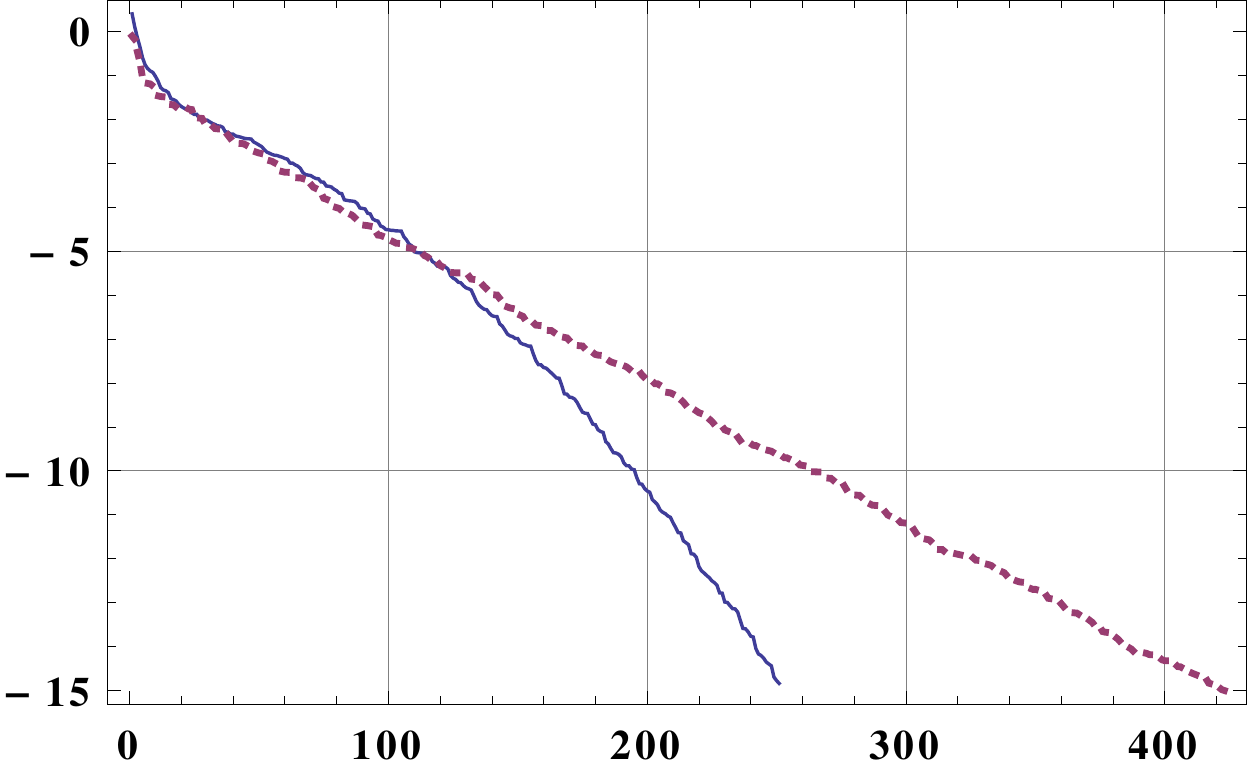}
\caption{The base-$10$ logarithms of the leading Chebyshev coefficients
of the function $q$ 
defined by formula (\ref{numerics:q})
in Section~\ref{section:numerics:simple} (solid line)
and of the associated nonoscillatory
solution $r$ of equation the logarithm form of Kummer's equation 
(\ref{inteq:kummer_logarithm_form})
when $\lambda$ = 1,000
(dotted line).
}
\label{numerics:simple:figure2}
\end{center}
\end{figure}

\vfil\eject
\begin{table}[h!!]
\begin{center}
\begin{tabular}{ccccc}
\multirow{2}{*}{$n$}    &   Phase function & Avg. phase function & Avg. recurrence& Largest \\
& construction time & evaluation time & evaluation time & absolute error\\
\midrule
\addlinespace[.5em]
$10^1$    & 1.55\e{-1} secs     & 1.29\e{-6} secs & 5.82\e{-8} secs & 5.16\e{-14} \\
\addlinespace[.25em]
$10^2$    & 1.76\e{-1} secs     & 1.29\e{-6} secs & 9.73\e{-7} secs & 1.59\e{-13} \\
\addlinespace[.25em]
$10^3$    & 1.57\e{-1} secs     & 1.29\e{-6} secs & 1.03\e{-5} secs & 6.13\e{-13} \\
\addlinespace[.25em]
$10^4$    & 1.55\e{-1} secs     & 1.29\e{-6} secs & 1.04\e{-4} secs & 1.20\e{-12} \\
\addlinespace[.25em]
$10^5$    & 1.56\e{-1} secs     & 1.31\e{-6} secs & 1.04\e{-3} secs & 9.79\e{-12} \\
\addlinespace[.25em]
$10^6$    & 1.58\e{-1} secs     & 1.40\e{-6} secs & 9.81\e{-3} secs & 2.40\e{-11} \\
\addlinespace[.25em]
$10^7$    & 1.65\e{-1} secs     & 1.40\e{-6} secs & 9.69\e{-2} secs & 8.59\e{-11} \\
\addlinespace[.25em]
$10^8$    & 1.87\e{-1} secs     & 1.42\e{-6} secs & 9.68\e{-1} secs & 1.71\e{-10} \\
\addlinespace[.25em]
$10^9$    & 2.05\e{-1} secs    & 1.34\e{-6} secs & 9.68\e{-0} secs & 6.11\e{-10} \\
\end{tabular}
\caption{
{\bf The evaluation of Legendre polynomials}.
A comparison of the time required to evaluate the Legendre polynomial of order
$n$ using the standard recurrence relation and the time necessary to evaluate
it using a nonoscillatory phase function.
The recurrence relation approach scales as $O(n)$ while
the phase function approach scales as $O(1)$.
}
\label{numerics:legendre:table1}
\end{center}
\end{table}

\vfil

\begin{figure}[h!!]
\begin{center}
\includegraphics[width=.8\textwidth]{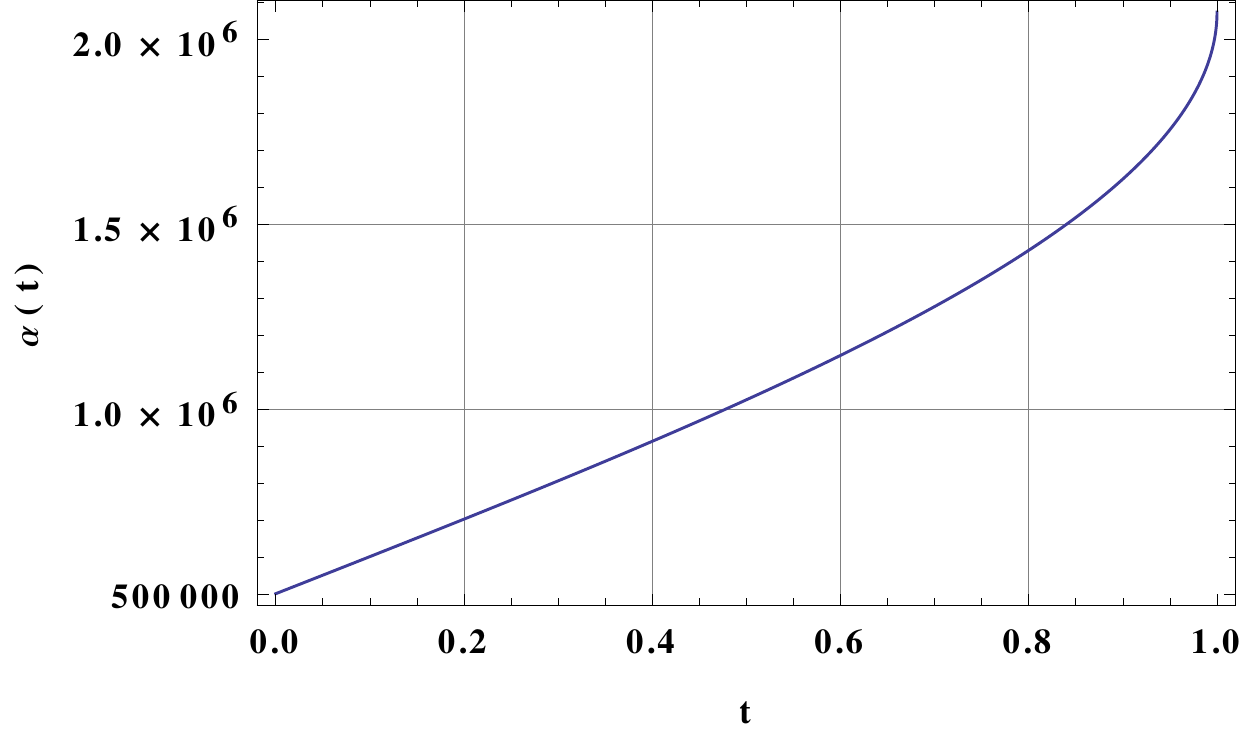}
\caption{
{\bf A phase function for Legendre's differential equation}.
A plot of the nonoscillatory phase function associated with Legendre's equation
(\ref{numerics:legendre:1})
at order  $n =$ 1,000,000.
It is sufficient to construct the phase function on the interval $[0,1)$
due to the symmetry properties of Legendre's differential equation.
}
\label{numerics:legendre:figure1}
\end{center}
\end{figure}

\vfil\eject

\vfil\eject
\begin{table}[h!!]
\begin{center}
\begin{tabular}{ccccc}
\multirow{2}{*}{$n$}
&   Phase function  & Avg. phase function & Avg. recurrence  & Largest \\
& construction time & evaluation time     & evaluation time  & absolute error\\
\midrule
\addlinespace[.5em]
$10^1$    & 5.23\e{-1} secs     & 1.30\e{-6} secs & 1.99\e{-6} secs & 2.81\e{-14} \\
\addlinespace[.25em]
$10^2$    & 5.39\e{-1} secs     & 1.31\e{-6} secs & 7.29\e{-6} secs & 7.85\e{-14} \\
\addlinespace[.25em]
$10^3$    & 5.36\e{-1} secs     & 1.37\e{-6} secs & 4.87\e{-5} secs & 2.40\e{-13} \\
\addlinespace[.25em]
$10^4$    & 5.52\e{-1} secs     & 1.33\e{-6} secs & 4.35\e{-4} secs & 1.01\e{-12} \\
\addlinespace[.25em]
$10^5$    & 5.46\e{-1} secs     & 1.49\e{-6} secs & 4.11\e{-3} secs & 3.18\e{-12} \\
\addlinespace[.25em]
$10^6$    & 5.81\e{-1} secs     & 1.44\e{-6} secs & 4.24\e{-2} secs & 8.57\e{-12} \\
\addlinespace[.25em]
$10^7$    & 6.41\e{-1} secs     & 1.45\e{-6} secs & 4.36\e{-1} secs & 5.98\e{-11} \\
\addlinespace[.25em]
$10^8$    & 7.00\e{-1} secs     & 1.35\e{-6} secs & 4.39\e{+0} secs & 1.14\e{-10} \\
\addlinespace[.25em]
$10^9$    & 1.26\e{+0} secs     & 1.41\e{-6} secs & 4.42\e{+1} secs & 2.43\e{-10} \\
\end{tabular}
\caption{
{\bf The evaluation of Bessel functions}.
A comparison of the time required to evaluate the Bessel function $J_n$
using the standard recurrence relation 
with that required to evaluate it  using a nonoscillatory phase function.
All of the points at which $J_n$ was evaluated were in the interval $[2n,3n]$. 
The recurrence relation approach scales as $O(n)$ in the order $n$ 
while the time required by the phase function method is $O(1)$.
}
\label{numerics:bessel:table1}
\end{center}
\end{table}

\vfill\eject

\end{document}